\let\emph\undefined\newcommand{\emph}[1]{\textsl{#1}}
\newcommand{\spaceplease}{\needspace{5\baselineskip}}
\tikzstyle{tikzfig}=[baseline=-0.25em,scale=0.5]
\tikzstyle{none}=[inner sep=0mm]
\newcommand{\tikzfig}[1]{%
	{\tikzstyle{every picture}=[tikzfig]
		\IfFileExists{#1.tikz}
		{\input{#1.tikz}}
		{%
			\IfFileExists{./figures/#1.tikz}
			{\input{./figures/#1.tikz}}
			{\tikz[baseline=-0.5em]{\node[draw=red,font=\color{red},fill=red!10!white] {\textit{#1}};}}%
	}}%
}
\tikzstyle{every loop}=[]
\newtheoremstyle{mytheorem}
  {\topsep}
  {\topsep}
  {\slshape}
  {0pt}
  {\bfseries}
  {.}
  { }
  {\thmname{#1}\thmnumber{ #2}\thmnote{ {\normalfont\slshape(#3)}}}
  \newtheoremstyle{mydefinition}
    {\topsep}
    {\topsep}
    {\normalfont}
    {0pt}
    {\bfseries}
    {.}
    { }
    {\thmname{#1}\thmnumber{ #2}\thmnote{ {\normalfont\slshape(#3)}}}
\theoremstyle{mytheorem}
\newtheorem{theorem}{Theorem}[section]
\newtheorem*{rep@theorem}{\rep@title}
\newcommand{\newreptheorem}[2]{%
	\newenvironment{rep#1}[1]{%
		\def\rep@title{#2 \ref{##1}} \slshape%
		\begin{rep@theorem}}%
		{\end{rep@theorem}}}
\newtheorem{lemma}[theorem]{Lemma}
\newtheorem{proposition}[theorem]{Proposition}
\newtheorem{corollary}[theorem]{Corollary}
\theoremstyle{mydefinition}
\newtheorem{definition}[theorem]{Definition}
\newenvironment{example}
{\pushQED{\qed}\exx}
{\popQED\endexx}
\newenvironment{remark}
{\pushQED{\qed}\remm}
{\popQED\endremm}
\numberwithin{equation}{section}
\newenvironment{xenumerate}{\begin{enumerate}[topsep=2pt,parsep=2pt,partopsep=2pt,itemsep=0pt,label={\normalfont(\arabic*)}]\itemsep0pt}{\end{enumerate}}
\newenvironment{pnum}{\begin{enumerate}[topsep=2pt,parsep=2pt,partopsep=2pt,itemsep=0pt,label={(\roman{*})}]}{\end{enumerate}}
\DeclareMathSymbol{\Phiit}{\mathalpha}{letters}{"08}\let\Phi\undefined\newcommand{\Phi}{\Phiit}
\DeclareMathSymbol{\Psiit}{\mathalpha}{letters}{"09}\let\Psi\undefined\newcommand{\Psi}{\Psiit}
\DeclareMathSymbol{\Sigmait}{\mathalpha}{letters}{"06}\let\Sigma\undefined\newcommand{\Sigma}{\Sigmait}
\DeclareMathSymbol{\Xiit}{\mathalpha}{letters}{"04}
\DeclareMathSymbol{\Lambdait}{\mathalpha}{letters}{"03}\let\Lambda\undefined\newcommand{\Lambda}{\Lambdait}
\DeclareMathSymbol{\Piit}{\mathalpha}{letters}{"05}\let\Pi\undefined\newcommand{\Pi}{\Piit}
\DeclareMathSymbol{\Gammait}{\mathalpha}{letters}{"00}\let\Gamma\undefined\newcommand{\Gamma}{\Gammait}
\DeclareMathSymbol{\Omegait}{\mathalpha}{letters}{"0A}\let\Omega\undefined\newcommand{\Omega}{\Omegait}
\DeclareMathSymbol{\Upsilonit}{\mathalpha}{letters}{"07}\let\Upsilon\undefined\newcommand{\Upsilon}{\Upilonit}
\DeclareMathSymbol{\Thetait}{\mathalpha}{letters}{"02}\let\Theta\undefined\newcommand{\Theta}{\Thetait}
\def\Hom{\mathrm{Hom}}
\def\id{\mathrm{id}}
\def\SL{\operatorname{SL}}
\let\to\undefined\newcommand{\to}{\longrightarrow}
\let\mapsto\undefined\newcommand{\mapsto}{\longmapsto}
\newcommand{\catf}[1]{\mathsf{#1}}
\newcommand{\Proj}{\operatorname{\catf{Proj}}}
\newcommand{\Mod}{\catf{Mod}}
\newcommand{\Map}{\catf{Map}}
\def\op{\mathrm{op}}
\newcommand{\la}[1]{\xleftarrow{\   #1    \ }}
\newcommand{\ra}[1]{\xrightarrow{\   #1    \ }}
\def\Ch{\catf{Ch}_k}
\def\Grpd{\catf{Grpd}}
\def\Cat{\catf{Cat}}
\let\F\undefined
\let\S\undefined
\newcommand{\F}{\text{F}}
\newcommand{\S}{\text{S}}
\newcommand{\B}{\text{B}}
\newcommand{\Z}{\text{Z}}
\newcommand{\barF}{\bar{\text{F}}}
\newcommand{\barS}{\bar{\text{S}}}
\newcommand{\mfc}{\mathfrak{F}_{\cat{C}}}
\newcommand{\mfg}{\mathfrak{F}_{\Mod_k D(G)}}
\newcommand{\mfcm}{\mathfrak{F}_{\cat{C}}^{\catf{m}}}
\newcommand{\hocolim}{\operatorname{hocolim}}
\newcommand{\hocolimsub}[1]{\underset{#1}{\operatorname{hocolim}}\,}
\def\PBun{\catf{PBun}}
\newcommand{\K}{\catf{K}}
\newcommand{\Vect}{\catf{Vect}}
\newcommand{\block}{\catf{B}}
\newcommand{\ordblock}{\catf{b}}
\newcommand{\cof}{\mathsf{Q}}
\newcommand{\im}{\operatorname{i}}
\newcommand{\cat}[1]{\mathcal{#1}}
\newcommand{\Ext}{\operatorname{Ext}}
\newcommand{\flint}{\int_{\text{\normalfont f}\mathbb{L}}}
\newcommand{\Surf}{\catf{Surf}}
\newcommand{\mSurf}{\catf{mSurf}}
\let\C\undefined\newcommand{\C}{\catf{C}}\newcommand{\Chat}{\widehat{\catf{C}}}
\newcommand{\m}{\catf{m}}
\newcommand{\Surfc}{\catf{Surf}^\catf{c}}
\newcommand{\DW}{\mathsf{Z}_G^\mathbb{L}}
\def\cut{\operatorname{cut}}
\newcommand{\FM}{\catf{M}}
\newcommand{\M}{\catf{M}}
\newcommand{\CMhat}{\cat{C}\text{-}\!\!\int \Mhat}
\newcommand{\CMC}{\cat{C}\text{-}\!\!\int \M^\C}
\newcommand{\CM}{\cat{C}\text{-}\!\!\int \M}
\newcommand{\FMhat}{\widehat{\catf{M}}}
\newcommand{\Mhat}{\widehat{\catf{M}}}
\newcommand{\lint}{\int_\mathbb{L}}
\newcommand{\Aut}{\operatorname{Aut}}
\definecolor{Blue}  {rgb} {0.282352,0.239215,0.803921}
\definecolor{Green} {rgb} {0.133333,0.545098,0.133333}
\definecolor{Red}   {rgb} {0.803921,0.000000,0.000000}
\definecolor{Violet}{rgb} {0.580392,0.000000,0.827450}
\newcommand{\myskip}{\\[0.5ex]}
\newtheorem*{theorem*}{Theorem}
\newtheorem*{corollary*}{Corollary}
\begin{document}
\vspace*{-10mm}
	\begin{flushright}
		\small
		{\sffamily [ZMP-HH/20-12]} \\
		\textsf{Hamburger Beiträge zur Mathematik Nr.~835}\\
		\textsf{April 2020}
	\end{flushright}
	
\vspace{8mm}
	
	\begin{center}
		\textbf{\large{Homotopy Coherent Mapping Class Group Actions and Excision  \\[0.5ex] \large for Hochschild Complexes of Modular Categories}}\\
		\vspace{6mm}
		{\large Christoph Schweigert and  Lukas Woike }
		
		\vspace{3mm}
		
	\normalsize
		{\slshape Fachbereich Mathematik\\ Universit\"at Hamburg\\
			Bereich Algebra und Zahlentheorie\\
			Bundesstra\ss e 55\\  D\,--\,20\,146\, Hamburg }
	\end{center}
\vspace*{1mm}
	\begin{abstract}\noindent 
		Given any modular category $\mathcal{C}$ over an algebraically closed field $k$, we extract a sequence $(M_g)_{g\geq 0}$ of $\mathcal{C}$-bimodules and show that the Hochschild chain complex $CH(\mathcal{C};M_g)$ of $\mathcal{C}$ with coefficients in $M_g$ carries a canonical homotopy coherent projective action of the mapping class group of the surface of genus $g+1$. The ordinary  Hochschild complex of $\mathcal{C}$  corresponds to $CH(\mathcal{C};M_0)$.
		
		This result is obtained as part of the following more comprehensive topological structure: We construct a symmetric monoidal functor $\mathfrak{F}_{\mathcal{C}}:\mathcal{C}\text{-}\mathsf{Surf}^{\mathsf{c}}\to\mathsf{Ch}_k$ with values in chain complexes over $k$ defined on a symmetric monoidal category of surfaces whose boundary components are labeled with projective objects in $\mathcal{C}$. The functor $\mathfrak{F}_{\mathcal{C}}$ satisfies an excision property which is formulated in terms of homotopy coends. In this sense, any modular category  gives naturally rise to a modular functor with values in chain complexes. In zeroth homology, it recovers Lyubashenko's mapping class group representations.
		
		The chain complexes in our construction are explicitly computable by choosing a marking on the surface, i.e.\ a cut system and a certain embedded graph. For our proof, we replace the connected and simply connected groupoid of cut systems that appears in the Lego-Teichmüller game by a contractible Kan complex.
\end{abstract}

\tableofcontents
\normalsize
\newpage

\section{Introduction and summary}
It is an important insight of the last decades that the deep connections between low-dimensional topology and representation theory
can be profitably used in both directions.

Given a certain type of representation category (typically a monoidal category with plenty of additional structure and often subject to finiteness conditions), one can construct topological invariants: Via a surgery construction, 
 a semisimple modular category gives rise to the Reshetikhin-Turaev invariants \cite{rt1,rt2,turaev}.
 These include the Turaev-Viro invariants \cite{turaevviro}
 that can be obtained from
 a spherical fusion category
 via a state sum construction. 
 These constructions actually extend to three-dimensional topological field theories in the sense of \cite{atiyah}. 
 
By a change of perspective, such constructions can be read backwards in the sense that low-dimensional topology can be used to construct meaningful algebraic quantities from a representation category (or a related algebraic object). As a second and often even more important step, one will then use topology to establish properties of these algebraic quantities. In a lot of cases, such topological manipulations
are not only more conceptual, but also turn out to be easier than purely algebraic manipulations.\myskip

In this article, we present such a topological perspective on modular categories --- with a special emphasis on the non-semisimple case.
On the one hand, it has been known
for more than 25 years that one can construct from a not necessarily semisimple modular category a system of projective mapping class group representations  \cite{lubacmp,luba,lubalex} that can be used to build a \emph{modular functor} with values in vector spaces.
On the other hand, it is clear that non-semisimplicity will result in  a non-trivial homological algebra, an important aspect that from a Hopf algebraic perspective appears e.g.\ in \cite{gk,schau,bichon}. 
In this paper, we unravel within a homotopy coherent framework the interplay of 
the homological algebra of a modular category and low-dimensional topology.
This leads to homotopy coherent projective mapping class group actions and excision
results for certain Hochschild complexes of a modular category. Our methods will allow us to systematically trace
back this structure to a clear topological origin.
\myskip

Let us first recall the notion of a modular category: A \emph{finite category}  is a linear Abelian category (over a fixed field $k$ that we will assume to be algebraically closed throughout this article)
with finite-dimensional morphism spaces, enough projectives, finitely many isomorphism classes of simple objects such that every object has finite length. 
A \emph{finite tensor category} \cite{etinghofostrik} is a tensor category (linear Abelian rigid monoidal category with simple unit) whose underlying linear category is a finite category. A finite tensor category that is also equipped with a braiding and a ribbon structure, is called a \emph{finite ribbon category}.
For a braided finite tensor category $\cat{C}$ with braiding $c$, one defines the \emph{Müger center}
as the full subcategory of $\cat{C}$ spanned by all transparent objects, i.e.\ all objects $X\in\cat{C}$  satisfying $c_{Y,X}c_{X,Y}=\id_{X\otimes Y}$ for every $Y\in\cat{C}$.
The braiding (and then also the braided finite tensor category) is called \emph{non-degenerate} if its Müger center is trivial, i.e.\ spanned by the monoidal unit under finite direct sums.
 A \emph{modular category} is a non-degenerate finite ribbon category.
Modular categories appear as categories of modules over certain Hopf algebras \cite{turaev,egno}, vertex operator algebras \cite{huang} or nets of observable algebras \cite{klm}, see in particular \cite{lo,glo,cgr}   for the non-semisimple case. 
Recall that a modular category (more generally a finite tensor category) is  semisimple if and only if all of its objects are projective.

For a treatment of modular categories in topological terms,
non-semisimplicity is a major challenge: From a semisimple modular category, a once-extended three-dimensional oriented topological field theory can be built via the 
Reshetikhin-Turaev construction \cite{rt1,rt2,turaev}. In the non-semisimple case, such a construction is not available. In fact, once-extended three-dimensional oriented topological field theories are equivalent to semisimple modular categories \cite{BDSPV15} by evaluation on the circle 
(interestingly enough, if one changes the bordism category to the extent that it loses rigidity, some constructions are still possible \cite{gai}).

For this reason, we will work throughout this article with a different kind of topological structure that comprises slightly less
than the notion of a once-extended oriented three-dimensional topological field theory: the notion of \emph{modular functor} \cite{tillmann,baki}, or rather a suitable version thereof. 
Roughly, a modular functor is a consistent system of (projective) mapping class group representations. These are classically valued in vector spaces, but in order to capture the homological algebra of a modular category, we will consider a differential graded version. 

Let us discuss the definition of a modular functor in more detail:
An extended surface $\Sigma$ is a compact oriented 
two-dimensional smooth manifold (possibly with boundary) with the choice of a point on each boundary component and an orientation on each boundary component (which may either agree or disagree with the orientation induced by the surface making this boundary component either \emph{outgoing} or \emph{incoming}).
For a set $\mathfrak{X}$ (to be thought of as label set), we 
define the category $\mathfrak{X}\text{-}\Surfc$ whose objects are extended surfaces with an element in $\mathfrak{X}$ for each boundary component
and whose morphisms are generated by mapping classes and sewings; the superscript $\textsf{c}$ indicates that some relations between mapping classes will just be satisfied up to an additional central generator (this is to allow for (a certain type of) projective actions).
Disjoint union endows $\mathfrak{X}\text{-}\Surfc$ with a symmetric monoidal structure. We refer to Section~\ref{secsurfacecat}
 for the detailed definition of this surface category.

A \emph{modular functor (with values in chain complexes over $k$)} 
is defined as a symmetric monoidal functor $\mathfrak{X}\text{-}\Surfc \to \Ch$ from $\mathfrak{X}\text{-}\Surfc$ to the category of chain complexes over $k$ satisfying an excision property formulated in terms of homotopy coends (Definition~\ref{predefdmf}).
We refer to the values of a modular functor as \emph{conformal blocks}.
The notion of symmetric monoidal functor may of course be relaxed from a strict version to a homotopy coherent version by considering instead of $\mathfrak{X}\text{-}\Surfc$ a suitable resolution.

While considering  modular functors 
with values in chain complexes as a generalization of the classical notion is certainly natural, it is not clear that a non-trivial class of examples exists.
The goal of this article is to prove that modular categories produce such a non-trivial class of examples of differential graded modular functors.
This class of examples will lead to concrete applications to Hochschild complexes of modular categories. \myskip

Let us state the main \emph{topological}
result: To this end, we fix a modular category $\cat{C}$ and consider the surface category (as defined above) for the label set $(\Proj \cat{C})_0$, the set of projective objects of $\cat{C}$. We denote this surface category by $\cat{C}\text{-}\Surfc$.
The projectivity assumption for boundary labels is not essential and is used here to simplify the presentation, see Remark~\ref{remproj}.

\begin{reptheorem}{thmmain}[Main Theorem, Part I]
	Any modular category $\cat{C}$ 
	gives rise in a canonical way to a  modular functor 
	\begin{align}
	\mfc\ :\ \cat{C}\text{-}\Surfc\to\Ch  \label{eqnmfcintro}
	\end{align}
	with values in chain complexes.
\end{reptheorem}
The specific model for $\mfc$ that we provide  will actually be strictly functorial in $\cat{C}\text{-}\Surfc$; in particular, the resulting projective mapping class group actions are strict.
However, below we will transfer these actions along  equivalences (i.e.\ quasi-isomorphisms, see also our conventions on page~\pageref{pageconventions}) to certain Hochschild complexes leading to non-strict actions.

The category $\cat{C}$ (or rather its subcategory $\Proj \cat{C}$) enters the definition of $\cat{C}\text{-}\Surfc$ just through its object \emph{set}, but it is actually recovered as a \emph{linear category} by evaluation of \eqref{eqnmfcintro} on the cylinder; for more details we refer to Section~\ref{secdefdmf}. 

As an example, we explicitly describe the modular functor for modules over the Drinfeld double of a finite group $G$ in positive characteristic as chains on groupoids of $G$-bundles over surfaces (Example~\ref{exdouble}).\myskip

The second part of the main result is concerned with the concrete computation of the  modular functor $\mfc$ on a given extended surface $\Sigma$ with projective boundary label $\underline{X}$ (of course, $\Sigma$ can be closed and hence $\underline{X}$ the empty collection):  We  choose
an auxiliary datum, namely a marking $\Gamma$ on $\Sigma$ (roughly: a cut system and an embedded graph).
By a  prescription using the combinatorial data provided by the marking, the morphism spaces of $\cat{C}$ and homotopy coends
we  define a chain complex $\block_\cat{C}^{\Sigma,\Gamma}(\underline{X})$, the so-called \emph{marked block}
for   $(\Sigma,\Gamma,\underline{X})$; see Section~\ref{derivedconformalblockssec} for details.
For the example of the closed torus and a sufficiently simple marking, this complex is given by the (normalized) chains on the simplicial vector space
\begin{center}
	{\footnotesize
		\begin{equation}
		\begin{tikzcd}
		\dots \ar[r, shift left=6]  \ar[r, shift left=2]
		\ar[r, shift right=6]  \ar[r, shift right=2]
		& 	\displaystyle \bigoplus_{  \substack{X_0,X_1 ,X_2 \\ \in \Proj\cat{C}}}  \cat{C}(X_1,X_0)\otimes\cat{C}(X_2,X_1)\otimes  \cat{C}(X_0,X_2)
		\ar[l, shift left=4]  \ar[l]
		\ar[l, shift right=4]  
		\ar[r, shift left=4] \ar[r, shift right=4] \ar[r] & \displaystyle \bigoplus_{X_0,X_1 \in \Proj\cat{C}}  \cat{C}(X_1,X_0)\otimes \cat{C}(X_0,X_1) \ar[r, shift left=2] \ar[r, shift right=2]
		\ar[l, shift left=2] \ar[l, shift right=2]
		& \displaystyle \bigoplus_{X_0 \in \Proj \cat{C}} \cat{C}(X_0,X_0)\ ,  \ar[l] \\
		\end{tikzcd}\label{eqnhochschildgenus1}
		\end{equation}}
\end{center}
\normalsize
where $\cat{C}(X,Y)$ is the space of morphisms from $X$ to $Y$,  and the face and degeneracy maps are given by composition in $\cat{C}$ and insertion of identities, respectively, see Example~\ref{exhochschild}.
Hence, it is given by the Hochschild complex   \cite{mcarthy,keller} for the category of projective objects in $\cat{C}$.

\begin{reptheorem}{thmmain}[Main Theorem, Part II]
	After any choice of marking $\Gamma$ for an extended surface $\Sigma$ with projective boundary label $\underline{X}$, there is a canonical equivalence
	\begin{align} \block_\cat{C}^{\Sigma,\Gamma}(\underline{X})\ra{\simeq} \mfc(\Sigma,\underline{X}) \ . \label{eqnequivblock}
	\end{align}
\end{reptheorem}
Note that this equivalence is canonical \emph{after} the choice of the marking; the marking itself is not canonical.\myskip

The modular functor $\mfc:\cat{C}\text{-}\Surfc \to \Ch$ relates 
to classical constructions of modular functors with values in vector spaces:
\begin{itemize}
	\item 
	\emph{Reshetikhin-Turaev construction}. As mentioned above, from a \emph{semisimple} modular category, one can build a once-extended  three-dimension\-al oriented topological field theory 
	via the Reshetikhin-Turaev construction \cite{rt1,rt2,turaev}. It is not surprising that
	the differential graded modular functor $\mfc : \cat{C}\text{-}\Surfc \to \Ch$ for a semisimple modular category does not add anything to the picture: It has non-trivial homology only in degree zero and recovers in zeroth homology the modular functor obtained by restriction of the Reshetikhin-Turaev topological field theory to surfaces.

	\item \emph{Lyubashenko construction.}
	The classification result from \cite{BDSPV15} tells us that from a non-semisimple modular category, we cannot obtain a once-extended oriented three-dimensional topological field theory.
	However, by a remarkable result of Lyubashenko \cite{lubacmp,luba,lubalex} any  modular  category (not necessarily semisimple) still gives rise to a mapping class group representations (in the semisimple case, they agree with  the ones obtained from the Reshetikhin-Turaev construction). A key ingredient for the construction of these representations is the canonical coend $\mathbb{F}=\int^{X\in\cat{C}} X \otimes X^\vee\in\cat{C}$ that is also referred to as \emph{Lyubashenko coend}.
	The   modular functor $\mfc:\cat{C}\text{-}\Surfc \to \Ch$ will recover in zeroth homology
	the linear dual of Lyubashenko's mapping class group representations. However, the  modular functor $\mfc$ will generally have non-trivial higher homologies.
\end{itemize}

We can now provide a topological perspective on Hochschild complexes of a modular category $\cat{C}$ with coefficients in specific bimodules:
For a modular category $\cat{C}$ and $g\ge 0$, 
the evaluation of the modular functor $\mfc$ on a surface  of genus $g$ and with two oppositely oriented boundary components yields a bimodule, i.e.\ a functor 
$M_g : \cat{C}^\op\otimes\cat{C}\to\Ch$.
Up to equivalence, $M_g$ is concentrated in degree zero and given  by $M_g(X,Y)=\cat{C}(X,Y\otimes\mathbb{F}^{\otimes g})$ for $X,Y\in\cat{C}$, where $\cat{C}(-,-)$ denotes the morphism spaces of $\cat{C}$ and $\mathbb{F}=\int^{X\in\cat{C}} X \otimes X^\vee\in\cat{C}$ the canonical coend. We recall in Section~\ref{secbimodules} the definition of the Hochschild chains $CH(\cat{C};M_g)$ of $\cat{C}$ with coefficients in $M_g$ and prove:

\begin{reptheorem}{cormain2}
	For any modular category $\cat{C}$  and $g\ge 0$, the Hochschild chains $CH(\cat{C};M_g)$ with coefficients in the bimodule $M_g$  carry a canonical homotopy coherent projective action of the mapping class group $\Map(\Sigma_{g+1})$ of the closed surface of genus $g+1$.
\end{reptheorem}

The complex $CH(\cat{C};M_0)$ is the `ordinary' Hochschild complex \eqref{eqnhochschildgenus1}, and the homotopy coherent projective action of $\Map(\Sigma_1)=\SL(2,\mathbb{Z})$ was already established in \cite{dva}, see \cite{svea} for a Hopf algebraic analogue of this result on (co)homology level.
The projective mapping class group actions induced on the homologies $H_*(CH(\cat{C};M_g))$ can be related to the projective mapping class group actions on certain Ext groups in \cite{svea2} (Remark~\ref{remsvea}).

Our main result provides the following topological proof  for Theorem~\ref{cormain2}: Using the excision property for marked blocks, we observe that $CH(\cat{C};M_g)$ can be seen as the marked block for $\Sigma_{g+1}$ \emph{and a specific marking}. This makes the Hochschild complexes $CH(\cat{C};M_g)$ canonically equivalent to the  conformal block $\mfc(\Sigma_{g+1})$ thanks to \eqref{eqnequivblock}. 
The  conformal block $\mfc(\Sigma_{g+1})$ carries even a strict projective action of $\Map(\Sigma_{g+1})$. As a consequence, $CH(\cat{C};M_g)$ carries also a projective $\Map(\Sigma_{g+1})$-action through transfer which, in general, will just be homotopy coherent. 
Note that constructing directly a homotopy coherent action on the Hochschild complex $CH(\cat{C};M_g)$, i.e.\
without using the relation to $\mfc(\Sigma_{g+1})$,
would be rather involved (as the treatment of $CH(\cat{C};M_0)$ in \cite{dva} shows). 
The reason for this difficulty is clear: From a topological perspective, the complex $CH(\cat{C};M_g)$ corresponds to a specific marking, and the action of the mapping class group will not preserve this marking! Therefore, it is easier to obtain  the mapping class group action through the complex $\mfc(\Sigma_{g+1})$, which is a genuinely topological quantity. \myskip

Theorem~\ref{cormain2} implies a Hopf algebraic statement:
Let $A$ be a ribbon factorizable Hopf algebra and denote by $A_\text{coadj}^*$ the dual of $A$ equipped with the coadjoint action. Consider now for $g\ge 0$ the $A$-module $A\otimes \left(  A_\text{coadj}^*     \right)^{\otimes g}$ (tensor product in the monoidal category of $A$-modules). By multiplication from the right on the $A$-factor, this becomes an $A$-bimodule.

\begin{repcorollary}{corha}
	Let $A$ be a ribbon factorizable Hopf algebra and $g\ge 0$.
	Then the Hochschild chains of $A$ with coefficients in the $A$-bimodule $A\otimes \left(  A_\text{coadj}^*     \right)^{\otimes g}$ carry a canonical homotopy coherent projective action of the mapping class group $\Map(\Sigma_{g+1})$ of the closed surface of genus $g+1$.
\end{repcorollary}

While the proof of our Main~Theorem~\ref{thmmain} uses Lyubashenko's work on the canonical coend $\mathbb{F}=\int^{X\in\cat{C}} X \otimes X^\vee$ of a modular category $\cat{C}$ 
and 
 the $\S$-transformation, 
 it does not directly build on Lyubashenko's construction of the projective mapping class group representations in \cite{lubacmp}.
 These are based on a presentation of  mapping class groups in terms of generators and relations and seem hard to adapt to a differential graded  framework. 
Instead, we adapt the Lego Teichmüller game developed by Bakalov and Kirillov in \cite{bakifm} based on \cite{hatcherthurston,harer,grothendieck} to our purposes by replacing their connected and simply connected groupoid of markings on an extended surface by a contractible $\infty$-groupoid.

These contractible $\infty$-groupoids replacing the groupoids of markings are obtained by localizing certain categories of colored markings at uncoloring morphisms and form, at a technical level,
the backbone of our construction. They provide a dictionary between low-dimensional topology and the homological algebra of a modular category including the calculus for homotopy coends over finite tensor categories from \cite{dva}. After gluing together the $\infty$-groupoids attached to varying surfaces via the Grothendieck construction, we obtain a model for the surface category and hence can extract explicitly computable mapping class group representations. We refer to this procedure as the \emph{homotopy coherent Lego Teichmüller game}.

In slightly more technical terms, the strategy is the following: We define a \emph{category  $\Mhat(\Sigma)$ of colored markings} on an extended surface $\Sigma$ formed by markings on $\Sigma$ with the additional datum of a subset of distinguished cuts which we call \emph{colored cuts}. We require that there is at least one such colored cut per closed connected component.
We also add \emph{uncolorings}, new non-invertible morphisms that reduce the number of colored cuts.  We then prove the crucial result that the category of colored markings $\Mhat(\Sigma)$ (Theorem~\ref{thmmhatcontractible}) is contractible.
The reason for the significance of the categories of colored markings is Theorem~\ref{thmfunctormhat} 
which states that 
marked blocks can  be naturally extended to functors $\Mhat(\Sigma)\to\Ch$ out of the  category $\Mhat(\Sigma)$ of colored markings on $\Sigma$.
The idea is to send a colored marking to a version of marked blocks which uses homotopy coends for gluing at all colored cuts and ordinary coends at uncolored cuts.
This is motivated by the key observation that the   marked blocks do not change up to equivalence if we replace the \emph{homotopy coends} used for the gluing by \emph{ordinary coends} at all but one cut per closed connected component (Corollary~\ref{coruncoloringmaps}). As a consequence, the functor $\Mhat(\Sigma)\to\Ch$ sends all  uncolorings  to equivalences and hence descends to the $\infty$-groupoid obtained by localizing $\Mhat(\Sigma)$ at all uncolorings.  This construction allows us to reduce some statements about our differential graded marked blocks to statements about 
 marked blocks
 with values in vector spaces.

The functors $\Mhat(\Sigma)\to\Ch$ descend to the category obtained  by gluing colored markings for \emph{different} surfaces together (the gluing is accomplished via the Grothendieck construction).
By a homotopy left Kan extension, we obtain a symmetric monoidal functor defined on labeled surfaces --- this will be our  modular functor. The proof of the equivalence $\block_\cat{C}^{\Sigma,\Gamma}(\underline{X})\ra{\simeq} \mfc(\Sigma,\underline{X})$ from \eqref{eqnequivblock} relies on the contractibility result Theorem~\ref{thmmhatcontractible}. Finally, we use \eqref{eqnequivblock} to conclude excision from a marked version of excision (Proposition~\ref{propexcision}) which is easier to prove. \myskip

It should be mentioned that 
the methods developed in this article could also be helpful to study   modular functors \emph{with values in vector spaces}.
We consistently use the Lego Teichmüller game from \cite{bakifm} and construct the modular functor by gluing together (in a categorical sense) markings for different surfaces via the Grothendieck construction and a  left Kan extension (note that this  is different from the strategy in \cite{jfcs}, see Remark~\ref{remjfcs}). 
The important, but subtle concept of coends in categories of left exact functors between finite categories from \cite{lubalex} is avoided and replaced by techniques which are easier to adapt to a differential graded framework.
One key simplification in comparison to \cite{lubacmp,svea,dva,svea2} (regardless of whether these works cover the vector space valued case or work at chain level or in (co)homology) is that our construction does \emph{not} rely on a concrete presentation of mapping class groups in terms of generators and relations, but is genuinely topological.

\subparagraph{Conventions.}\label{pageconventions} Throughout this text, we will work over an algebraically closed field $k$ which is \emph{not} assumed to have characteristic zero.
 By $\Ch$ we denote the symmetric monoidal category of chain complexes over $k$ equipped with its projective model structure in which weak equivalences (for short: equivalences) are quasi-isomorphisms and fibrations are degree-wise surjections. A (small) category enriched over $\Vect_k$ or $\Ch$ will be called a linear or differential graded category, respectively. Unless otherwise stated, functors between linear and differential graded categories will automatically be assumed to be enriched. 
By a \emph{(canonical) equivalence} between chain complexes we do not necessarily mean  a map in either direction, but also allow a (canonical) zigzag. We will denote equivalences by $\simeq$ while the notation $\cong$ is reserved for isomorphisms (in any category).

 \subparagraph{Acknowledgments.} 
We would like to thank 
	Adrien Brochier, 
	Damien Calaque,
	Jürgen Fuchs,
	David Jordan,
	André Henriques,
	Simon Lentner,
	Svea Nora Mierach,
	Lukas Müller,
	Claudia Scheimbauer,
	 Yorck Sommerhäuser
	 and Nathalie Wahl
	for helpful discussions.

CS and LW are supported by the Deutsche Forschungsgemeinschaft (DFG, German Research
Foundation) within the framework of the RTG 1670
``Mathematics Inspired by String Theory and QFT'' and
under Germany’s Excellence Strategy -- EXC 2121 ``Quantum Universe'' -- 390833306.

\spaceplease
\section{Marked blocks\label{secderivedblocks}}
We start by
giving the definition of marked blocks which should be seen as  auxiliary objects needed for the construction of modular functors. They will later enable us to perform  concrete computations.
Moreover, we will already establish  a version of excision for marked surfaces that may be seen as a preparation  for the excision property that will be a part of the  modular functor.

\subsection{Conventions on surfaces, cut systems and markings\label{secsurfaceterminology}}
Before defining marked blocks, we recall some terminology and conventions on surfaces, cut systems and markings from 
\cite{bakifm,baki,jfcs}:
In the sequel, a \emph{surface} will be an abbreviation for compact oriented two-dimensional smooth manifold with boundary. A \emph{surface with oriented boundary} is a surface such that every boundary component is endowed with an orientation. If this orientation of a specific boundary component coincides with the orientation inherited from the surface, we refer to this component as \emph{outgoing}, otherwise as \emph{incoming}.
An \emph{extended surface} is a surface with oriented boundary and the choice of a marked point on every boundary component.

\subparagraph{Cut systems.}
Every surface can be non-uniquely cut into spheres with several open disks removed.
This is formalized as follows: We define a \emph{cut} on an extended surface $\Sigma$ as an oriented simple closed curve in the interior of $\Sigma$ with the choice of a point on this curve. An isotopy class of a finite family $C$ of disjoint cuts on $\Sigma$ is called a \emph{cut system} if every component of $\Sigma \setminus C$ has genus zero (here the isotopy has to preserve the disjointness of the cuts).
The number of cuts in a cut system $C$ will be denoted by $|C|$.  The manifold with boundary obtained by cutting $\Sigma$ along $C$ will be denoted by $\cut_C \Sigma$ (we must keep in mind that, strictly speaking, $\cut_C \Sigma$ is only well-defined up to diffeomorphism because cut systems are defined as isotopy classes).
If $C$ consists of $|C|$ cuts, then $\cut_C \Sigma$ has $2|C|$ boundary components more than $\Sigma$. The orientation and the marked point on these additional boundary components are inherited from the cutting curve. The relations between different cut systems will be covered in Section~\ref{secfinemarkings}.

\subparagraph{Standard spheres.}
For $n\ge 1$ and a family $\underline{\varepsilon}=(\varepsilon_1,\dots,\varepsilon_n) \in \{\pm 1\}^n$ of signs, we define a particular extended surface, namely the \emph{standard sphere} $\mathbb{S}_{n,\underline{\varepsilon}}^\circ$. The underlying surface is the Riemann sphere $\mathbb{C}\cup \{\infty\}$ with $n$ open disks $D_1,\dots,D_n$ with radius $1/3$ and centers $1,\dots,n$ removed. The orientation of this surface is the standard one, and the boundary circle with center $j$, where $1\le j\le n$, is endowed with the inherited orientation if $\varepsilon_j=+1$ (making this component outgoing), and endowed with the opposite orientation if $\varepsilon_j=-1$ (making this component \emph{incoming}). The marked points lie at $j-\im /3$ for $1\le j\le n$. This extended surface is decorated with a graph $\Gamma_n^\circ$ called the \emph{standard marking} whose vertices are the marked points and the so-called \emph{internal vertex} at $-2 \im$. The edges are the $n$ straight lines between the internal vertex and each marked point. The point $1-\im /3$ is called the \emph{distinguished vertex}, and the edge connecting the internal vertex to the distinguished vertex is called the \emph{distinguished edge}. The standard marking for a sphere with three holes and an example of a boundary orientation is depicted in Figure~\ref{figstdmarking}.

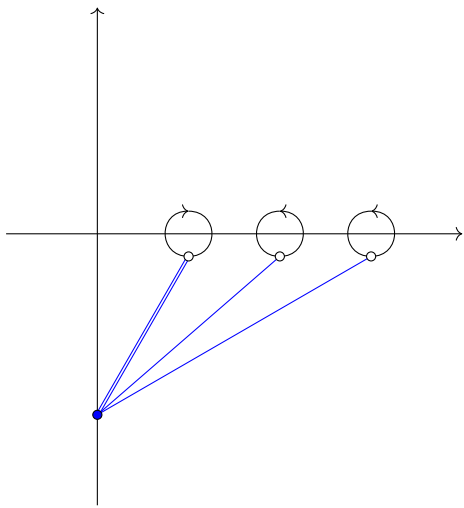
\begin{figure}[h]
	\centering
	\begin{tikzpicture}[scale=0.6, style/.style={circle, inner sep=0pt,minimum size=0mm},
	poin/.style={circle, inner sep=0.2pt,minimum size=0.2mm},decoration={
		markings,
		mark=at position 0.7 with {\arrow{>}}}]
\node [style=none] (14) at (-22, 5) {};
\node [style=none] (15) at (-22, -6) {};
\node [style=none] (16) at (-24, 0) {};
\node [style=none] (17) at (-14, 0) {};
\node [style=none] (18) at (-20, 0.5) {};
\node [style=none] (19) at (-20, -0.5) {};
\node [style=none] (21) at (-18, 0.5) {};
\node [style=none] (22) at (-18, -0.5) {};
\node [style=none] (23) at (-16, 0.5) {};
\node [style=none] (24) at (-16, -0.5) {};
\node [style=none] (25) at (-22, -4) {};
\draw [->] (15.center) to (14.center);
\draw [->] (16.center) to (17.center);
\draw [bend left=90, looseness=1.75] (18.center) to (19.center);
\draw [->, bend left=90, looseness=1.75] (19.center) to (18.center);
\draw [bend left=90, looseness=1.75] (22.center) to (21.center);
\draw [->, bend right=90, looseness=1.75] (22.center) to (21.center);
\draw [bend left=90, looseness=1.75] (24.center) to (23.center);
\draw [->, bend right=90, looseness=1.75] (24.center) to (23.center);
\draw [blue,double] (25.center) to (19.center);
\draw [blue] (25.center) to (22.center);
\draw [blue] (25.center) to (24.center);
 \draw[fill=white] (19) circle (1mm);
  \draw[fill=white] (22) circle (1mm);
   \draw[fill=white] (24) circle (1mm);
   \draw[fill=blue] (25) circle (1mm);
	\end{tikzpicture}
	\caption{Standard marking on a sphere with three holes and sign tuple $\underline{\varepsilon}=(-1,+1,+1)$. The long straight arrows are the coordinate axes of the complex plane.
		On the three boundary components, the marked points are drawn as white dots and the orientation is indicated by an arrow. The distinguished vertex of the marking is a blue dot, and the edges of the marking are drawn in blue. The distinguished edge is drawn as a double line.}
	\label{figstdmarking}
\end{figure}

\subparagraph{Markings.}
Let $\Sigma$ be a connected extended surface of genus zero. 
A \emph{chart} for $\Sigma$ is a diffeomorphism $\Phi : \Sigma \to  \mathbb{S}_{n,\underline{\varepsilon}}^\circ$ for some $n\ge 1$ and $\underline{\varepsilon} \in \{\pm 1\}^n$ which preserves the orientation, the orientation of the boundary and sends marked points to marked points.
We call a graph $\Gamma$ embedded in $\Sigma$ a \emph{$\Phi$-compatible graph} for a chart $\Phi : \Sigma \to  \mathbb{S}_{n,\underline{\varepsilon}}^\circ$ if $\Phi$ sends $\Gamma$ to the standard graph $\Gamma_n^\circ$ on $\mathbb{S}_{n,\underline{\varepsilon}}^\circ$.
Consider two pairs $(\Phi_j,\Gamma_j)$ for $j=0,1$, where $\Phi_j : \Sigma \to  \mathbb{S}_{n,\underline{\varepsilon}}^\circ$ is a chart (both for the same $n$ and $\underline{\varepsilon}$) and $\Gamma_j$ a $\Phi_j$-compatible graph on $\Sigma$.
An isotopy $(\Phi_0,\Gamma_0) \to (\Phi_1,\Gamma_1)$ is an isotopy $\Phi_t$ from $\Phi_0$ to $\Phi_1$ through charts.
 An isotopy class of a pair formed by a chart for $\Sigma$ and a compatible graph is called a
 \emph{marking without cuts} on $\Sigma$.
A \emph{marking} on an arbitrary extended surface $\Sigma$ is a cut system $C$ together
with a marking without cuts on every connected component of $\cut_C \Sigma$.
Note that this equips $\Sigma$ in particular with an isotopy class of graphs $\Gamma$, see Figure~\ref{figmarking} for an example.
Often, we will use the symbol $\Gamma$ for this graph to denote the entire marking, thereby suppressing the underlying cut system and the charts in the notation.

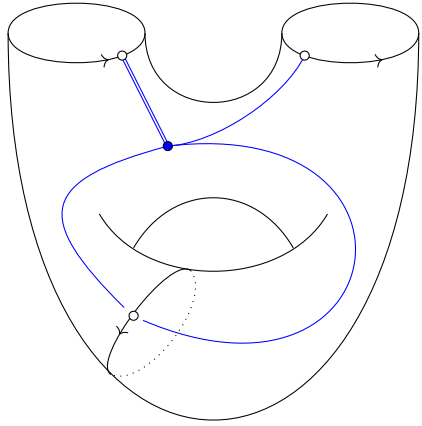
\begin{figure}[h]
	\centering
 \begin{tikzpicture}[scale=0.6, style/.style={circle, inner sep=0pt,minimum size=0mm},
 poin/.style={circle, inner sep=0.2pt,minimum size=0.2mm},decoration={
 	markings,
 	mark=at position 0.7 with {\arrow{>}}}]
 \node [style=none] (0) at (-15, 7) {};
 \node [style=none] (1) at (-12, 7) {};
 \node [style=none] (2) at (-9, 7) {};
 \node [style=none] (3) at (-6, 7) {};
 \node [style=none] (4) at (-12.25, 2.25) {};
 \node [style=none] (5) at (-8.75, 2.25) {};
 \node [style=none] (6) at (-13, 3) {};
 \node [style=none] (7) at (-8, 3) {};
 \node [style=none] (8) at (-12.5, 6.5) {};
 \node [style=none] (9) at (-8.5, 6.5) {};
 \node [style=none] (10) at (-11.5, 4.5) {};
 \node [style=none] (11) at (-12.75, -0.55) {};
 \node [style=none] (12) at (-11, 1.75) {};
 \node  (13)  at (-12.25, 0.75)  {};
 \draw [in=90, out=90, looseness=0.75] (0.center) to (1.center);
 \draw [in=90, out=90, looseness=0.75] (2.center) to (3.center);
 \draw [in=-90, out=-90, looseness=1.75] (1.center) to (2.center);
 \draw [in=-90, out=-90, looseness=3.25] (0.center) to (3.center);
 \draw [bend left=60, looseness=1.25] (4.center) to (5.center);
 \draw [bend right=60] (6.center) to (7.center);
 \draw [postaction={decorate}, bend right=90, looseness=0.75] (0.center) to (1.center);
 \draw [postaction={decorate}, bend right=90, looseness=0.75] (2.center) to (3.center);
 \draw [dotted, bend left=75, looseness=0.75] (12.center) to (11.center);
 \draw [postaction={decorate},bend right=90, looseness=0.50] (12.center) to (11.center);
 \draw [bend left=60, looseness=2.00,blue] (13) to (10);
 \draw  [double,blue] (10) to (8);
 \draw [bend right=30, looseness=0.75,blue] (10) to (9);
 \draw [bend left=105, looseness=4.00,blue] (10) to (13);
 \draw[fill=white] (8) circle (1mm);
 \draw[fill=white] (9) circle (1mm);
 \draw[fill=white] (13) circle (1mm);
 \draw[fill=blue] (10) circle (1mm);
 \end{tikzpicture}
 	\vspace*{-1cm}
 	\caption{A marking on a torus $\mathbb{T}^2$ with two boundary components. The cut system has one cut such that $\cut_C \mathbb{T}^2$ is a sphere with four holes.
 		We use the same drawing convention for the marked points, the orientation and the marking as in Figure~\ref{figstdmarking}.}
 	\label{figmarking}
 	\end{figure}

\subsection{Definition of marked blocks\label{derivedconformalblockssec}}
After these preparations, we may  define the marked blocks for a pivotal $k$-linear monoidal category $\cat{C}$.
The monoidal product and the monoidal unit for the underlying $k$-linear monoidal category will be denoted by $\otimes$ and $I$, respectively. 
Pivotality means that $\cat{C}$ is rigid 
(we denote by $-^\vee$ the duality functor; our conventions for the duality are the ones from \cite{egno})
and equipped
with a monoidal natural isomorphism $\id_\cat{C} \cong -^{\vee \vee}$ from the identity functor on $\cat{C}$ to the double dual functor.
Thanks to the pivotal structure, left and right duals coincide.
 We introduce the notation $X^\varepsilon$ for $\varepsilon \in \{\pm 1\}$ by $X^1:=X$ 
and $X^{-1}:=X^\vee$. 

In order to establish later the crucial results for these marked blocks, we will need $\cat{C}$ to be modular (a notion recalled in Section~\ref{secdmfmain}), but the mere 
definition of marked blocks makes sense in greater generality.

Denote by $\Sigma_0^{p|n-p}$ a surface of genus zero with $n\ge 1$ holes, $p$ of which are incoming. Let $\Sigma_0^{p|n-p}$ be endowed with a marking without cuts, i.e.\ an isotopy class of a chart $\Phi : \Sigma_0^{p|n-p} \to \mathbb{S}_{n,\underline{\varepsilon}}^\circ$ and a graph $\Gamma$ that is mapped by $\Phi$
 to the standard marking $\Gamma_n^\circ$ on the standard sphere $\mathbb{S}_{n,\underline{\varepsilon}}^\circ$. 
 Now let $\underline{X}$ be a labeling of the boundary components of $\Sigma_0^{p|n-p}$ with projective objects in $\cat{C}$, i.e.\ a function from $\pi_0(\partial \Sigma_0^{p|n-p})$ to the projective objects of $\cat{C}$. 
The map $\pi_0(\partial \mathbb{S}_{n,\underline{\varepsilon}}^\circ ) \to \pi_0(\partial \Sigma_0^{p|n-p})$ induced by $\Phi$ provides a numbering $(X_1,\dots,X_n)$ of the objects that are part of the labeling.
We define the vector space
\begin{align}
\block_\cat{C}^{\Sigma_0^{p|n-p},\Gamma} (\underline{X})             :=  \cat{C}\left(I,  X_{1}^{\varepsilon_1}   \otimes\dots\otimes  X_{n}^{\varepsilon_n } \right)    \ , \label{eqndmfoncyl}
\end{align}
where $\cat{C}(-,-)$ is our notation for the morphism spaces of $\cat{C}$,
and observe that this is well-defined, i.e.\ it does not depend on the representative of our marking (recall that in our language a marking is always an isotopy class of certain data as precisely defined above).

In the next step, let $(\Sigma,\Gamma)$ be a connected  marked surface. The surface $\cut_C \Sigma$ obtained by cutting $\Sigma$ along the cut system underlying the     marking yields components $(\Sigma_1,\Gamma_1),\dots,(\Sigma_\ell,  \Gamma_\ell  )$ with each $\Sigma_j$ being a sphere with $m_j\ge 1$ holes. We are choosing here a numbering for the surfaces $\Sigma_j$ and we will do the same for the cuts in a moment; this is done mainly for the readability of this rather technical definition --- we will explain afterwards how the dependence on this order is dealt with. 
If $n_j$ is the number of boundary components of $\Sigma_j$ that do not arise from the cutting, then we can label these boundary components by a family $\underline{X}_j$ of length $n_j$ of projective objects in $\cat{C}$. These boundary labels form a boundary label $\underline{X}$ for $\Sigma$ (and of course, every boundary label for $\Sigma$ arises this way). Additionally, we label the two oppositely oriented boundaries arising from a cut $c_i$ by a projective object $Y_i$, where $1\le i\le |C|$.  Denote by $\underline{Y}_j$ the boundary labels of $\Sigma_j$ arising from cuts. 
Now it makes sense to consider the vector spaces
\begin{align}
\block_\cat{C}^{\Sigma_j,\Gamma_j} (\underline{X}_j,     \underline{Y}_j     )  \quad \text{for} \quad 1\le j\le \ell \ . \label{eqndefviaderivedcoend}
\end{align}
As these vector spaces run over $j$, each $Y_i$ appears precisely twice, for the two boundary components resulting from the cut --- once as covariant and once as contravariant argument. 
Therefore, we may define the chain complex
\begin{align}
\block_\cat{C}^{\Sigma,\Gamma} (\underline{X}):=\lint^{Y_1,\dots,Y_{|C|} \in \Proj \cat{C}} \bigotimes_{j=1}^\ell \block_\cat{C}^{\Sigma_j,\Gamma_j} (\underline{X}_j,     \underline{Y}_j     )      \label{eqnderivedconfblock}
\end{align}
via an iterated homotopy coend \cite[Section~2]{dva}.
Recall that for a functor $G:(\Proj \cat{C})^\op \otimes \Proj \cat{C}\to \Ch$,
the homotopy coend $\lint^{X\in\Proj\cat{C}} G(X,X)$ is given by the realization of the simplicial chain complex
\begin{center}
	{\footnotesize
		\begin{equation}
			\begin{tikzcd}
				\dots \ar[r, shift left=6]  \ar[r, shift left=2]
				\ar[r, shift right=6]  \ar[r, shift right=2]
				& 	\displaystyle \bigoplus_{  \substack{X_0,X_1 ,X_2 \\ \in \Proj\cat{C}}}  \cat{C}(X_1,X_0)\otimes\cat{C}(X_2,X_1)\otimes  G(X_0,X_2)
				\ar[l, shift left=4]  \ar[l]
				\ar[l, shift right=4]  
				\ar[r, shift left=4] \ar[r, shift right=4] \ar[r] & \displaystyle \bigoplus_{X_0,X_1 \in \Proj\cat{C}}  \cat{C}(X_1,X_0)\otimes G(X_0,X_1) \ar[r, shift left=2] \ar[r, shift right=2]
				\ar[l, shift left=2] \ar[l, shift right=2]
				& \displaystyle \bigoplus_{X_0 \in \Proj \cat{C}} G(X_0,X_0)\ ,  \ar[l] \\
			\end{tikzcd}
	\end{equation}}
\end{center}
\normalsize
where the face maps use the composition in $\cat{C}$ and the degeneracies insert identities.

In \eqref{eqnderivedconfblock} we have used a numbering for the iterated homotopy coends and the multiple tensor products, which, strictly speaking, is problematic because those numberings are not part of the data of a marking (for instance, the set of cuts was not assumed to be ordered). 
Let us discuss the remedy in detail for the iterated homotopy coends (we comment on the multiple tensor products afterwards):
Denote by $\cat{O}_C$ the action groupoid of the free and transitive action of the permutation group on $|C|$ letters on the bijections from the set of cuts of $C$ to $\{1,\dots,|C|\}$. 
Then the right hand side of \eqref{eqnderivedconfblock}, when evaluated for all possible orderings of cuts, will actually give us a functor $\cat{O}_C \to \Ch$. To this end, a permutation of such an ordering must be sent to an isomorphism compatibly with the composition of permutations. The latter can be done using the Fubini Theorem from \cite[Proposition~2.7]{dva}. The marked block $\block_\cat{C}^{\Sigma,\Gamma} (\underline{X})$ can then be defined as the colimit  of that functor (which is also the homotopy colimit). 
This  gives us the notion of an \emph{unordered} iterated homotopy coend. Of course, if we pick an ordering for the set of cuts, the complex computed for that ordering will be canonically isomorphic to colimit over all orderings.  
In the same way, we can use an unordered tensor product (defined using the symmetric braiding of $\Ch$) to get rid of the numbering of the surfaces that $\Sigma$ is cut into.
In the sequel, however, we will suppress such subtleties in the notation and will understand an expression like   the right hand side of \eqref{eqnderivedconfblock} always as an \emph{unordered} homotopy coend and/or tensor product.

As for \eqref{eqndmfoncyl}, we can observe that \eqref{eqnderivedconfblock} does not depend on any representatives chosen for the marking (in particular, it is not a problem that $\cut_C \Sigma$ is only well-defined up to diffeomorphism). 
The reason for this is that the definitions \eqref{eqndmfoncyl} and \eqref{eqnderivedconfblock} just depend on the combinatorics coming from the incidences of cuts and graphs, i.e.\ their relative location to each other. This important point is already implicit in related constructions in  \cite{baki,jfcs}.\label{pageincidences}

The definition \eqref{eqnderivedconfblock} is extended to non-connected surfaces by sending the disjoint union of connected  marked surfaces  with boundary components labeled by projective objects to the tensor product of the chain complexes defined for the connected case.

\begin{definition}\label{derivedconformalblock}
	We refer to $\block_\cat{C}^{\Sigma,\Gamma} (\underline{X})$ as the \emph{marked block} for the  marked surface $(\Sigma,\Gamma)$, the pivotal linear monoidal category $\cat{C}$ and the projective boundary label $\underline{X}$.
	\end{definition}

Of course,  the marked blocks are functorial in the boundary label. This will play a role later, see Section~\ref{secdefdmf} and in particular Remark~\ref{cylcatfunctorialdeprem}.

The following example shows 
how the marked blocks are designed to produce
algebraic quantities appearing in the homological algebra of $\cat{C}$:

\begin{example}[Relation to Hochschild chains]\label{exhochschild}
	For the closed torus $\mathbb{T}^2$ with the  marking $\Gamma$ with one cut shown in Figure~\ref{figmarkingtorus},         
	 the marked block $\block_\cat{C}^{\mathbb{T}^2,\Gamma}$ is  given by the 
	 homotopy coend
	 \begin{align} \block_\cat{C}^{\mathbb{T}^2,\Gamma} =\lint^{X \in \Proj \cat{C}}  \cat{C}(X,X) \ , \end{align}
	 which by the definition is given by the
	 normalized chains on the simplicial vector space
	\footnotesize
	\begin{equation}
	\begin{tikzcd}
	\dots \ar[r, shift left=6]  \ar[r, shift left=2]
	\ar[r, shift right=6]  \ar[r, shift right=2]
	& 	\displaystyle \bigoplus_{  \substack{X_0,X_1 ,X_2 \\ \in \Proj\cat{C}}}  \cat{C}(X_1,X_0)\otimes\cat{C}(X_2,X_1)\otimes  \cat{C}(X_0,X_2)
	\ar[l, shift left=4]  \ar[l]
	\ar[l, shift right=4]  
	\ar[r, shift left=4] \ar[r, shift right=4] \ar[r] & \displaystyle \bigoplus_{X_0,X_1 \in \Proj\cat{C}}  \cat{C}(X_1,X_0)\otimes \cat{C}(X_0,X_1) \ar[r, shift left=2] \ar[r, shift right=2]
	\ar[l, shift left=2] \ar[l, shift right=2]
	& \displaystyle \bigoplus_{X_0 \in \Proj \cat{C}} \cat{C}(X_0,X_0)\ ,  \ar[l] \\
	\end{tikzcd}
	\end{equation}
	\normalsize
	where the face maps are defined using the composition in $\cat{C}$ and the degeneracies insert identities.
	This is the Hochschild complex of the linear category $\Proj \cat{C}$.
	It is called the Hochschild complex of $\cat{C}$ (instead of $\Proj \cat{C}$) in \cite{dva} because it is implicitly assumed that only the projective objects are used to construct the complex. The latter is motivated by the  \emph{Agreement Principle}  \cite{mcarthy,keller} that says that the Hochschild complex built from the projective objects of a linear category $\cat{C}$ is equivalent  to the usual Hochschild complex of a finite-dimensional algebra $A$ if $\cat{C}$ is the category of finite-dimensional modules over $A$ (see also Section~\ref{secbimodules}).
	\begin{figure}[h]
		\centering
		\begin{tikzpicture}[scale=0.6, style/.style={circle, inner sep=0pt,minimum size=0mm},
		poin/.style={circle, inner sep=0.2pt,minimum size=0.2mm},decoration={
			markings,
			mark=at position 0.7 with {\arrow{>}}}]
\node [style=none] (4) at (-12.25, 2.25) {};
\node [style=none] (5) at (-8.75, 2.25) {};
\node [style=none] (6) at (-13, 3) {};
\node [style=none] (7) at (-8, 3) {};
\node [style=none] (12) at (-10.5, 1.75) {};
\node [style=none] (13) at (-15, 2.5) {};
\node [style=none] (14) at (-6, 2.5) {};
\node [style=none] (15) at (-10.5, -0.14) {};
\node [style=none] (21) at (-11.05, 0.5) {};
\node [style=none] (22) at (-10.5, 4.5) {};
\draw [bend left=60, looseness=1.25] (4.center) to (5.center);
\draw [bend right=60] (6.center) to (7.center);
\draw [bend left=90] (13.center) to (14.center);
\draw [bend right=90] (13.center) to (14.center);
\draw [bend right=90,dotted] (15.center) to (12.center);
\draw [bend left=90,postaction={decorate}] (15.center) to (12.center);
\draw [in=0, out=0, looseness=3.00,blue,double] (22.center) to (21.center);
\draw [in=-180, out=-180, looseness=2.50,blue] (22.center) to (21.center);
 \draw[fill=white] (21) circle (1mm);
\draw[fill=blue] (22) circle (1mm);
		\end{tikzpicture}
		\vspace*{-0.5cm}
		\caption{Marking for the closed torus (same drawing conventions as in Figure~\ref{figstdmarking}).}
		\label{figmarkingtorus}
	\end{figure}
	\end{example}

\subsection{Excision properties of marked blocks}
The locality properties of modular functors are usually phrased in terms of \emph{factorization} and \emph{self-sewing}, see \cite{baki,jfcs}.
For the marked blocks, we will already formulate here a marked version of such a locality property that will follow in a straightforward way from the definitions.
 Factorization and self-sewing can even be packaged into one property that we call \emph{excision}.

In order to formalize the excision property, let $(\Sigma,\Gamma)$ be a  marked surface
and denote by $\Sigma'$ the result of sewing a specific incoming boundary component to a specific outgoing boundary component (more complicated sewing operations can be considered as well, but they can the decomposed into sewings of the form just described).
We write this sewing operation symbolically as an arrow $s : \Sigma \to \Sigma'$ (in Section~\ref{secsurfacecat} we will discuss a category of surfaces in which these sewings are promoted to actual morphisms, but this is not needed for the moment). The  marking $\Gamma$ on $\Sigma$ induces a  marking $\Gamma'$ on $ \Sigma'$.

Consider the marked block $\block^{\Sigma,\Gamma}_\cat{C}(\underline{X},P,Q)$ for $(\Sigma,\Gamma)$ and boundary label $(\underline{X},P,Q)$, where $P$ and $Q$ are the labels for the incoming and outgoing boundary component along which we glue, and $\underline{X}$ is a label for the remaining boundary components. 
Since  $\cat{C}(-,-):\cat{C}^\op\otimes\cat{C}\to\Vect$ is a functor,  it follows from the definition of marked blocks in \eqref{eqnderivedconfblock}
that the assignment $P\otimes Q \mapsto \block^{\Sigma,\Gamma}_\cat{C}(\underline{X},P,Q)$ yields a functor $(\Proj \cat{C})^\op \otimes \Proj \cat{C}\to\Ch$, i.e.\ a differential graded bimodule over $\Proj \cat{C}$. 
We now obtain a canonical isomorphism \begin{align} \block^{\Sigma',\Gamma'}_\cat{C} (\underline{X})\cong \lint^{P\in\Proj \cat{C}}\block^{\Sigma,\Gamma}_\cat{C}(\underline{X},P,P)  \label{eqnfubiniiso} \end{align} 
because both complexes describe the colimit by which the unordered homotopy coend is defined.
In particular, we obtain a \emph{sewing map}
\begin{align}
s^P : \block^{\Sigma,\Gamma}_\cat{C}(\underline{X},P,P) \to \lint^{P\in\Proj \cat{C}}\block^{\Sigma,\Gamma}_\cat{C}(\underline{X},P,P) \cong \block^{ \Sigma', \Gamma'}_\cat{C} (\underline{X}) \ ,            \label{eqnsewingmaps}
\end{align}
which is just the structure map $\block^{\Sigma,\Gamma}_\cat{C}(\underline{X},P,P) \to \lint^{P\in\Proj \cat{C}}\block^{\Sigma,\Gamma}_\cat{C}(\underline{X},P,P)$
composed with the isomorphism \eqref{eqnfubiniiso}.
By definition of the homotopy coend we now obtain the following statement:

\begin{proposition}[Excision with marking]\label{propexcision}
For every pivotal linear monoidal category $\cat{C}$, any  marked surface $(\Sigma,\Gamma)$ and a sewing $s : (\Sigma,\Gamma) \to (\Sigma',\Gamma')$ that glues one incoming boundary component to an outgoing one, the sewing maps \eqref{eqnsewingmaps} induce an equivalence
\begin{align*}
\lint^{P\in\Proj \cat{C}}\block^{\Sigma,\Gamma}_\cat{C}(\underline{X},P,P) \xrightarrow{\ \simeq \ }  \block^{   \Sigma',\Gamma'}_\cat{C} (\underline{X}) 
\end{align*}
of chain complexes
from the homotopy coend over the $\Proj \cat{C}$-bimodule $\block^{\Sigma,\Gamma}_\cat{C}(\underline{X},-,-)$
to $\block^{   \Sigma',\Gamma'}_\cat{C} (\underline{X}) $.
\end{proposition}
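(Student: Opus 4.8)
The plan is to prove the slightly stronger statement that the map in question is an \emph{isomorphism} of chain complexes, and that this isomorphism is precisely the canonical one recorded in \eqref{eqnfubiniiso}; the proof is then essentially an unwinding of the definition \eqref{eqnderivedconfblock} of marked blocks together with one application of the Fubini theorem \cite[Proposition~2.7]{dva}.

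First I would analyze how the sewing $s:(\Sigma,\Gamma)\to(\Sigma',\Gamma')$ affects the combinatorics of the marking. Sewing the incoming boundary component labeled $P$ to the outgoing one labeled $Q$ produces in $\Sigma'$ an oriented simple closed curve with a marked point --- the image of the two identified boundary circles --- which I will call $c$. Since the two boundary components of $\Sigma$ that disappear under the sewing are exactly the two oppositely oriented copies obtained by cutting $\Sigma'$ along $c$, we have $\cut_{\{c\}}\Sigma'=\Sigma$, and hence $\cut_{C\sqcup\{c\}}\Sigma'=\cut_C\Sigma$, where $C$ is the cut system underlying $\Gamma$. In particular every component of $\Sigma'\setminus(C\sqcup\{c\})$ has genus zero, so $C\sqcup\{c\}$ is a cut system for $\Sigma'$, and the genus-zero pieces $(\Sigma_1,\Gamma_1),\dots,(\Sigma_\ell,\Gamma_\ell)$ into which $\Gamma'$ cuts $\Sigma'$, together with their graphs, agree with those for $(\Sigma,\Gamma)$ --- which is precisely what makes $\Gamma$ induce the marking $\Gamma'$. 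Consequently the marked block $\block^{\Sigma',\Gamma'}_\cat{C}(\underline{X})$ is, by \eqref{eqnderivedconfblock}, the iterated homotopy coend over the labels $Y_1,\dots,Y_{|C|}$ of the cuts of $C$ \emph{and} one further label, say $P$, attached to the two boundary slots of $\bigotimes_j\block^{\Sigma_j,\Gamma_j}_\cat{C}(\underline{X}_j,\underline{Y}_j)$ created by $c$ --- one covariant, one contravariant, as the orientation of $c$ prescribes.

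Next I would separate out the homotopy coend over the variable $P$ using the Fubini theorem for iterated unordered homotopy coends \cite[Proposition~2.7]{dva}. This rewrites $\block^{\Sigma',\Gamma'}_\cat{C}(\underline{X})$ as $\lint^{P\in\Proj\cat{C}}$ of the homotopy coend over $Y_1,\dots,Y_{|C|}$ of $\bigotimes_{j=1}^\ell\block^{\Sigma_j,\Gamma_j}_\cat{C}(\underline{X}_j,\underline{Y}_j)$ with $P$ plugged into the two $c$-slots; but that inner homotopy coend is exactly $\block^{\Sigma,\Gamma}_\cat{C}(\underline{X},P,P)$ by the definition of the marked block for $(\Sigma,\Gamma)$ with boundary label $(\underline{X},P,P)$. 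This produces the canonical isomorphism $\block^{\Sigma',\Gamma'}_\cat{C}(\underline{X})\cong\lint^{P}\block^{\Sigma,\Gamma}_\cat{C}(\underline{X},P,P)$ of \eqref{eqnfubiniiso}. Finally, unravelling the definitions, the map induced on the homotopy coend by the sewing maps \eqref{eqnsewingmaps} --- each of which is the corresponding structure map into $\lint^{P}\block^{\Sigma,\Gamma}_\cat{C}(\underline{X},P,P)$ postcomposed with \eqref{eqnfubiniiso} --- is exactly this isomorphism, so in particular it is an equivalence, as claimed.

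The only genuinely delicate point is the bookkeeping around the \emph{unordered} homotopy coends and tensor products: one must check that splitting off the coend over $P$ is compatible with the action of the groupoid $\cat{O}_{C\sqcup\{c\}}$ of orderings of cuts and with the symmetric braiding used for the unordered tensor product. This is handled exactly by the Fubini theorem of \cite[Proposition~2.7]{dva}, in the same spirit as the earlier observation that marked blocks depend only on the incidence combinatorics of cuts and graphs. I do not expect any substantive obstacle: the statement is at bottom a repackaging of the colimit defining the homotopy coend.
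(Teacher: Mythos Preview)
Your proposal is correct and follows essentially the same approach as the paper: the paper also notes that the map is actually an isomorphism, and obtains \eqref{eqnfubiniiso} because both sides describe the same colimit defining the unordered homotopy coend --- which is exactly the Fubini argument you spell out. You are simply more explicit about the combinatorics of how the sewing adds the cut $c$ to the cut system and about invoking \cite[Proposition~2.7]{dva}, whereas the paper treats the entire statement as immediate from the definition of the homotopy coend.
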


	In fact, this map is actually an isomorphism with the concrete models for the homotopy coend that we use.

Of course, the proof of the marked version of excision is immediate. The more non-trivial task of formulating such statements independently of the marking will be addressed later.

\subsection{Homotopy coends over projectives and Lyubashenko's coend\label{secaugmentationfibration}}
In \cite[Section~3.2]{dva}, we have proven a relation between homotopy coends over projective objects and the Lyubashenko coend whose definition we recall in a moment 
(we have also argued there that this is an instance of the general principle to express traces via class functions, but this background is not  needed to understand the statement). After recalling some terminology, we will prove a generalization of the statements made in \cite[Section~3.2]{dva} that we will need in the sequel.

\subparagraph{Finite tensor categories.}
As mentioned in the introduction, a \emph{finite category} is a linear Abelian category 
with finite-dimensional morphism spaces, enough projectives, finitely many isomorphism classes of simple objects such that every object has finite length.
A linear category is finite if and only if it is linearly equivalent to
  the category of finite-dimensional modules over a finite-dimensional algebra (which does not mean that choosing such an equivalence 
 will be necessarily helpful). 
A \emph{tensor category} is a
linear Abelian rigid monoidal category with simple unit. 
A \emph{finite tensor category} \cite{etinghofostrik} is a tensor category whose underlying linear category is a finite category.
From these definitions, one concludes the following:
In a finite tensor category $\cat{C}$,
 any tensor product $X\otimes Y$ is projective if $X$ or $Y$ is projective. Moreover, the tensor product is exact in both arguments. Finally, any finite tensor category is self-injective, i.e.\ the projective objects are precisely the injective ones.

\subparagraph{The Lyubashenko coend.}
For any finite tensor category $\cat{C}$, one may define the coend
\begin{align}
\mathbb{F}:=\int^{X \in \cat{C}} X \otimes X^\vee \label{lyubacoend}
\end{align}
which is called the \emph{canonical coend of $\cat{C}$} or also the \emph{Lyubashenko coend} due to its appearance in \cite{lubacmp,luba,lubalex}. 
This object is the key to the construction of the mapping class group actions in \cite{lubacmp}. 

In \cite[Section~3.2]{dva}, the coend \eqref{lyubacoend} is replaced by a (finite version of a) homotopy coend leading to a differential graded object $\flint^{X \in \Proj \cat{C}} X\otimes X^\vee$ 
in $\cat{C}$ defined as follows:
The objects
\begin{align}
	\bigoplus_{X_0,\dots,X_n \in \Proj\cat{C}}  \cat{C}(X_n,X_{n-1})\otimes\dots\otimes\cat{C}(X_1,X_0) \otimes X_n \otimes X_0^\vee
\end{align}
living in a completion of $\cat{C}$ under infinite direct sums can be used to assemble a simplicial object
\begin{center}
	{	\begin{equation}\label{eqnresF}
			\begin{tikzcd}  
				\ar[r, shift left=4] \ar[r, shift right=4] \ar[r]\dots  & \displaystyle \bigoplus_{X_0,X_1 \in \Proj\cat{C}}  \cat{C}(X_1,X_0)\otimes X_1 \otimes X_0^\vee \ar[r, shift left=2] \ar[r, shift right=2]
				\ar[l, shift left=2] \ar[l, shift right=2]
				& \displaystyle \bigoplus_{X_0 \in \Proj \cat{C}} X_0\otimes X_0^\vee\ .  \ar[l] \\
			\end{tikzcd}
	\end{equation}}
\end{center}
It is shown that the direct sums appearing here can be reduced to finite direct sums running only over a suitable finite collection of projective objects (e.g.\ a projective generator) without changing the simplicial object up to equivalence.
One defines $\flint^{X \in \Proj \cat{C}} X\otimes X^\vee$
(the `f' stands for \emph{finite}) as the differential graded object in $\cat{C}$ obtained by the realization of any such finite reduction of~\eqref{eqnresF} (this does not depend on the concrete reduction up to homotopy equivalence).  
It is also shown that $\flint^{X \in \Proj \cat{C}} X\otimes X^\vee$ is a projective resolution of $\mathbb{F}$. This projective resolution appears in the following important homological algebra result that is the key to understanding marked blocks because it gives us the possibility to express iterated homotopy coends over morphism spaces in a different way:

\begin{proposition}\label{propoitcoend}
	For any pivotal finite tensor category $\cat{C}$, there is a canonical equivalence of chain complexes
	\begin{align}
	\lint^{P_1,\dots,P_g \in \Proj \cat{C}} \cat{C}(X,P_1\otimes P_1^\vee \otimes \dots \otimes P_g \otimes P_g^\vee) \simeq\ \cat{C}\left(X,   \left(\flint^{P\in \Proj \cat{C}}   P\otimes P^\vee\right)  ^{\otimes g}  \right)
	\end{align} for any $X\in\cat{C}$ and $g\ge 0$.
	\end{proposition}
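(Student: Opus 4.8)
The plan is to reduce the statement to the single-variable case $g=1$ (plus a Fubini-type argument to handle iteration) and then invoke the resolution result from \cite[Section~3.2]{dva} that $\flint^{P\in\Proj\cat{C}} P\otimes P^\vee$ is a projective resolution of $\mathbb{F}=\int^{X\in\cat{C}} X\otimes X^\vee$. The key conceptual input is that for a pivotal finite tensor category, the functor $\cat{C}(X,-\otimes Z)\colon \cat{C}\to\Vect$ is exact for any fixed $X,Z$ — indeed $-\otimes Z$ is exact in a finite tensor category, and $\cat{C}(X,-)$ is exact on the \emph{projective} objects because those are also injective (self-injectivity), which is exactly the range in which the homotopy coend lives.

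First I would treat $g=1$. Write the finite homotopy coend $\flint^{P\in\Proj\cat{C}} P\otimes P^\vee$ as the totalization of the explicit simplicial (or bar-type) object in $\Proj\cat{C}$ whose $n$-th term is $\bigoplus_{P_0,\dots,P_n} \cat{C}(P_0,P_1)\otimes\cdots\otimes\cat{C}(P_{n-1},P_n)\otimes P_n\otimes P_0^\vee$ — this is the model used in \cite{dva}. Applying the functor $\cat{C}(X,-)$ degreewise, and using that $\cat{C}(X,-)$ is additive and commutes with the finite direct sums appearing here, one sees that $\cat{C}\!\left(X,\flint^{P\in\Proj\cat{C}} P\otimes P^\vee\right)$ is computed by the totalization of the simplicial vector space with $n$-th term $\bigoplus_{P_0,\dots,P_n}\cat{C}(P_0,P_1)\otimes\cdots\otimes\cat{C}(P_{n-1},P_n)\otimes\cat{C}(X,P_n\otimes P_0^\vee)$. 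But this is precisely the bar model defining the homotopy coend $\lint^{P\in\Proj\cat{C}}\cat{C}(X,P\otimes P^\vee)$. The only subtlety is commuting $\cat{C}(X,-)$ past the totalization (an infinite limit over the simplicial direction); this is fine because the complex is bounded below and the relevant spectral sequence / the fact that $\cat{C}(X,-)$ is exact on projectives collapses any obstruction. This already gives the $g=1$ case as a \emph{canonical isomorphism} of the two bar-type models, not merely an equivalence.

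For general $g$, I would iterate: by the Fubini theorem for (finite) homotopy coends \cite[Proposition~2.7]{dva}, the $g$-fold homotopy coend on the left may be computed one variable at a time. Peeling off $P_g$ first and applying the $g=1$ analysis to the functor $\cat{C}(X, P_1\otimes P_1^\vee\otimes\cdots\otimes P_{g-1}\otimes P_{g-1}^\vee\otimes -\otimes -^\vee)$ — which is again of the form "$\cat{C}(X,-\otimes Z)$ composed with $-\otimes(-)^\vee$", hence handled by the same exactness argument — replaces the innermost $\lint^{P_g}$ by the tensor factor $\left(\flint^{P\in\Proj\cat{C}}P\otimes P^\vee\right)$ in the last slot. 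Here one uses that tensoring with the bounded-below complex $\flint^{P}P\otimes P^\vee$ preserves the relevant colimits and that each term remains projective (projectivity is absorbing under $\otimes$ in a finite tensor category). Repeating $g$ times, and then using associativity/exactness to pull all $g$ copies of $\flint^{P}P\otimes P^\vee$ into a single $g$-fold tensor power inside the Hom, yields the right-hand side. Each step is canonical, so the composite equivalence is canonical, and in fact one gets an honest zigzag of quasi-isomorphisms (or even isomorphisms of the chosen models).

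The main obstacle I anticipate is bookkeeping rather than conceptual: making the iteration in the last paragraph rigorous requires being careful that at each stage the "coefficient" functor into which one plugs the next homotopy coend is still one to which the exactness argument applies, and that the canonical isomorphisms from the $g=1$ case are compatible with the Fubini reshuffling so that the final identification is genuinely natural in $X$ and independent of the order in which variables are peeled off. This compatibility is of the same flavour as the unordered-homotopy-coend discussion on page~\pageref{pageincidences}, and I would handle it the same way — by phrasing everything as a colimit over the action groupoid $\cat{O}_C$ of orderings and checking the permutation coherence via the Fubini theorem.
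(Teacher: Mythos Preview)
Your $g=1$ argument conflates the two homotopy coends $\lint$ and $\flint$, and that conflation is exactly where the content lies. The prefix ``f'' in $\flint$ signals a \emph{finite} variant: the bar object you write, with a direct sum over all $P_0,\dots,P_n\in\Proj\cat{C}$, would have infinite direct sums in each degree and hence does not live in $\cat{C}$. Whatever finite model one takes for $\flint^{P} P\otimes P^\vee$ (a projective generator, or representatives of indecomposable projectives), applying $\cat{C}(X,-)$ degreewise yields the bar complex over that \emph{finite} index, not the one over all of $\Proj\cat{C}$ that defines $\lint^{P\in\Proj\cat{C}}\cat{C}(X,P\otimes P^\vee)$. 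The two are equivalent, but establishing that equivalence is precisely the point --- it is what \cite[Theorem~3.5]{dva} supplies, and the paper's $g=1$ argument invokes it (after first rewriting $\cat{C}(X,P\otimes P^\vee)\cong\cat{C}(X\otimes P,P)$ by duality, then undoing the duality afterwards). Separately, your exactness claim for $\cat{C}(X,-)$ is not right: for arbitrary $X$ this functor is only left exact, and self-injectivity of $\cat{C}$ does not change that.

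For the induction, your iteration forces a chain-complex-valued target $\flint^P P\otimes P^\vee$ through subsequent homotopy coends; you flag this as bookkeeping, but it is genuine extra work. The paper avoids it: in the step $g\to g+1$ it moves $P_{g+1}\otimes P_{g+1}^\vee$ to the source by duality, making the source projective; then \cite[Corollary~3.7]{dva} collapses $(\flint^P P\otimes P^\vee)^{\otimes g}$ to $\mathbb{F}^{\otimes g}$ (Hom out of a projective is exact), so the remaining homotopy coend is again vector-space-valued and the $g=1$ case applies directly. One then goes back from $\mathbb{F}^{\otimes g}$ to $(\flint^P P\otimes P^\vee)^{\otimes(g+1)}$ via \cite[Lemma~3.8]{dva}. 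This collapse-and-reinflate manoeuvre is what keeps every step in a setting where the $g=1$ result is literally applicable, rather than having to redo it for chain-complex targets.
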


Recall from our conventions that by \emph{(canonical) equivalence} we do not mean necessarily a map in either direction, but also allow a (canonical) zigzag.

\begin{proof}
	For $g=0$, the statement is true by the convention
	that a tensor product over an empty index set is the monoidal unit.
	We now prove the statement by induction on $g\ge 1$. 
	For $g=1$, we find by duality in $\cat{C}$ and \cite[Theorem~3.5]{dva}
	\begin{align}
	\lint^{P \in \Proj \cat{C}} \cat{C}(X,P\otimes P^\vee) &\cong 	\lint^{P \in \Proj \cat{C}} \cat{C}(X\otimes P,P)     \quad \text{(by duality)}  \\&\simeq \  \cat{C} \left(     I, \flint^{P\in \Proj \cat{C}} P \otimes (X\otimes P)^\vee  \right) \quad \text{(by \cite[Theorem~3.5]{dva})}\\&\cong\ \cat{C} \left(     I, \flint^{P\in \Proj \cat{C}} P \otimes P^\vee \otimes X^\vee \right) \quad \text{(since $(X\otimes P)^\vee \cong P^\vee \otimes X^\vee$)} \\&\cong\
	\cat{C} \left(     X, \flint^{P\in \Proj \cat{C}} P \otimes P^\vee  \right)    \quad \text{(by duality)} \ . 
	\end{align}
	This proves the statement for $g=1$. 
	
	In order to complete the induction step $g \to g+1$, we observe
	\begin{align}
	&	\lint^{P_1,\dots,P_{g+1} \in \Proj \cat{C}} \cat{C}(X,P_1\otimes P_1^\vee \otimes \dots \otimes P_{g+1} \otimes P_{g+1}^\vee) \\\cong\ &	\lint^{P_1,\dots,P_{g+1} \in \Proj \cat{C}} \cat{C}(X\otimes P_{g+1}\otimes P_{g+1}^\vee,P_1\otimes P_1^\vee \otimes \dots \otimes P_{g} \otimes P_{g}^\vee)   \quad \text{(by duality)} \\ \simeq &
	\lint^{P_{g+1} \in \Proj \cat{C}} \cat{C} \left(   X\otimes P_{g+1}\otimes P_{g+1}^\vee ,    \left(\flint^{P\in \Proj \cat{C}}   P\otimes P^\vee\right)  ^{\otimes g}    \right) \quad \text{(by the induction hypothesis)} \\
\simeq\ &	\lint^{P_{g+1} \in \Proj \cat{C}} \cat{C} \left(   X\otimes P_{g+1}\otimes P_{g+1}^\vee ,   \mathbb{F}  ^{\otimes g}    \right) \quad \substack{\displaystyle \text{(by \cite[Corollary~3.7]{dva}} \\ \displaystyle \text{ and since $X\otimes P_{g+1}\otimes P_{g+1}^\vee$ is projective)}}\\
\cong\ &   \lint^{P_{g+1} \in \Proj \cat{C}} \cat{C} \left(   \left(\mathbb{F}  ^{\otimes g}\right) ^\vee  \otimes X, P_{g+1}\otimes P_{g+1}^\vee        \right) \quad \text{(by duality)}     
\\
\simeq\ &    \cat{C} \left(   \left(\mathbb{F}  ^{\otimes g}\right) ^\vee  \otimes X, \flint^{P\in \Proj \cat{C}}   P\otimes P^\vee       \right) \quad \text{(by the induction start)}   
\\
\cong\ &    \cat{C} \left(   X, \mathbb{F}  ^{\otimes g} \otimes \flint^{P\in \Proj \cat{C}}   P\otimes P^\vee       \right) \quad \text{(by duality)} 
\\
\simeq\ &    \cat{C} \left(   X, \left(\flint^{P\in \Proj \cat{C}}   P\otimes P^\vee\right)  ^{\otimes (g+1)}      \right) \quad \text{(by \cite[Lemma~3.8]{dva}).}
	\end{align} 
	\end{proof}

\subparagraph{Unimodularity.}
Let $\cat{C}$ be a pivotal finite tensor category. Since $\cat{C}$ is assumed to be over an algebraically closed field, we have by 
\cite[Section~5.3]{mtrace} a so-called
\emph{modified trace} on the tensor ideal of projective objects that is unique up to invertible scalar.
 It gives us canonical 
 and natural isomorphisms $\cat{C}(X,P)\cong \cat{C}(\alpha \otimes P , X)^*$ for any $X\in\cat{C}$ as long as $P$ is projective, where $\alpha$ is the socle of the projective cover of the monoidal unit.  
 If $\cat{C}$ is unimodular, $\alpha$ is isomorphic to the monoidal unit. In this case, we even find isomorphisms \begin{align} \label{eqndualhom} \cat{C}(X,P)^*\cong \cat{C}(P,X)\ , \quad X\in\cat{C}\ , \quad P\in\Proj \cat{C} \ . \end{align} Therefore, we may conclude from Proposition~\ref{propoitcoend}:

\begin{corollary}\label{coritcoend}
	For any unimodular pivotal finite tensor category $\cat{C}$ and $X\in\cat{C}$, there is a canonical equivalence of chain complexes
	\begin{align}
	\lint^{P_1,\dots,P_g \in \Proj \cat{C}} \cat{C}(X,P_1\otimes P_1^\vee \otimes \dots \otimes P_g \otimes P_g^\vee) \simeq
	\cat{C} \left(  \left(  \flint^{P\in\Proj \cat{C}} P\otimes P^\vee  \right)^{\otimes g}  ,X  \right)^*  \ . 
	\end{align}
\end{corollary}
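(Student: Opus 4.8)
The plan is to deduce the corollary from Proposition~\ref{propoitcoend} by $k$-linearly dualizing its right-hand side with the help of the unimodularity isomorphism \eqref{eqndualhom}. Write $Q:=\flint^{P\in\Proj\cat{C}} P\otimes P^\vee$ for the finite homotopy coend, a chain complex concentrated in non-negative degrees. First I would record that $Q$ is degree-wise projective: by \cite[Section~3.2]{dva} each of its chain groups is a finite direct sum of objects of the form $P_0\otimes P_1^\vee$ with $P_i\in\Proj\cat{C}$, and in a finite tensor category the tensor product of a projective object with any object is again projective, as is any finite direct sum of projectives. The same reasoning shows that $Q^{\otimes g}$ is degree-wise projective, since tensor products and finite direct sums of projectives stay projective.

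Next I would invoke the unimodularity statement. From the existence of the modified trace on the tensor ideal of projectives together with unimodularity (so that $\alpha\cong I$), one has the natural isomorphism $\cat{C}(X,P)^*\cong\cat{C}(P,X)$ of \eqref{eqndualhom} for every $X\in\cat{C}$ and every projective $P$; since $\cat{C}$ is a finite category its morphism spaces are finite-dimensional, so dualizing once more yields the equivalent form $\cat{C}(X,P)\cong\cat{C}(P,X)^*$, again natural in $X$ and in morphisms between projective objects. Applying this degree-wise to the complex $Q^{\otimes g}$ — all of whose chain groups and differentials live among projectives — and using naturality in the projective variable to see that the degree-wise isomorphisms assemble into a chain map, I would obtain an isomorphism of chain complexes
\[
\cat{C}\!\left(X,\ Q^{\otimes g}\right)\ \cong\ \cat{C}\!\left(Q^{\otimes g},\ X\right)^* .
\]
Here one keeps in mind that $\cat{C}(-,X)$ is contravariant, so it turns the non-negatively graded complex $Q^{\otimes g}$ into a cochain complex, and taking the $k$-linear dual $(-)^*$ re-indexes it back into non-negative degrees, matching the grading of $\cat{C}(X,Q^{\otimes g})$.

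Finally I would compose this with the canonical equivalence of Proposition~\ref{propoitcoend},
\[
\lint^{P_1,\dots,P_g\in\Proj\cat{C}} \cat{C}(X,P_1\otimes P_1^\vee\otimes\dots\otimes P_g\otimes P_g^\vee)\ \simeq\ \cat{C}\!\left(X,\ Q^{\otimes g}\right),
\]
to get the asserted zigzag equivalence. The only genuinely delicate point is the bookkeeping in the dualization of chain complexes: one has to check that the degree-wise isomorphisms coming from \eqref{eqndualhom} are compatible with the differentials — this is exactly naturality of the modified-trace pairing in the projective argument, applied to the internal differentials of $Q^{\otimes g}$ — and one must keep the signs and the re-indexing under $(-)^*$ and under $\cat{C}(-,X)$ consistent with the conventions fixed in the paper (a harmless Koszul sign twist may appear in the dual differential). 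Everything else is an immediate consequence of results already established, namely Proposition~\ref{propoitcoend} and the unimodularity isomorphism.
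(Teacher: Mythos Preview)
Your proposal is correct and follows exactly the route the paper takes: the paper's proof is literally the one-line remark ``Therefore, we may conclude from Proposition~\ref{propoitcoend}'', meaning one applies Proposition~\ref{propoitcoend} and then the unimodularity isomorphism \eqref{eqndualhom} degree-wise to the degree-wise projective complex $Q^{\otimes g}$. You have simply spelled out the details (degree-wise projectivity of $Q^{\otimes g}$, naturality in the projective variable, grading bookkeeping under dualization) that the paper leaves implicit.
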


\begin{example}\label{exblock}
	For $p,q\ge 0$ and $g\ge 1$, consider the standard marked sphere $\mathbb{S}_{p+q+2g,\underline{\varepsilon}}^\circ$ such that the first $p$ holes are incoming, the next $q$ holes are outgoing, and the last $2g$ holes are pairs of an outgoing followed by an incoming hole.
	By gluing together these $g$ pairs, we obtain a marking $\Gamma_{g,p,q}^\circ$ on the surface $\Sigma_g^{p|q}$ of genus $g$ with $p$ incoming and $q$ outgoing boundary components. If $\cat{C}$ is a  pivotal finite tensor category and $\underline{X}=(X_1', \dots,X_p',X_1'',\dots,X_q'')$ a projective boundary label for $\Sigma_g^{p|q}$ (the first $p$ labels are for the incoming boundary components, the last $q$ labels for the outgoing ones), then by the definition of marked blocks and duality (recall that left and right duals coincide thanks to pivotality), we have an isomorphism
	\begin{align}
	\block_\cat{C}^{\Sigma_g^{p|q},\Gamma_{g,p,q}^\circ}(\underline{X})\cong \lint^{P_1,\dots,P_g \in \Proj \cat{C}} \cat{C}(X_p'\otimes\dots\otimes X_1', X_1''\otimes\dots\otimes X_q''\otimes  P_1\otimes P_1^\vee \otimes \dots \otimes P_g \otimes P_g^\vee) \ .
	\end{align}
	From Proposition~\ref{propoitcoend}, we conclude
		\begin{align}
	\block_\cat{C}^{\Sigma_g^{p|q},\Gamma_{g,p,q}^\circ}(\underline{X})\simeq 
	\cat{C}\left(X_p'\otimes\dots\otimes X_1',X_1''\otimes\dots\otimes X_q''\otimes   \left(\flint^{P\in \Proj \cat{C}}   P\otimes P^\vee\right)  ^{\otimes g}  \right) \ . \label{eqnexblock1} 
	\end{align} 
	If $\cat{C}$ is unimodular, 
	 we arrive at
	\begin{align}
	\block_\cat{C}^{\Sigma_g^{p|q},\Gamma_{g,p,q}^\circ}(\underline{X})\simeq \cat{C} \left( X_1''\otimes\dots\otimes X_q''\otimes  \left(   \flint^{P\in\Proj \cat{C}} P\otimes P^\vee  \right)^{\otimes g}  ,X_p'\otimes\dots\otimes X_1'  \right)^*  \label{eqnexblock2} 
	\end{align}
thanks to	Corollary~\ref{coritcoend}.
	\end{example}

\subsection{The augmentation fibration\label{secaugmentationfibration}}
We will now relate marked blocks to the `classical' marked blocks with values in vector spaces from \cite{jfcs} which are based on \cite{lubacmp}. First we observe that the 
 marked block $\block_\cat{C}^{\Sigma,\Gamma} (\underline{X})$
 for a marked surface $(\Sigma,\Gamma)$ with projective boundary label $\underline{X}$
comes with a canonical surjection
\begin{align}\block_\cat{C}^{\Sigma,\Gamma} (\underline{X})\to H_0 \block_\cat{C}^{\Sigma,\Gamma} (\underline{X}) \label{augmentationfibration}
\end{align} to its zeroth homology
that we refer to as the \emph{augmentation fibration}.
The zeroth homology of the marked block $\block_\cat{C}^{\Sigma,\Gamma} (\underline{X})$
is given  the same expression as in \eqref{eqnderivedconfblock}, but with the homotopy coend replaced by an ordinary coend. It is important that both types of coends run over the subcategory of projective objects.

\subparagraph{Marked blocks with values in vector spaces.}
In \cite[Section~2.4]{jfcs} marked blocks 
are defined based on \cite{lubacmp,lubalex}
as coends in categories of left-exact functors. For genus zero surfaces with boundary, the definition agrees with~\eqref{eqndmfoncyl}, but without the restriction that the boundary labels need to be projective. Instead of homotopy coends over categories of projective objects that are used for the gluing of marked blocks in~\eqref{eqnderivedconfblock}, left-exact coends are used.
This definition can be made for any pivotal finite tensor category $\cat{C}$ (for the construction of mapping class group actions, more structure is needed).
The value of the these block functors on a given boundary label is a vector space and will denoted by $\ordblock_\cat{C}^{\Sigma,\Gamma} (\underline{X})$; in \cite{jfcs} the notation $\widetilde{\text{Bl}}_{\Sigma,\Gamma}$ is used for the block functors.
We now arrive at the following comparison (for the moment, this is a comparison at the level of chain complexes and vector spaces, respectively --- no additional structure is included):

\begin{proposition}[Augmentation fibration in the case of non-empty boundary]\label{proptrivfibbdy}
	Let $\cat{C}$ be a pivotal finite tensor category.
	If $(\Sigma,\Gamma)$ is a  marked surface with at least one boundary  component on each connected component, then the augmentation fibration \eqref{augmentationfibration}
	is a trivial fibration for every projective boundary label $\underline{X}$, i.e.\ in this case, the marked block has non-trivial homology only in degree zero.
	Moreover, this zeroth homology is canonically isomorphic to the vector spaces valued marked block $\ordblock_\cat{C}^{\Sigma,\Gamma} (\underline{X})$ such that the augmentation fibration takes the form of a trivial fibration
	\begin{align}\block_\cat{C}^{\Sigma,\Gamma} (\underline{X})\ra{\simeq}\ordblock_\cat{C}^{\Sigma,\Gamma} (\underline{X}) \ . \label{eqnaugmentionfibrationeqn}
	\end{align}
\end{proposition}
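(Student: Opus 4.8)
The statement decomposes into two parts: (1) the augmentation fibration \eqref{augmentationfibration} is a trivial fibration when every connected component has a boundary component, and (2) the zeroth homology is canonically $\ordblock_\cat{C}^{\Sigma,\Gamma}(\underline{X})$. Since the marked block and its zeroth homology are both built by tensoring over connected components, and a tensor product of trivial fibrations is a trivial fibration, I would reduce immediately to the case where $(\Sigma,\Gamma)$ is connected with at least one boundary component. The plan is then to choose a convenient marking representative on $\Sigma$. Because $\Sigma$ is connected with nonempty boundary, I would pick a cut system and chart data so that $(\Sigma,\Gamma)$ is equivalent to a marking of the form $\Gamma_{g,p,q}^\circ$ from Example~\ref{exblock}, with $p+q\ge 1$; the independence of $\block_\cat{C}^{\Sigma,\Gamma}(\underline{X})$ on the marking representative (established on page~\pageref{pageincidences}) means nothing is lost. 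Then \eqref{eqnexblock1} of Example~\ref{exblock} identifies $\block_\cat{C}^{\Sigma,\Gamma}(\underline{X})$ up to canonical equivalence with the mapping space $\cat{C}\!\left(X_p'\otimes\cdots\otimes X_1',\,X_1''\otimes\cdots\otimes X_q''\otimes(\flint^{P}P\otimes P^\vee)^{\otimes g}\right)$.

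The crucial input is that $\flint^{P\in\Proj\cat{C}}P\otimes P^\vee$ is a projective resolution of the Lyubashenko coend $\mathbb{F}$ (recalled from \cite[Section~3.2]{dva} just before Proposition~\ref{propoitcoend}), so $\left(\flint^{P}P\otimes P^\vee\right)^{\otimes g}$ is — since tensor products of projectives are projective and tensoring is exact in a finite tensor category — a projective resolution of $\mathbb{F}^{\otimes g}$. The decisive point is that $\cat{C}(Z,-)$ sends a quasi-isomorphism of bounded-below complexes of projectives to a quasi-isomorphism precisely when it is applied appropriately: here, because $X_1''\otimes\cdots\otimes X_q''\otimes(-)$ is an exact functor landing in projectives when at least one tensor factor (or the argument) is projective, the complex $X_1''\otimes\cdots\otimes X_q''\otimes\left(\flint^{P}P\otimes P^\vee\right)^{\otimes g}$ is itself a complex of projectives, bounded below and with homology concentrated in degree zero equal to $X_1''\otimes\cdots\otimes X_q''\otimes\mathbb{F}^{\otimes g}$; hence it is a projective resolution thereof. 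Applying the hom functor $\cat{C}(X_p'\otimes\cdots\otimes X_1',-)$ out of an object of $\cat{C}$ then commutes with the relevant homotopies, so the resulting chain complex has homology concentrated in degree zero. This shows the marked block is equivalent to a complex concentrated in degree zero, and hence the augmentation fibration — a degreewise surjection onto $H_0$ of a complex that is already formal in this sense — is a trivial fibration (a quasi-isomorphism that is a degreewise surjection, which is what "trivial fibration" means in the projective model structure on $\Ch$ described in the Conventions).

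For part (2), I would compare with the vector-space-valued block $\ordblock_\cat{C}^{\Sigma,\Gamma}(\underline{X})$ of \cite[Section~2.4]{jfcs}, which is defined by the same recipe as \eqref{eqnderivedconfblock} but with ordinary coends in place of homotopy coends, both over $\Proj\cat{C}$. Since the homotopy coend and the ordinary coend over $\Proj\cat{C}$ are related by the comparison map on which the augmentation fibration is modeled, $H_0$ of the homotopy coend presentation is by construction the ordinary coend presentation; the content is merely that the ordinary-coend version of \eqref{eqnderivedconfblock} coincides with the coend-in-left-exact-functors definition of $\ordblock_\cat{C}^{\Sigma,\Gamma}(\underline{X})$, which is where I would invoke the identification of $\mathbb{F}=\int^X X\otimes X^\vee$ with the corresponding object in the left-exact-functor picture from \cite{lubalex} (equivalently, pass through Example~\ref{exblock}: $H_0$ of \eqref{eqnexblock1} is $\cat{C}(X_p'\otimes\cdots\otimes X_1',X_1''\otimes\cdots\otimes X_q''\otimes\mathbb{F}^{\otimes g})$, which is exactly the closed formula for $\ordblock$ in that case). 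Combining, the augmentation fibration takes the form \eqref{eqnaugmentionfibrationeqn}.

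The main obstacle I anticipate is not the homological algebra — that is routine given the projective-resolution property of $\flint^{P}P\otimes P^\vee$ — but rather the bookkeeping needed to justify reducing an arbitrary connected marked surface with boundary to the standard marking $\Gamma_{g,p,q}^\circ$ of Example~\ref{exblock}, and making fully precise that $H_0$ of the homotopy-coend definition agrees on the nose (not just up to equivalence) with the left-exact-functor coend defining $\ordblock_\cat{C}^{\Sigma,\Gamma}$. The latter is the genuinely delicate comparison, since it requires matching the concrete model for homotopy coends over $\Proj\cat{C}$ used in \cite{dva} with the coend-in-left-exact-functors formalism of \cite{lubalex,jfcs}; I would handle it by checking it first for a single sphere with holes (where both sides are a plain hom space) and then propagating along the gluing, using that $H_0$ commutes with the colimits involved because all coends are over the projectives and $\cat{C}(-,-)$ is exact on projectives.
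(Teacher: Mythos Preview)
Your homological core is correct and matches the paper's: the decisive input is that $\flint^{P}P\otimes P^\vee\ra{\simeq}\mathbb{F}$ is a projective resolution, so once the marked block is brought into the form $\cat{C}\!\left(\underline{X}^\otimes, (\flint^P P\otimes P^\vee)^{\otimes g}\right)$ with $\underline{X}^\otimes$ projective (which uses $n\ge 1$), exactness of $\cat{C}(\underline{X}^\otimes,-)$ finishes part~(1).

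The gap is your reduction step. You claim you can replace the given marking $\Gamma$ by the standard $\Gamma_{g,p,q}^\circ$, citing the independence ``on the marking representative'' from page~\pageref{pageincidences}. That passage says only that the marked block is well-defined on the \emph{isotopy class} of a fixed marking; it does \emph{not} say that different markings (different cut systems, different graphs) give the same complex. Independence of the marking is precisely what the paper establishes \emph{later} (Corollary~\ref{corblockmfc}), and that argument uses Proposition~\ref{proptrivfibbdy} via Proposition~\ref{propcolorconfblocks}, so invoking it here would be circular. What you call ``bookkeeping'' is therefore the main technical content of the proof.

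The paper handles an arbitrary marking directly. It constructs a zigzag of equivalences
\begin{align}
\block_\cat{C}^{\Sigma,\Gamma}(\underline{X}) \ \longleftrightarrow\ \cat{C}\!\left(\underline{X}^\otimes,\left(\flint^{P} P\otimes P^\vee\right)^{\otimes g}\right)
\end{align}
by mimicking, step for step, the sequence of operations behind the classical isomorphism $\ordblock_\cat{C}^{\Sigma,\Gamma}(\underline{X})\cong\cat{C}(\underline{X}^\otimes,\mathbb{F}^{\otimes g})$ from \cite[(2.12)]{jfcs}, but replacing the ordinary Yoneda step \cite[(2.2)]{jfcs} by the homotopy Yoneda equivalence
\begin{align}
\lint^{P\in\Proj\cat{C}}\cat{C}(X,P)\otimes\cat{C}(P,Y)\ \simeq\ \cat{C}(X,Y)\quad\text{for $X$ or $Y$ projective}
\end{align}
from \cite[Lemma~4.11]{dva}, and replacing \cite[(2.4)]{jfcs} by Proposition~\ref{propoitcoend}. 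This Lemma~4.11 is exactly the tool you are missing: it collapses the cuts of an \emph{arbitrary} cut system one by one without ever changing the marking. For part~(2), the paper then takes $H_0$ of this parallel construction, obtaining a commutative square whose three known edges force the canonical isomorphism $H_0\block_\cat{C}^{\Sigma,\Gamma}(\underline{X})\cong\ordblock_\cat{C}^{\Sigma,\Gamma}(\underline{X})$; this also resolves the delicacy you correctly flag about matching the ordinary-coend-over-projectives model with the coend-in-left-exact-functors definition of $\ordblock$.
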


\begin{proof} 
	Using the key homological algebra result from Proposition~\ref{propoitcoend} that relates 
	homotopy coends over projective objects to morphisms into a resolution of the Lyubashenko coend,
	the result follows from a careful comparison of definitions.
	
	Let us make this precise:
	Without loss of generality, we may assume that $\Sigma$ is a connected surface of genus $g$ with $n\ge 1$ boundary components that are all incoming.
Consider now the diagram (whose ingredients we will explain step by step):

\footnotesize 
	\begin{equation}\label{eqnaugmentation}
\begin{tikzcd}
\block_\cat{C}^{\Sigma,\Gamma}(\underline{X})  \ar[<->]{rr}{} & & \cat{C}\left(\underline{X}^\otimes,  \left(   \flint^{P\in\Proj \cat{C}} P\otimes P^\vee    \right)^{\otimes g}   \right) \ar[swap]{dd}{\substack{\text{induced by}\\  \text{ $ \flint^{P\in\Proj \cat{C}} P\otimes P^\vee\ra{\simeq}\mathbb{F}$  }}}  \\
& & \\
\ordblock_\cat{C}^{\Sigma,\Gamma}(\underline{X}) \ar{rr}{  \text{\cite[(2.12)]{jfcs}}  }   & & \cat{C}(\underline{X}^{\otimes}, \mathbb{F}^{\otimes g} )   
\end{tikzcd} \ra{H_0}	\begin{tikzcd}
H_0\block_\cat{C}^{\Sigma,\Gamma}(\underline{X}) \ar[swap,dashed]{dd}{} \ar[->]{rr}{\cong} & & H_0\cat{C}\left(\underline{X}^\otimes,  \left(   \flint^{P\in\Proj \cat{C}} P\otimes P^\vee    \right)^{\otimes g}   \right) \ar{dd}{\cong}  \\
& & \\
\ordblock_\cat{C}^{\Sigma,\Gamma}(\underline{X})    & & \cat{C}(\underline{X}^{\otimes}, \mathbb{F}^{\otimes g} ) \ar{ll}{ \cong  } 
\end{tikzcd} 
\end{equation} \normalsize
We first describe the left diagram:
The lower horizontal map is the canonical isomorphism \cite[(2.12)]{jfcs} from the vector spaces valued marked block to the morphism space $\cat{C}(\underline{X}^{\otimes}, \mathbb{F}^{\otimes g} )$, where $\underline{X}^{\otimes}=X_{\sigma(1)} \otimes \dots \otimes X_{\sigma(n)}$ for some permutation $\sigma$ on $n$ letters
(that is fixed by our definition of marked blocks, but not relevant for the argument because it is the same regardless of whether we consider blocks with values in vector spaces or chain complexes). 
This isomorphism is a specific sequence coming from the application of the Yoneda Lemma in the form \cite[(2.2)]{jfcs}
and the relation \cite[(2.4)]{jfcs} between the coend in left-exact functors and the Lyubashenko coend. 
The upper horizontal double-headed arrow in the left diagram is a zigzag that is obtained by performing the same sequence of operations on marked blocks, but with the following replacements for
\cite[(2.2)]{jfcs} and \cite[(2.4)]{jfcs}:
\begin{itemize}
	\item Instead of \cite[(2.2)]{jfcs}, we use the canonical equivalence
	\begin{align*}
\lint^{P\in\Proj \cat{C}} \cat{C}(X,P)\otimes\cat{C}(P,Y) \simeq \cat{C}(X,Y) 
\end{align*}
from  \cite[Lemma~4.11]{dva} that holds for objects $X$ and $Y$ in $\cat{C}$ if either $X$ or $Y$ is projective
(in order to apply this, it is crucial that $\underline{X}^{\otimes}$ is projective, which uses $n\ge 1$; for $n=0$, the object $\underline{X}^{\otimes}=I$ would not be necessarily projective --- in fact, it is projective if and only if $\cat{C}$ is semisimple).

\item Instead of \cite[(2.4)]{jfcs}, we use Proposition~\ref{propoitcoend}. 

\end{itemize}
Therefore,  the upper horizontal arrow in the left diagram is a zigzag of equivalences.
Moreover, since $\underline{X}^{\otimes}$ is projective, $\cat{C}(\underline{X}^{\otimes},-)$ is exact, which means that the right vertical map in the left diagram is an equivalence (again, this only holds because of $n\ge 1$). This implies that  the augmentation fibration
$\block_\cat{C}^{\Sigma,\Gamma} (\underline{X})\to H_0 \block_\cat{C}^{\Sigma,\Gamma} (\underline{X})$
is an equivalence.

It remains to exhibit a canonical isomorphism $H_0 \block_\cat{C}^{\Sigma,\Gamma} (\underline{X})\cong \ordblock_\cat{C}^{\Sigma,\Gamma}(\underline{X})$.
To this end, we take the zeroth homology of the left diagram which gives us the right diagram. Note that we invert the lower horizontal map. Now we define the dashed isomorphism such that the right square commutes.	\end{proof}

	\begin{remark}
The triviality of higher homologies for marked blocks
for surfaces with \emph{non-empty} boundary comes here from the fact that we only allow 
projective boundary labels (which we do to give a technically more uniform treatment). 
Non-projective boundary labels can be considered in a straightforward way once the modular functor is constructed as we will explain in Remark~\ref{remproj}. 
	\end{remark}

\spaceplease
\section{The main result}
In this section
 we state our main result that a (not necessarily semisimple) modular category gives rise to a \emph{modular functor with values in chain complexes}
(we will give the precise notion in Definition~\ref{predefdmf} and \ref{defdmf} below)
and that we can compute this modular functor in terms of marked blocks.
The proof of this result will occupy the rest of the article. 

\subsection{The category of surfaces and its central extension\label{secsurfacecat}}
The notion of a  modular functor uses the \emph{category of extended surfaces}
 $\Surf$ and a certain central extension of it, see \cite[Section~3]{gilmermasbaum} and 
 additionally \cite{hatcher,baki,segal,turaev} for more background. 
 
 Let us first define $\Surf$: 
 Objects are extended surfaces as defined in Section~\ref{secsurfaceterminology}. Morphisms are generated by mapping classes 
(isotopy classes of orientation-preserving diffeomorphisms  mapping marked points to marked points)
 and sewings  of surfaces that glue one or several pairs of incoming and outgoing boundary components together. These generators are subject to the obvious relations for the composition of mapping classes and sewings separately and the following mixed relation involving both mapping classes and sewings: If $\phi : \Sigma_0 \to \Sigma_1$ is a mapping class and $s : \Sigma_0 \to  \Sigma_0'$ a sewing, then
$\phi$ induces a sewing $s' : \Sigma_1 \to  \Sigma_1'$ of $\Sigma_1$, and $s$ induces a mapping class $  \phi' :  \Sigma_0'\to\Sigma_1'$\label{refsewnmappingclass}. The morphisms are subject to
\begin{align}
s'\phi= \phi' s \ .  \label{eqnmixedrelation}\end{align}
Disjoint union makes $\Surf$ into a symmetric monoidal category.

\subparagraph{Central extensions.}\label{seccentralextensions}
The group  $\Map(\Sigma):=\Aut_{\Surf}(\Sigma)$ is the mapping class group of the extended surface $\Sigma$.
Hence, functors out of the category $\Surf$ of surfaces describe in particular mapping class group representations. In order to describe certain \emph{projective} mapping class group actions 
 relevant in the theory of modular functors, one may use the central extension  
 $\Surfc$ of $\Surf$ discussed in \cite[Section~3]{gilmermasbaum}. Here $\Surfc$ is a certain category with the same objects as $\Surfc$ which comes with a functor
$P:\Surfc \to \Surf$ inducing for every extended surface $\Sigma$ a short exact sequence
\begin{align*} 0 \to \mathbb{Z} \to \Aut_{\Surfc}(\Sigma) \to \Aut_\Surf (\Sigma) \to 0\end{align*}
of groups.
Roughly, one obtains $\Surfc$ from $\Surf$ by introducing for each extended surface $\Sigma$ an additional generator $\C_\Sigma$ which commutes with all mapping classes and which 
behaves multiplicatively under sewing, i.e.\ for any sewing $s : \Sigma \sqcup \Sigma' \to   \Sigma''$, the relation
\begin{align}
s (\C_\Sigma^{p} \sqcup \C_{\Sigma'}^q)=\C_{\Sigma''}^{p+q}s \quad \text{for}\quad p,q \in \mathbb{Z}\ \label{eqnmultiplicatively}
\end{align}
holds.
For details, we refer to \cite{gilmermasbaum}, moreover \cite[Section~3.2]{jfcs}
and also Remark~\ref{remframinganomaly} and \ref{anomalouscase}, where we discuss the origin of the projectiveness and give a concrete model for the central extension in \eqref{eqnmodelcentralextension} (that one could also take as a definition of $\Surfc$). 
The  functor $P:\Surfc \to \Surf$  sends the central generators of $\Surfc$ to identities.

\subparagraph{Labeled surfaces.}
For any set  $\mathfrak{X}$ (to be thought of as \emph{label set}), we can define the category $\mathfrak{X}\text{-}\Surf$ whose objects are extended surfaces with boundary components  labeled by
elements of $\mathfrak{X}$.
Morphisms are again given by mapping classes (acting in the obvious way on the labels)
and sewings with the restriction that a sewing of an incoming to an outgoing boundary component is only allowed if the labeling objects coincide. The label for the sewn surface is obtained by omitting the labels of the boundary components that are sewn together.  
Disjoint union provides again a symmetric monoidal structure. 
Of course, we also obtain a labeled version $\mathfrak{X}\text{-}\Surfc$
of the central extension $\Surfc$.

\subsection{The notion of a modular functor\label{secdefdmf}}
All  definitions for the notion of a modular functor
that abound in the literature have in common  that a
  modular functor should provide a consistent system of (projective) mapping class group representations  in vector spaces compatible with the gluing of surfaces.

We will now lift this structure to a differential graded framework: Vector spaces will be replaced with chain complexes, the projective mapping class group actions will not necessarily be strict, but possibly up to coherent homotopy, also  the gluing properties have to be formulated homotopy coherently.

Let $\mathfrak{X}$ be a label set and $M : \mathfrak{X}\text{-}\Surfc \to \Ch$  a symmetric monoidal functor
whose value on an extended surface $\Sigma$ with label $\underline{X}=(X_1,\dots,X_n)$ for the $n$ boundary components we denote by $M(\Sigma,\underline{X})$.
For the moment, this functor is assumed to be strict, but see Remark~\ref{remhomotopycoherent}.

\begin{remark}[Cylinder category and functorial dependence on boundary label]\label{cylcatfunctorialdeprem}
	To the cylinder with incoming boundary label $X \in \mathfrak{X}$ and outgoing boundary label $Y \in \mathfrak{X}$,  a symmetric monoidal functor $M : \mathfrak{X}\text{-}\Surfc \to \Ch$ assigns a chain complex $M(\mathbb{S}^1\times [0,1],(X,Y))$. Since the sewing of cylinders is associative, 
	$\mathfrak{X}$ becomes the set of objects for a (not necessarily unital) differential graded category that we  denote by $\mathcal{Z}_M$ and refer to as the \emph{cylinder category of $M$}.
		We can now deduce a functorial dependence of the values of $M$ on cylinder category:
	Let $\Sigma$ be an extended surface with one outgoing boundary component (more boundary components can be treated in the same way). Then $M(\Sigma,-) : \cat{Z}_M\to\Ch$ is a functor on the cylinder category.  In fact, for $X,Y \in \mathfrak{X}$, sewing a cylinder to $\Sigma$ yields a map
	\begin{align}
	\cat{Z}_M(X,Y) \otimes M(\Sigma,X)=M(\mathbb{S}^1 \times [0,1] , (X,Y)) \otimes M(\Sigma,X)\to M(\Sigma \cup_{\mathbb{S}^1} (  \mathbb{S}^1 \times [0,1]  ),Y)\cong M(\Sigma,Y) \ . 
	\end{align}
	\end{remark}

 Let $s : \Sigma \to  \Sigma'$ be a sewing that glues an incoming to an outgoing boundary component. Then for any label $\underline{X}$ of the remaining boundary components, $M(\Sigma,(\underline{X},-,-))$ provides a functor $\cat{Z}_M^\op \otimes \cat{Z}_M \to \Ch$ (by the explanations in Remark~\ref{cylcatfunctorialdeprem}), i.e.\ an $\cat{Z}_M$-bimodule. 
	Evaluation of $M$ on $s$ induces a map
	\begin{align}
	\lint^{P \in \cat{Z}_M} M(\Sigma,(\underline{X},P,P)) \to M( \Sigma', \underline{X}) \ . \label{eqnexcisionmap1}
	\end{align}
	(Note that in \cite{dva} homotopy coends over differential graded categories are defined, but the above homotopy coend runs over \emph{a not necessarily unital} differential graded category. In that case, the homotopy coend can still be defined as the realization of a \emph{semisimplicial} object
	instead of a simplicial one.)
	
	We say that the symmetric monoidal functor $M:\mathfrak{X}\text{-}\Surfc \to \Ch$ satisfies
	\emph{excision} if \eqref{eqnexcisionmap1} is an equivalence.

\begin{definition}\label{predefdmf}
	For a set $\mathfrak{X}$, we call a 
	 symmetric monoidal functor $M:\mathfrak{X}\text{-}\Surfc \to \Ch$ a \emph{modular functor (with values in chain complexes)}
	if 
	  $M$ satisfies excision. 
	\end{definition}

	\begin{remark}\label{remhomotopycoherent}
	As mentioned above, one can relax the definition in the sense  that one requires this functor not to be strict, but just homotopy coherent. 
	Technically, one would accomplish this by passing from $\cat{C}\text{-}\Surfc$ to a resolution, see \cite{riehl18} for the necessary techniques.
	For the concrete constructions in this article, however, this will not be necessary because the modular functor that we will build will be strict. Nonetheless, it will lead to non-strict actions on certain Hochschild complexes because these will be obtained by transfer along an equivalence.
	\end{remark}

	In practice, a modular functor is constructed from a certain linear category (with a lot more structure and properties)
	that is recovered by evaluation of the modular functor on the cylinder leading to a notion of a modular functor \emph{for a given linear category}. We will take this into account in our definitions as follows: For a linear category $\cat{A}$, we set $\cat{A}\text{-}\Surfc := (\Proj \cat{A})_0 \text{-}\Surfc$, where $(\Proj \cat{A})_0$ is the set of objects of the subcategory $\Proj \cat{A} \subset \cat{A}$ of projective objects. 
	
	\begin{definition}\label{defdmf}
	Let $\cat{A}$ be a linear category.
		We call a modular functor $M:\cat{A}\text{-}\Surfc \to \Ch$ a \emph{modular functor for $\cat{A}$} if the cylinder category $\cat{Z}_M$ of $M$ is equivalent to $\Proj \cat{A}$. 
		\end{definition}
	
\begin{remark}[Modular functors with values in vector spaces]\label{remunderived}
	Replacing in the above Definitions~\ref{predefdmf} and~\ref{defdmf} the category of chain complexes
	with the category of vector spaces, we obtain the definition of an  modular functor with values in vector spaces (we should say `a' definition because other definitions might be used in different contexts; our definition is close to \cite{tillmann}). It is then clear that the zeroth homology of a modular functor with values in chain complexes is an modular functor with values in vector spaces.
	\end{remark}

\spaceplease
\subsection{The modular functor of a non-semisimple modular category\label{secdmfmain}}
Before stating our main result that a (not necessarily semisimple) modular category gives rise to a modular functor
with values in chain complexes, we recall some terminology.

\subparagraph{Ribbon categories, non-degeneracy and modularity.}
A \emph{ribbon structure} (also called \emph{ribbon twist}) on a  finite tensor category $\cat{C}$ with braiding $c$ is a natural automorphism of the identity whose components $\theta_X : X \to X$ satisfy the conditions
\begin{align*}
\theta_{X\otimes Y} &= c_{Y,X}c_{X,Y} (\theta_X \otimes \theta_Y) \ , \\
\theta_I &=\id_I \ , \\
\theta_{X^\vee} &=\theta_X^\vee
\end{align*} for all $X,Y \in \cat{C}$. 
The first two conditions say precisely that the braided monoidal structure and the natural automorphism $\theta$
form an algebra over the framed little disk operad \cite{salvatorewahl}; the last condition requires an additional compatibility with duality. 
A \emph{finite ribbon category} is a braided finite tensor category with the choice of a ribbon structure. Recall that any ribbon structure induces a pivotal structure. 
If we are given a braided finite tensor category $\cat{C}$, we can define the \emph{Müger center} as the full subcategory of $\cat{C}$ spanned by the transparent objects, i.e.\ those objects $X\in \cat{C}$ such that $c_{Y,X}c_{X,Y}=\id_{X\otimes Y}$ for all $Y\in\cat{C}$. 
We call the braiding (and then also the braided finite tensor category $\cat{C}$) \emph{non-degenerate} if the Müger center is trivial, i.e.\ generated by the monoidal unit under finite direct sums. 
A \emph{modular  category}
is a finite ribbon category with non-degenerate braiding. Note that, in our terminology, modularity does \emph{not} include semisimplicity (but still finiteness assumptions).
Semisimple modular categories are a standard object in quantum topology and known as the input datum for the Reshetikhin-Turaev construction \cite{rt1,rt2,turaev}.
For non-semisimple modular categories, there were for a long time different definitions
of the non-degeneracy of the braiding
which were  proven to be equivalent by Shimizu 
\cite{shimizumodular} only recently --- the one given above in terms of the Müger center is one of possible definitions of non-degeneracy.\myskip

We may now state our main result:

\begin{theorem}[Main Theorem]\label{thmmain}
	Any modular category $\cat{C}$ gives rise in a canonical way to a modular functor 
	\begin{align}
	\mfc\ :\ \cat{C}\text{-}\Surfc\to\Ch \label{eqnmfcfinal}
	\end{align}
	with values in chain complexes.
	This modular functor can be computed by the marked blocks from Section~\ref{derivedconformalblockssec} as follows: After the choice of a marking $\Gamma$ on $\Sigma$, there is a canonical equivalence
	\begin{align}
\block^{\Sigma,\Gamma}_\cat{C}(\underline{X}) \ra{\simeq}	\mfc(\Sigma,\underline{X})  \  \label{eqnagreementconfblock3}
	\end{align}
	of chain complexes.
\end{theorem}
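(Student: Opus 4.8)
The plan is to build the modular functor $\mfc$ not directly on the surface category $\cat{C}\text{-}\Surfc$, but on an auxiliary category whose objects remember both a surface \emph{and} a colored marking on it, and then to produce $\mfc$ by a homotopy left Kan extension along the forgetful functor to $\cat{C}\text{-}\Surfc$. Concretely, first I would assemble, for each extended surface $\Sigma$, the category $\Mhat(\Sigma)$ of colored markings (markings together with a nonempty set of colored cuts per closed component, plus uncoloring morphisms), and promote the marked blocks of Section~\ref{derivedconformalblockssec} to a functor $\Mhat(\Sigma)\to\Ch$ (this is Theorem~\ref{thmfunctormhat}): a colored marking is sent to a mixed version of the marked block using homotopy coends at colored cuts and ordinary coends at uncolored cuts, which makes sense because of the Fubini-type manipulations available for iterated (homotopy) coends. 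The key input is Corollary~\ref{coruncoloringmaps}: uncoloring a cut (as long as one colored cut per closed component survives) does not change the block up to equivalence, so $\Mhat(\Sigma)\to\Ch$ sends all uncolorings to equivalences and therefore descends to the $\infty$-groupoid obtained by localizing $\Mhat(\Sigma)$ at uncolorings.

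\textbf{Globalizing over surfaces.} Next I would glue these categories $\Mhat(\Sigma)$ together as $\Sigma$ varies over $\cat{C}\text{-}\Surfc$, using the Grothendieck construction, to obtain a single category $\CMhat$ lying over $\cat{C}\text{-}\Surfc$ via a functor $\CMhat\to\cat{C}\text{-}\Surfc$ whose fiber over $\Sigma$ is (essentially) $\Mhat(\Sigma)$. One checks that the marked-block functors organize into a symmetric monoidal functor $\CMhat\to\Ch$: monoidality comes from the fact that both markings and marked blocks are defined componentwise under disjoint union, and functoriality in mapping classes and sewings is exactly the content recorded in Section~\ref{secsurfaceterminology} and the excision statement Proposition~\ref{propexcision} (which provides the coherent isomorphisms implementing sewing at chain level). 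The central extension $\cat{C}\text{-}\Surfc$ is handled by feeding the central generator $\C_\Sigma$ to a fixed invertible scalar (or, more invariantly, to the anomaly line), which is consistent because of the multiplicativity relation~\eqref{eqnmultiplicatively}. I would then define
\begin{align}
\mfc \ :=\ \hoLan\bigl(\CMhat\to\Ch\ \text{along}\ \CMhat\to\cat{C}\text{-}\Surfc\bigr)\ .
\end{align}

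\textbf{Why this is a modular functor, and the computation formula.} To see that $\mfc$ is well-defined and independent of auxiliary choices — and that \eqref{eqnagreementconfblock3} holds — the crucial ingredient is the \emph{contractibility} of the category of colored markings, Theorem~\ref{thmmhatcontractible}: since the fiber $\Mhat(\Sigma)$ is contractible (a replacement for the connected, simply connected groupoid of markings in the Lego--Teichm\"uller game of \cite{bakifm}), the homotopy left Kan extension at $\Sigma$ is computed as the homotopy colimit of the marked-block functor over the contractible category $\Mhat(\Sigma)$, which is canonically equivalent to the value $\block^{\Sigma,\Gamma}_\cat{C}(\underline{X})$ at any single chosen marking $\Gamma$. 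This simultaneously yields the canonical equivalence \eqref{eqnagreementconfblock3} and the fact that different markings give equivalent answers, hence that the construction descends to $\cat{C}\text{-}\Surfc$ independently of markings. Finally, excision for $\mfc$ in the sense of Definition~\ref{predefdmf} is obtained by transporting the \emph{marked} excision Proposition~\ref{propexcision} along the equivalences \eqref{eqnagreementconfblock3}: a sewing $\Sigma\to\Sigma'$ can be realized on representatives carrying compatible markings, where it becomes the isomorphism of Proposition~\ref{propexcision}; that the cylinder category $\cat{Z}_{\mfc}$ is $\Proj\cat{C}$ follows from evaluating \eqref{eqnagreementconfblock3} on the cylinder, where the marked block is $\cat{C}(I, X^\vee\otimes Y)\cong\cat{C}(X,Y)$ by \eqref{eqndmfoncyl}. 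The main obstacle, and the technical heart of the whole argument, is establishing the contractibility statement Theorem~\ref{thmmhatcontractible}: this requires translating the connectedness and simple-connectedness of the groupoid of markings from \cite{bakifm,hatcherthurston,harer} into a statement that all higher homotopy of the Kan complex associated to $\Mhat(\Sigma)$ vanishes, for which the uncoloring morphisms and the single-colored-cut normalization are precisely the device that kills the relevant loops and higher cells.
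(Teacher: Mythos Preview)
Your proposal is correct and follows essentially the same approach as the paper: marked blocks on $\Mhat(\Sigma)$ via Theorem~\ref{thmfunctormhat}, globalization over surfaces via the Grothendieck construction $\CMhat$, definition of $\mfc$ by homotopy left Kan extension, and then the equivalence~\eqref{eqnagreementconfblock3} and excision via Theorem~\ref{thmmhatcontractible} and Proposition~\ref{propexcision}. Two small points worth tightening: (i) the homotopy left Kan extension is a priori a homotopy colimit over the \emph{slice} $\Pi/(\Sigma,\underline{X})$, not over the fiber $\Mhat(\Sigma)$, and the paper uses a homotopy finality argument (Lemma~\ref{lemmaUfinal}) to pass from one to the other before invoking contractibility; (ii) the paper separates the anomaly-free case (working over $\cat{C}\text{-}\Surf$) from the anomalous case, handling the latter by replacing $\Mhat(\Sigma)$ with a centrally extended version $\Mhat^\C(\Sigma)$ rather than by directly Grothendieck-constructing over $\cat{C}\text{-}\Surfc$ as you suggest --- but these lead to the same outcome.
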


This covers both parts of the main result as formulated in the introduction. 
The relation of the zeroth homology $H_0\mfc$ of $\mfc$ to classical constructions will be explained in Remark~\ref{rellyu}.
The remaining Sections~\ref{seclego} and~\ref{secconstruc} of this article are devoted to the proof of Theorem~\ref{thmmain}. 
In the remainder of this section, we will discuss implications and additions to Theorem~\ref{thmmain} and also examples.\myskip

Since homotopy coherent actions transfer along equivalences, we obtain the following immediate consequence of \eqref{eqnagreementconfblock3}:

\begin{corollary}\label{cormain0}
	Let  $\cat{C}$ be a modular category 
	and $\Sigma$ an extended surface with  marking $\Gamma$ and projective boundary label $\underline{X}$.
	Then the marked block $\block^{\Sigma,\Gamma}_\cat{C}(\underline{X})$ comes canonically with a homotopy coherent projective action of the mapping class group of $\Sigma$.
\end{corollary}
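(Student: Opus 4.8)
The plan is to observe that Corollary~\ref{cormain0} is a formal consequence of Theorem~\ref{thmmain}, specifically of the equivalence~\eqref{eqnagreementconfblock3}, combined with the standard fact that a homotopy coherent action of a group can be transported along a zigzag of quasi-isomorphisms. First I would recall what structure the right-hand side $\mfc(\Sigma,\underline{X})$ of~\eqref{eqnagreementconfblock3} already carries: since $\mfc : \cat{C}\text{-}\Surfc \to \Ch$ is a symmetric monoidal functor and $\Map(\Sigma) = \Aut_{\Surf}(\Sigma)$ lifts to $\Aut_{\Surfc}(\Sigma)$ fitting into the central extension by $\mathbb{Z}$ described in Section~\ref{secsurfacecat}, restricting $\mfc$ to the automorphism group of the object $(\Sigma,\underline{X})$ in $\cat{C}\text{-}\Surfc$ yields a genuine (strict) action of the central extension $\Aut_{\Surfc}(\Sigma)$ on the chain complex $\mfc(\Sigma,\underline{X})$ by chain isomorphisms, i.e.\ a strict projective action of $\Map(\Sigma)$ in the sense that the central generator $\C_\Sigma$ acts by some automorphism (which, after passing to a suitable invertible scalar, is what one calls the anomaly). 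This is the genuinely topological input and requires nothing beyond Theorem~\ref{thmmain}.

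Next I would invoke the equivalence~\eqref{eqnagreementconfblock3}, which by our conventions is a canonical zigzag of quasi-isomorphisms $\block^{\Sigma,\Gamma}_\cat{C}(\underline{X}) \xrightarrow{\simeq} \mfc(\Sigma,\underline{X})$ (canonical once the marking $\Gamma$ is fixed). The point is that a strict action of a group (or of the central extension) on an object $Y$ of $\Ch$, viewed as a homotopy coherent action, can be pulled back along any quasi-isomorphism $X \xrightarrow{\simeq} Y$ to a homotopy coherent action on $X$: this is the statement that homotopy coherent group actions — equivalently, functors from the corresponding simplicial resolution, or modules over the group in the $\infty$-category of chain complexes — form a homotopy-invariant notion, so equivalent objects carry equivalent such actions. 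Since $\Ch$ with its projective model structure is a combinatorial model category and $\block^{\Sigma,\Gamma}_\cat{C}(\underline{X})$ is (after cofibrant–fibrant replacement, which changes nothing up to equivalence) an object to which the transport applies, the strict $\Aut_{\Surfc}(\Sigma)$-action on $\mfc(\Sigma,\underline{X})$ transports to a homotopy coherent $\Aut_{\Surfc}(\Sigma)$-action on $\block^{\Sigma,\Gamma}_\cat{C}(\underline{X})$, i.e.\ to a homotopy coherent projective $\Map(\Sigma)$-action. The word ``canonically'' in the statement is justified because the zigzag~\eqref{eqnagreementconfblock3} is canonical after the choice of $\Gamma$, and the transport construction is functorial in the quasi-isomorphism.

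The main obstacle, such as it is, is purely a matter of bookkeeping rather than mathematical depth: one must be careful that ``homotopy coherent projective action'' is the correct target notion and that transport of homotopy coherent actions along a zigzag (as opposed to a single map) is legitimate. Concretely, given $X \xleftarrow{\simeq} Z \xrightarrow{\simeq} Y$ with a strict action on $Y$, one first lifts the action to $Z$ (using that $Z$ is a homotopy-fixed object and the relevant resolution is cofibrant, so one can rectify up to coherent homotopy), then transports from $Z$ to $X$ along the other leg, again up to coherent homotopy. The cleanest way to phrase this is to say that a group $G$ acting on $Y$ corresponds to a functor $BG \to \Ch$ in the $\infty$-categorical sense (or a functor out of a cofibrant resolution $\mathsf{E}G$ à la \cite{riehl18}), that equivalences in $\Ch$ induce equivalences on such functor categories, and that therefore the whole package transports. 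I would also remark that the central extension is harmless: it only adds one central $\mathbb{Z}$, so all of the above goes through verbatim with $\Aut_{\Surfc}(\Sigma)$ in place of $\Map(\Sigma)$, and the resulting action is ``projective'' precisely in that the central generator need not act trivially. I expect the proof to be two or three sentences once these conventions are in place, since the hard analytic and topological work is entirely contained in Theorem~\ref{thmmain}.
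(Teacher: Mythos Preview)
Your proposal is correct and follows exactly the paper's approach: the paper states the corollary as an immediate consequence of~\eqref{eqnagreementconfblock3} together with the fact that homotopy coherent actions transfer along equivalences, which is precisely what you spell out. Your version is more detailed about the transport mechanism, but the mathematical content is identical.
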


\begin{remark}[Framing anomaly]\label{remframinganomaly}
	For every modular category $\cat{C}$, one can define an invertible scalar $\zeta \in k^\times$, the 
	\emph{framing anomaly} (or \emph{central charge}) \cite{turaev,gilmermasbaum}.
	A modular category whose central charge is $1 \in k$ will be called \emph{anomaly-free}. 
	The framing anomaly controls the projectivity of the mapping class group actions that are part of the vector space valued modular functor for $\cat{C}$ in the sense that the central generator of a surface with genus $g$ is sent to multiplication with $\zeta^g$.
	Hence, in the anomaly-free case, these mapping class group actions will be linear and not only projective.
	The projectivity of the mapping class group actions obtained from the modular functor with values in chain complexes from Theorem~\ref{thmmain}  is precisely the same. 
	This will be explained in detail in Remark~\ref{anomalouscase}.
	\end{remark}

\begin{remark}[Non-projective boundary labels]\label{remproj}
	In the definition of $\cat{C}\text{-}\Surfc$,
	we only allow projective boundary labels. The choice leads to simplifications in the presentation (it matches better with excision arguments because our homotopy coends always run over subcategories of projectives), but we \emph{can} define  conformal blocks for non-projective labels although this does not amount to a substantial addition (because our modular functor with its present definition already contains all the necessary information): 
	Let $\Sigma$ be an extended surface. 
	For \emph{any} boundary label $\underline{X}$, we denote by $\cof \underline{X}$ the boundary label for $\Sigma$ with differential graded objects in $\cat{C}$ that resolves every outgoing label projectively and every ingoing label injectively (this replacement can be chosen functorially). Since $\cat{C}$ is self-injective (i.e.\ the projective objects are precisely the injective ones), $\cof \underline{X}$ is degree-wise projective.
	Therefore, we can apply $\mfc(\Sigma,-)$ degreewise and obtain 
	the $|\underline{X}|+1$-fold complex 
 $\mfc(\Sigma,\cof \underline{X})$ 
 whose totalization we define to be $\mfc(\Sigma,\underline{X})$. In order to see that this  does not depend on the choice of functorial resolution, let $\Sigma_g^{0|1}$ be a surface of genus $g\ge 1$ (the case $g=0$ follows directly from \cite[Lemma~3.8]{dva})
	 and with one boundary component that is outgoing (more boundary components, some of them possibly incoming, can be treated analogously). 
	 Denote by $\Gamma_{g,0,1}^\circ$ the marking on $\Sigma_g^{0|1}$ discussed in Example~\ref{exblock}. There we found in \eqref{eqnexblock2} a canonical equivalence
	\begin{align} \block^{\Sigma_g^{0|1},\Gamma_{g,0,1}^\circ}_\cat{C}(Y)\simeq \cat{C}  \left(  I,Y \otimes \left(     \flint^{P\in\Proj \cat{C}} P\otimes P^\vee       \right)^{\otimes g}    \right)\ , \quad Y\in \Proj \cat{C}\ .  \label{eqncomputationwithblocks}
	\end{align}
	In order to prove independence of $\mfc(\Sigma_g^{0|1},\cof X)$ of the chosen resolution, it suffices by naturality of the maps \eqref{eqnagreementconfblock3} to observe that the chain complex valued functor
	$ \cat{C}  \left( I, - \otimes \left(     \flint^{P\in\Proj \cat{C}} P\otimes P^\vee       \right)^{\otimes g}    \right)$ preserves equivalences between non-negatively graded complexes of projective objects in $\cat{C}$. But this follows from the exactness of the monoidal product and \cite[Lemma~3.8]{dva}.
	 By functoriality of $\mfc$ in boundary labels (Remark~\ref{cylcatfunctorialdeprem})
	 the complexes will again carry an action of the mapping class group of $\Sigma$.
	 It should also be noted that from \eqref{eqncomputationwithblocks}, \eqref{eqnagreementconfblock3} and \cite[Lemma~3.8]{dva} it follows that
	 \begin{align}
	 \mfc(\Sigma_g^{0|1},I)\simeq \mfc(\Sigma_g) \ , 
	 \end{align}
	 where $\Sigma_g=\Sigma_{g}^{0|0}$ is the closed surface of genus $g$.
	\end{remark}

\subsection{A canonical family of bimodules\label{secbimodules}}
The marked block for the closed torus and a specific marking
is given by the Hochschild complex (Example~\ref{exhochschild}), and it was shown already in \cite{dva} that the Hochschild complex of a modular category carries a homotopy coherent $\SL(2,\mathbb{Z})$-action, see \cite{svea} for a related Hopf algebraic result on (co)homology level.
The fully fledged modular functor of a modular category provides now for us complexes that we may interpret as higher genus analoga of the Hochschild complex together with (homotopy coherent) projective mapping class group actions on them.

\subparagraph{Bimodules and the Agreement Principle.}
For a linear category $\cat{C}$, a $\cat{C}$-bimodule is a functor $M:\cat{C}^\op\otimes\cat{C}\to\Vect_k$. 
We define the Hochschild complex of $\cat{C}$ with coefficients in this bimodule as the homotopy coend $CH(\cat{C};M):=\lint^{X\in\Proj \cat{C}} M(X,X)$. 
This definition is made in a way such that the following holds: If $\cat{C}=\Mod_k A$ is given as the category of finite-dimensional modules over a finite-dimensional algebra $A$, then by the \emph{Agreement Principle},
that goes back to McCarthy and Keller \cite{mcarthy,keller} 
and is stated in a slightly modified form in \cite[Theorem~2.9]{dva},
the canonical embedding $\iota_A : \star // A^\op \to \Proj _k A=\Proj \cat{C}$
from the category with one object with endomorphisms given by $A^\op$ into $\Proj\cat{C}$ induces an equivalence
\begin{align}
CH(A;M(A,A)) \ra{\simeq} CH(\cat{C};M)  \label{eqnagreementprinciple}
\end{align}
of chain complexes,
where $CH(A;M(A,A))$ are the `ordinary' Hochschild chains of $A$ with coefficients in the bimodule $M(A,A)$. \myskip

For any finite tensor category $\cat{C}$, there is a canonical family $\left( M_g  \right)_{g\ge 0}$ of $\cat{C}$-bimodules defined by
\begin{align}
M_g (X,Y)= \cat{C}(X,Y\otimes \mathbb{F}^{\otimes g}) \ , \quad X,Y \in \cat{C}\label{eqndefmg}
\end{align}
where $\mathbb{F}=\int^{X \in \cat{C}} X \otimes X^\vee$ is the canonical coend.

\begin{theorem}\label{cormain2}
For any modular category $\cat{C}$  and $g\ge 0$, the Hochschild chains $CH(\cat{C};M_g)$ with coefficients in the bimodule $M_g$  carry a canonical homotopy coherent projective action of the mapping class group $\Map(\Sigma_{g+1})$ of the closed surface of genus $g+1$.
\end{theorem}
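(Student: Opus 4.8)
The plan is to realise $CH(\cat{C};M_g)$ as a marked block for the closed surface $\Sigma_{g+1}$ and then transport the mapping class group action from the modular functor $\mfc$ provided by Theorem~\ref{thmmain}. Topologically the idea is transparent: the bimodule $M_g$ is, up to equivalence, the value of $\mfc$ on a genus-$g$ surface with one incoming and one outgoing boundary component, and sewing those two boundary components together attaches a handle and yields $\Sigma_{g+1}$; excision then identifies the Hochschild complex $\lint^{P}M_g(P,P)$ with $\mfc(\Sigma_{g+1})$, on which the mapping class group acts (strictly, projectively) because $\mfc$ is a symmetric monoidal functor out of $\cat{C}\text{-}\Surfc$.

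The first step is to identify $M_g$ with a marked block. Take the surface $\Sigma_g^{1|1}$ of genus $g$ with one incoming and one outgoing boundary component, equipped with the marking $\Gamma_{g,1,1}^\circ$ of Example~\ref{exblock}. For projective labels $X'$ (incoming) and $X''$ (outgoing), equation \eqref{eqnexblock1} gives a canonical equivalence $\block_\cat{C}^{\Sigma_g^{1|1},\Gamma_{g,1,1}^\circ}(X',X'')\simeq \cat{C}\bigl(X',X''\otimes(\flint^{P\in\Proj\cat{C}}P\otimes P^\vee)^{\otimes g}\bigr)$; since $X'$ is projective, $\cat{C}(X',-)$ is exact, and because $\flint^{P}P\otimes P^\vee\simeq\mathbb{F}$ is a projective resolution and the monoidal product is exact, $(\flint^{P}P\otimes P^\vee)^{\otimes g}$ resolves $\mathbb{F}^{\otimes g}$, so this further equals $\cat{C}(X',X''\otimes\mathbb{F}^{\otimes g})=M_g(X',X'')$ by \eqref{eqndefmg}. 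The point that needs care is that this zigzag of equivalences is natural in $(X',X'')$, i.e.\ that it assembles into an equivalence of $\Proj\cat{C}$-bimodules $\block_\cat{C}^{\Sigma_g^{1|1},\Gamma_{g,1,1}^\circ}(-,-)\simeq M_g(-,-)$; this follows from the naturality of the steps in the proof of Proposition~\ref{propoitcoend} (duality, the results of \cite{dva}, exactness of $\otimes$) together with the functoriality of marked blocks in boundary labels recorded in Section~\ref{derivedconformalblockssec} (see Remark~\ref{cylcatfunctorialdeprem}).

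The second step is excision. Sewing the incoming boundary component of $\Sigma_g^{1|1}$ to the outgoing one attaches a handle and produces the closed surface $\Sigma_{g+1}$; write $s:\Sigma_g^{1|1}\to\Sigma_{g+1}$ for this self-sewing and let $\Gamma'$ be the marking it induces on $\Sigma_{g+1}$. Proposition~\ref{propexcision} yields a canonical equivalence $\lint^{P\in\Proj\cat{C}}\block_\cat{C}^{\Sigma_g^{1|1},\Gamma_{g,1,1}^\circ}(P,P)\ra{\simeq}\block_\cat{C}^{\Sigma_{g+1},\Gamma'}$. Feeding in the bimodule identification of the previous paragraph (which, being natural, descends to homotopy coends) and the definition $CH(\cat{C};M_g)=\lint^{P\in\Proj\cat{C}}M_g(P,P)$, we obtain a canonical zigzag $CH(\cat{C};M_g)\simeq\block_\cat{C}^{\Sigma_{g+1},\Gamma'}$; the equivalence \eqref{eqnagreementconfblock3} of Theorem~\ref{thmmain} then gives $\block_\cat{C}^{\Sigma_{g+1},\Gamma'}\ra{\simeq}\mfc(\Sigma_{g+1})$.

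The final step produces the action. Being a strict symmetric monoidal functor $\cat{C}\text{-}\Surfc\to\Ch$, the modular functor $\mfc$ restricts on the single object $\Sigma_{g+1}$ to a strict action of $\Aut_{\cat{C}\text{-}\Surfc}(\Sigma_{g+1})$, which by the construction of the central extension in Section~\ref{secsurfacecat} is the $\mathbb{Z}$-central extension of $\Map(\Sigma_{g+1})$; thus $\mfc(\Sigma_{g+1})$ carries a strict projective $\Map(\Sigma_{g+1})$-action, linear precisely when $\cat{C}$ is anomaly-free (cf.\ Remark~\ref{remframinganomaly}). Transporting this action along the canonical zigzag of equivalences built in the previous step — using, as in Corollary~\ref{cormain0}, that homotopy coherent projective actions transfer along equivalences — endows $CH(\cat{C};M_g)$ with the asserted canonical homotopy coherent projective $\Map(\Sigma_{g+1})$-action; for $g=0$ one has $M_0(X,Y)=\cat{C}(X,Y)$, so this recovers the $\SL(2,\mathbb{Z})$-action on the ordinary Hochschild complex from \cite{dva}. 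I expect the main obstacle to be the naturality assertion of the first step: one must check that the identifications $\block_\cat{C}^{\Sigma_g^{1|1},\Gamma_{g,1,1}^\circ}(-,-)\simeq M_g(-,-)$ cohere into an equivalence of $\Proj\cat{C}$-bimodules, so that they survive the homotopy coend, which requires keeping track of the duality isomorphisms used in Proposition~\ref{propoitcoend} and of the bimodule structure along the excision isomorphism.
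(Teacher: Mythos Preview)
Your proposal is correct and follows essentially the same approach as the paper: identify $\block_\cat{C}^{\Sigma_g^{1|1},\Gamma_{g,1,1}^\circ}(X,Y)\simeq M_g(X,Y)$ via \eqref{eqnexblock1} and projectivity of $X$, apply marked excision (Proposition~\ref{propexcision}) to pass to $\Sigma_{g+1}$, and then invoke Corollary~\ref{cormain0}. The paper's proof is terser and does not explicitly discuss the naturality issue you flag (it simply asserts the bimodule equivalence), but your more careful treatment of that point and of the transfer mechanism is a reasonable elaboration rather than a departure.
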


The proof strategy was already outlined in the introduction: We realize that $CH(\cat{C};M_g)$ is the marked block for a $\Sigma_{g+1}$ and a \emph{specific marking} and then apply our main result. 

\begin{proof}
	Denote by $\Gamma_{g,1,1}^\circ$ the marking from Example~\ref{exblock}
	on the surface $\Sigma_g^{1|1}$ with genus $g$ and one incoming boundary component labeled by $X\in\Proj \cat{C}$ and one outgoing boundary component labeled by $Y\in\Proj \cat{C}$.
By \eqref{eqnexblock1} we have a canonical equivalence
	\begin{align}
	\block_\cat{C}^{\Sigma_g^{1|1},\Gamma_{g,1,1}^\circ}(X,Y) \simeq \cat{C}\left(X,Y\otimes \left(  \flint^{P\in\Proj\cat{C}} P\otimes P^\vee \right)^{\otimes g}\right) \  .
	\end{align}
	Since $X$ is projective, we arrive at $\block_\cat{C}^{\Sigma_g^{1|1},\Gamma_{g,1,1}^\circ}(X,Y)\simeq \cat{C}(X,Y\otimes \mathbb{F}^{\otimes g})=M_g(X,Y)$. It is now a direct consequence of excision for marked surfaces (Proposition~\ref{propexcision})  that $\block_\cat{C}^{\Sigma_{g+1}^{0|0},\Gamma_{g+1,0,0}^\circ}\simeq CH(\cat{C};M_g)$. Now the statement follows from Corollary~\ref{cormain0}.
	\end{proof}

Theorem~\ref{cormain2} can be used to obtain homotopy coherent projective mapping class group actions on the Hochschild complexes of ribbon factorizable Hopf algebras with coefficients in specific bimodules: For a ribbon factorizable Hopf algebra $A$, the category $\Mod_k A$ is modular, see e.g.\ \cite[Section~2.3]{svea2}, and the
Lyubashenko coend $\mathbb{F}=\int^{X \in \Mod_k A} X \otimes X^\vee$ in $\Mod_k A$
is isomorphic to the $A$-module $A^*_\text{coadj}$ given by the dual $A^*$ of $A$
and the so-called \emph{coadjoint action}
\begin{align*}
A \otimes A^* \to A^* \ , \quad a \otimes \alpha \mapsto \left(  b \mapsto \alpha\left(      S(a_{(1)})ba_{(2)}    \right)   \right) \ , 
\end{align*}
see \cite[Theorem~7.4.13]{kl}, where $S$ is the antipode of $A$, and we have used the Sweedler notation $\Delta a = a_{(1)}\otimes a_{(2)}$ for the coproduct $\Delta$ of $A$.
Now for $g \ge 0$, we can consider the $A$-module $A \otimes \left(  A^*_\text{coadj}  \right)^{\otimes g}$ that is defined using the monoidal structure on $\Mod_k A$.
By right multiplication on the $A$-factor this becomes an $A$-bimodule. We obtain
\begin{align}
M_g\left(A, A \right)=\Hom_A\left(A, A \otimes \left(  A^*_\text{coadj}  \right)^{\otimes g}\right)=A \otimes \left(  A^*_\text{coadj}  \right)^{\otimes g} \ ,  \end{align}
where  $A \otimes \left(  A^*_\text{coadj}  \right)^{\otimes g}$ has to be seen as $A$-module in the middle of the equation and as $A$-bimodule on the right hand side.
The Agreement Principle \eqref{eqnagreementprinciple} provides us with a canonical equivalence
\begin{align}
CH \left(   A; A \otimes \left(  A^*_\text{coadj}  \right)^{\otimes g}  \right) \ra{\simeq} CH(\Mod_k A;M_g) .
\end{align}
This leads to the following Hopf algebraic version of Theorem~\ref{cormain2}:

\begin{corollary}\label{corha}
For any ribbon factorizable Hopf algebra $A$ and $g\ge 0$, the Hochschild chains of $A$ 
 with coefficients in the $A$-bimodule $A \otimes \left(  A^*_\text{coadj}  \right)^{\otimes g}$
come with a canonical homotopy coherent projective action of the mapping class group of the closed surface of genus $g+1$. 
\end{corollary}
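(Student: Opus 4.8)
The plan is to deduce Corollary~\ref{corha} from Theorem~\ref{cormain2} by translating the statement about Hochschild chains of the finite tensor category $\Mod_k A$ into a statement about the Hochschild chains of the algebra $A$ itself, using the Agreement Principle. First I would recall that for a ribbon factorizable Hopf algebra $A$, the category $\cat{C}:=\Mod_k A$ is a modular category; this is standard, see e.g.\ \cite[Section~2.3]{svea2}, so Theorem~\ref{cormain2} applies and gives a canonical homotopy coherent projective $\Map(\Sigma_{g+1})$-action on $CH(\cat{C};M_g)$, where $M_g$ is the bimodule \eqref{eqndefmg}.

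Next I would identify the bimodule $M_g$ explicitly. By \cite[Theorem~7.4.13]{kl}, the Lyubashenko coend $\mathbb{F}=\int^{X\in\Mod_k A} X\otimes X^\vee$ is isomorphic, as an $A$-module, to $A^*_\text{coadj}$, the dual of $A$ equipped with the coadjoint action. Hence $Y\otimes\mathbb{F}^{\otimes g}\cong Y\otimes (A^*_\text{coadj})^{\otimes g}$ in $\Mod_k A$. Evaluating the bimodule $M_g(X,Y)=\cat{C}(X,Y\otimes\mathbb{F}^{\otimes g})$ at $X=Y=A$, the free module of rank one, and using $\Hom_A(A,-)\cong \operatorname{id}$ on underlying vector spaces, I get $M_g(A,A)\cong A\otimes (A^*_\text{coadj})^{\otimes g}$, which carries its $A$-bimodule structure: the left $A$-action is the diagonal module structure on the tensor product, and the right $A$-action comes from right multiplication on the distinguished $A$-factor (this is the image under the functor $M_g$ of the right $A$-action on the module $A$, i.e.\ of the right $\End_A(A)\cong A^\op$-action).

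Then I would invoke the Agreement Principle in the form \eqref{eqnagreementprinciple}: the canonical embedding $\iota_A:\star /\!/ A^\op\to\Proj_k A=\Proj\cat{C}$ induces a canonical equivalence of chain complexes
\begin{align*}
CH\bigl(A;\,A\otimes (A^*_\text{coadj})^{\otimes g}\bigr)=CH\bigl(A;M_g(A,A)\bigr)\ra{\simeq}CH(\cat{C};M_g)\ .
\end{align*}
Since homotopy coherent projective actions transfer along equivalences of chain complexes (a zigzag of quasi-isomorphisms induces an equivalence in the homotopy category, along which the $\Map(\Sigma_{g+1})$-action coherently transports), the action on $CH(\cat{C};M_g)$ from Theorem~\ref{cormain2} pulls back to a canonical homotopy coherent projective $\Map(\Sigma_{g+1})$-action on the ordinary Hochschild complex $CH\bigl(A;A\otimes (A^*_\text{coadj})^{\otimes g}\bigr)$, which is exactly the assertion.

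The only genuinely non-routine point is the bimodule identification in the second step: one must check that the $A$-bimodule structure induced on $M_g(A,A)$ by functoriality of $M_g$ in both variables matches the one described in the statement, namely the diagonal left module structure combined with right multiplication on the left-hand tensor factor. This is where one uses that $\End_A(A)\cong A$ acting by left multiplication controls the covariant slot (the diagonal action on $Y\otimes\mathbb{F}^{\otimes g}$ at $Y=A$) and that $\End_A(A)\cong A^\op$ acting by right multiplication controls the contravariant slot, which under $\cat{C}(-,A\otimes\mathbb{F}^{\otimes g})$ becomes right multiplication on the free factor $A$. Everything else is a direct application of results already stated: modularity of $\Mod_k A$, Theorem~\ref{cormain2}, the computation $\mathbb{F}\cong A^*_\text{coadj}$, and the Agreement Principle.
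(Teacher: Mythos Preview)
Your proof is correct and follows exactly the paper's own argument: modularity of $\Mod_k A$, the identification $\mathbb{F}\cong A^*_\text{coadj}$, the computation $M_g(A,A)\cong A\otimes(A^*_\text{coadj})^{\otimes g}$, the Agreement Principle~\eqref{eqnagreementprinciple}, and transfer of the action from Theorem~\ref{cormain2}. One small slip in your final paragraph: you have the two slots swapped --- the \emph{contravariant} slot (precomposition with $(\cdot a)\in\End_A(A)$) yields the diagonal left $A$-action via $\phi\mapsto\phi(a)=a\cdot\phi(1)$, while the \emph{covariant} slot (postcomposition with $(\cdot a)\otimes\id$) yields right multiplication on the $A$-factor; your earlier identification of the bimodule structure is nonetheless stated correctly and matches the paper.
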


\begin{remark}[]\label{remsvea}
	For a specific marking, the homology of the marked block of a modular category $\cat{C}$ can be computed rather explicitly. 
	We will focus on a closed surface $\Sigma_g$ of genus $g\ge 0$. In the presence of boundaries, we may use Proposition~\ref{proptrivfibbdy}.
First assume $g \ge 1$ (we will comment on the case $g=0$ in a moment).
Since any modular category is unimodular \cite[Proposition~4.5]{eno}, 
the equivalence \eqref{eqnexblock2} gives us an isomorphism
	\begin{align}
	H_* \block_\cat{C}^{\Sigma_g,\Gamma_g^\circ} \cong \Ext \left(   \mathbb{F}^{\otimes g},I  \right)^* \cong \Ext \left(   I,\mathbb{F}^{\otimes g}  \right)^* \ .   \label{eqnisoext} 
	\end{align}
	The second isomorphism uses the self-duality $\mathbb{F}^\vee\cong\mathbb{F}$ of the canonical coend of a modular category coming from the non-degenerate Hopf pairing, see \cite{shimizumodular}.
With the convention $\mathbb{F}^{\otimes 0}=I$,
the statement remains true for $g=0$ for an appropriate marking    (we need to cut the sphere into two disks along its equator).
	
	Of course, there is now also an isomorphism $H_*\mfc(\Sigma_g)\cong \Ext \left(   I,\mathbb{F}^{\otimes g}  \right)^*$, but this is not canonical. 
	  Under \eqref{eqnisoext}, the projective action of 
	  the mapping class group of $\Sigma_g$  on $H_* \block_\cat{C}^{\Sigma_g,\Gamma_g^\circ}$ corresponds to the one constructed in \cite{svea2} on $\Ext(I,\mathbb{F}^{\otimes g})$. We will, however, not spell out the details of this comparison here.
\end{remark}

\begin{example}[Drinfeld doubles in positive characteristic and the Dijkgraaf-Witten modular functor]\label{exdouble}
	For a finite group $G$, the Drinfeld double $D(G)$ is a ribbon factorizable Hopf algebra with underlying vector space
	$k(G) \otimes k[G]$, where $k(G)$ is the space of $k$-valued functions and $k[G]$ is the group algebra of $G$.
	The multiplication in $D(G)$ is given by 
		\begin{align}
	(\delta_a  \otimes b)    (\delta_c \otimes d) =     \delta_a \delta_{bcb^{-1}} \otimes b d \quad \text{for all}\quad a,b,c,d\in G \ , 
	\end{align}
	where $\delta_a$ is the $k$-valued function on $G$ supported in $a \in G$ with value $1\in k$; we refer to 
	\cite[Chapter~IX]{kassel}
	for details.
	The category $\Mod_k D(G)$ of finite-dimensional $D(G)$-modules is modular, and it is non-semisimple if and only if the characteristic of $k$ divides $|G|$.

Whenever $\Mod_k D(G)$ is semisimple, it gives rise to a topological field theory \cite{freedquinn,morton1} that is partly based on \cite{dw} and therefore often called \emph{Dijkgraaf-Witten theory}. Therefore, we call the modular functor $\mfg$ with values in chain complexes that exists by
Theorem~\ref{thmmain}
 the \emph{Dijkgraaf-Witten modular functor}.

	In this example, we prove that the evaluation of $\mfg$ 
	on a closed surface $\Sigma$ is  explicitly given by
	\begin{align}
	\mfg (\Sigma) \simeq N_*(\PBun_G(\Sigma);k) \ , \label{eqndgclosed}
	\end{align}
	where $N_*(\PBun_G(\Sigma);k)$ are the (normalized) $k$-chains on the groupoid $\PBun_G(\Sigma)$ of $G$-bundles over $\Sigma$ (for the closed torus, this is \cite[Proposition~3.3]{dva}). Under the equivalence, the mapping class group action on the left hand side corresponds to the obvious action on the right hand side.
Similar statements hold for surfaces with boundary. 

An in-depth discussion of $\mfg$ is beyond the scope of this article, and we will only sketch
 the proof of \eqref{eqndgclosed}:
	First recall the equivalence $\Mod_k D(G)\simeq \Mod_k G//G$ as linear categories, where $\Mod_k G//G$ is the category of finite-dimensional modules over the loop groupoid $G//G$ of $G$, i.e.\ the action groupoid of $G$ acting on itself by conjugation
	(there is naturally a ribbon structure on $\Mod_k G//G$ such that this equivalence is an equivalence of ribbon categories).
	Also recall $G//G\simeq \PBun_G(\mathbb{S}^1)$.
	If $\Sigma$ is an extended surface with projective boundary label $\underline{X}$ in $\Mod_k D(G)$, then we may see $\Sigma : S_0 \to S_1$ as a bordism from the incoming boundary $S_0$ to the outgoing boundary $S_1$. 
	Thanks to $\Mod_k D(G)\simeq \Mod_k \PBun_G(\mathbb{S}^1)$, the label $\underline{X}$ gives rise to projective objects $X_j \in \Mod_k \PBun_G(S_j)$ for $j=0,1$. 
	
	Restriction of bundle groupoids to the boundary yields a span of groupoids 
	\begin{equation}
	\begin{tikzcd}
	\PBun_G(S_0)&\PBun_G(\Sigma)\ar{l}[swap]{r_0}\ar{r}{r_1}&\PBun_G(S_1)
	\end{tikzcd}
	\end{equation} 
	that induces the pullback functors
		\begin{equation}
	\begin{tikzcd}
	\Ch^{\PBun_G(S_0)} \ar{r}{r_0^*} &\Ch^{\PBun_G(\Sigma)} &\Ch^{\PBun_G(S_1)}\ ,  \ar[swap]{l}{r_1^*}
	\end{tikzcd}
	\end{equation} 
	where $\Ch^{\PBun_G(M)}$ denotes the category of functors from the groupoid $\PBun_G(M)$ of $G$-bundles over a manifold $M$ to $\Ch$.
	We can now define the \emph{derived pull push functor}
	\begin{align}
	\DW (\Sigma) := \mathbb{L} {r_1}_! r_0^*\ :\ \Ch^{	\PBun_G(S_0)} \to   \Ch^{	\PBun_G(S_1)}\ ,   \label{eqnpullpush}
	\end{align}
	where $\mathbb{L} {r_1}_!$ is the homotopy left Kan extension along $r_1$.
	We also define the auxiliary complexes  \begin{align}\mathfrak{G}(\Sigma,\underline{X}):= \langle  X_1^\vee, \DW (\Sigma)X_0^\vee    \rangle \ , \end{align}
	 where $\langle -,-\rangle$ denotes the morphism spaces in $\Mod_k \PBun_G(S_1)$ understood degreewise. 
	Whenever $\Sigma$ is closed, we find $\mathfrak{G}(\Sigma)=N_*(\PBun_G(\Sigma);k)$ by definition.
	Hence, in order to prove \eqref{eqndgclosed}, it remains to prove that $\mathfrak{G}$ is actually equivalent to the modular functor $\mfg$. 
	To this end, we need the following statements:
	\begin{itemize}
		\item For composable bordisms $\Sigma:S_0\to S_1$ and $\Sigma':S_1\to S_2$, the pull push maps \eqref{eqnpullpush} satisfy $\DW(\Sigma'\circ \Sigma)\simeq \DW(\Sigma')\DW(\Sigma)$ as follows from a straightforward analogue of the Beck-Chevalley property for pull push maps \cite{morton1,extofk}
		to the derived setting. 
		This implies that $\mathfrak{G}$ satisfies excision for the gluing of \emph{disjoint} surfaces (we do not make a statement about self-sewing).

		\item On a connected surface of genus zero, $\mathfrak{G}$ agrees with the modular functor, i.e.\ it is given by the morphism spaces of $\Mod_k D(G)\simeq \Mod_k G//G$ and the monoidal product as in \eqref{eqndmfoncyl}. 
		This fact is proven by an explicit computation of the corresponding pull push map \eqref{eqnpullpush}:
		We write the homotopy left Kan extension as a homotopy colimit over the homotopy fiber of $r_1$.
		Since the homotopy fiber of any restriction functor $\PBun_G(\Sigma)\to\PBun_G(S)$ from a connected surface $\Sigma$ to a non-empty collection $S$ of boundary components has discrete homotopy fibers (this follows from the holonomy classification of bundles and the long exact sequence for homotopy groups), the homotopy fibers of $r_1$ are also discrete, and all homotopy colimits needed for the homotopy left Kan extension are just coproducts of vector spaces. 
		This allows us to verify the claim directly.
		
		\end{itemize}
	From these two statements, we can already conclude $\mathfrak{G}(\Sigma)\simeq \mfg(\Sigma)$ for every closed surface and thereby deduce \eqref{eqndgclosed}.
	The proof that, under the equivalence, the $\Map(\Sigma)$-action on $\mfg(\Sigma)$ corresponds to the topological one on $N_*( \PBun_G(\Sigma);k)$ is very similar to the vector space valued case. We will not discuss the details here. 
	\end{example}

\section{The homotopy coherent Lego-Teichmüller game\label{seclego}}
In Section~\ref{secderivedblocks} we have defined chain complexes of vector spaces for a surface and the auxiliary datum of a marking. 
In order to prove the main result, we will have to understand how these quantities depend on the marking.
Before investigating this point in the next section, we need to understand how different markings on a surface are related. 
To this end, we will use and extend work of Bakalov and Kirillov \cite{bakifm} who, based on \cite{hatcherthurston,harer}, define a groupoid of markings on a surface and prove that this groupoid is connected and simply connected. Some key ideas in \cite{bakifm} and also the name \emph{Lego-Teichmüller game} go back to Grothendieck's research proposal \emph{Esquisse d'un Programme} \cite{grothendieck}.
For our purposes, we will need to replace the contractible groupoid of markings on a given extended surface by a contractible $\infty$-groupoid. 
This replacement was motivated in the introduction. Its significance will become clear in the next section.

\subsection{Groupoids of  markings\label{secfinemarkings}}
For an extended surface $\Sigma$, we denote by $\FM(\Sigma)$ the \emph{groupoid of     markings on $\Sigma$} \cite{bakifm}. Its objects are     markings on $\Sigma$ (see Section~\ref{secsurfaceterminology}). The morphisms of $\FM(\Sigma)$ are given in terms of four generators (called \emph{moves}):
\begin{itemize}
	\item $\Z$ (cyclic rotation of the marking),
	\item $\F$ (cut deletion),
	\item $\B$ (braiding),
	\item $\S$ (passing to transversal cut on a torus with one hole).
	\end{itemize}
These four moves are subject to a list of relations and generate $\M(\Sigma)$ under sewing.
We refer to \cite[Section~4]{bakifm} for the detailed definition of the moves
and their relations.
We will recall the necessary aspects directly in the place where we will need them.

\begin{remark}
Instead of arbitrary markings, one can restrict to \emph{fine} markings  (for a fine marking, the underlying cut system contains spheres with at least one and at most three holes). This case is also treated in \cite{bakifm} and turns out to be not substantially different. A full list of the generators and relations for the groupoid of fine markings is given in \cite[Section~2.2]{jfcs}.
\end{remark}

Building on the results of \cite{hatcherthurston,harer}, Bakalov and Kirillov prove the following fundamental result about the groupoids of     markings:

\begin{theorem}[\text{\cite[Theorem~4.24]{bakifm}}]\label{thmbakifm}
	For any extended surface $\Sigma$,
	the groupoid $\FM(\Sigma)$ of     markings on  $\Sigma$ is 
	  connected and simply-connected.
\end{theorem}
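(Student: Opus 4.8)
The statement to be proven is Theorem~\ref{thmbakifm}, which attributes the result to Bakalov--Kirillov \cite[Theorem~4.24]{bakifm}: the groupoid $\FM(\Sigma)$ of markings on an extended surface $\Sigma$ is connected and simply connected. Since this is quoted as an external result, the "proof" I would present is really a sketch of the Bakalov--Kirillov argument, which ultimately rests on the Hatcher--Thurston and Harer results on the cut-system complex.

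The plan is to break the statement into its two halves. For \emph{connectivity}, I would argue that any two markings on $\Sigma$ can be joined by a finite sequence of moves $\Z, \F, \B, \S$. The key input here is the Hatcher--Thurston theorem \cite{hatcherthurston}: the complex whose vertices are cut systems and whose edges are elementary moves (replacing one cut by a transverse one) is connected. This handles the underlying cut systems of the markings. Once two markings have the same underlying cut system, one is reduced to comparing the marking-without-cuts data (charts and compatible graphs) on each genus-zero piece; here one uses that the mapping class group of a sphere with holes is generated by braidings and Dehn twists around boundary components, which are realized by the moves $\B$ and $\Z$ (together with $\F$ to change the number of internal vertices of the graph). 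Assembling these local statements via sewing gives a path between any two markings, so $\pi_0 \FM(\Sigma)$ is trivial.

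For \emph{simple connectivity}, the strategy is to show that every loop of moves is a consequence of the defining relations — in other words, that the explicit list of relations in \cite[Section~4]{bakifm} is a \emph{complete} set of relations, so that $\FM(\Sigma)$ has trivial fundamental group at every object. This is the genuinely hard part. The argument proceeds by building a 2-complex: vertices are markings, edges are moves, and 2-cells are glued in along the relations; one must then show this 2-complex is simply connected. The deeper topological input is Harer's result \cite{harer} on the second homotopy/homology of the cut-system complex (or rather the fact that the Hatcher--Thurston complex becomes simply connected after attaching 2-cells along the "cycles" coming from the relations among elementary cut-system moves). Bakalov--Kirillov then perform a careful bookkeeping: they reduce an arbitrary relation among the moves $\Z,\F,\B,\S$ to relations that already occur in genus-zero pieces (where they follow from the coherence theorems for braided categories / the presentation of mapping class groups of punctured spheres) plus the cut-system relations lifted from Harer. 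The main obstacle is precisely this last reduction step: tracking a loop of markings through changes of cut system, normalizing it so that pieces of it live over a fixed cut system, and certifying that each normalized piece is killed by a known relation. This requires the "Lego" philosophy — cutting the surface into standard pieces, verifying relations piece by piece, and checking that the verifications are compatible with regluing.

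Since all of this is carried out in \cite{bakifm} (and the antecedent references \cite{hatcherthurston,harer}), for the purposes of this article I would simply cite Theorem~4.24 of \cite{bakifm} and move on; reproducing the proof is neither necessary nor in the spirit of the present paper, whose contribution is the homotopy-coherent \emph{refinement} built on top of this result. What I would add, if anything, is a remark that the case of \emph{fine} markings (the version actually used downstream, with cut systems into spheres with one to three holes) is likewise covered in \cite{bakifm}, with the explicit generator-and-relation list recorded in \cite[Section~2.2]{jfcs}; the passage between arbitrary and fine markings is an easy cofinality argument and does not affect connectivity or simple connectivity.
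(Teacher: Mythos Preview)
Your proposal is correct and matches the paper's approach: the paper does not prove this theorem at all but simply states it with attribution to \cite[Theorem~4.24]{bakifm}, and you correctly recognize that the appropriate ``proof'' here is a citation, with your sketch of the Bakalov--Kirillov argument (via Hatcher--Thurston connectivity and Harer simple connectivity of the cut-system complex) being accurate but supplementary. Your closing remark about fine markings and the reference to \cite[Section~2.2]{jfcs} even anticipates the Remark that the paper places immediately before this theorem.
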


We call  a category $\cat{A}$ \emph{contractible} if the topological space $|N\cat{A}|$ obtained by geometric realization of its nerve $N\cat{A}$ is contractible, i.e.\ equivalent to a point.
Put differently, in this case, the $\infty$-groupoid obtained by localizing (in the sense of $\infty$-categories) $\cat{A}$ at \emph{all} morphisms is a contractible Kan complex.
Hence, a groupoid is contractible if and only if it is connected and simply connected.
Therefore,  Theorem~\ref{thmbakifm} can be rephrased by saying that the groupoids of     markings are contractible. For groupoids, this additional terminology is not really necessary, but it will become relevant below for categories which are not groupoids. \myskip

For an extended surface $\Sigma$, Bakalov and Kirillov  also define a groupoid $\C(\Sigma)$ of cut systems on $\Sigma$ \cite[Section~7.1--7.3]{bakifm}. Its objects are cut systems on $\Sigma$ (as defined in Section~\ref{secsurfaceterminology}), and its morphisms are generated by the moves $\barF$ and $\barS$
(corresponding to  the moves $\F$ and $\S$ listed above)
which are explained in Figure~\ref{figbarfbars}.
 These two moves are subject to five relations.

\begin{figure}[h]
	\centering
\begin{tikzpicture}[scale=0.45, style/.style={circle, inner sep=0pt,minimum size=0mm},
poin/.style={circle, inner sep=0.2pt,minimum size=0.2mm},decoration={
	markings,
	mark=at position 0.5 with {\arrow{>}}}]
\node [style=none] (23) at (-28.5, 10) {};
\node [style=none] (24) at (-28.5, 8) {};
\node [style=none] (25) at (-20, 10) {};
\node [style=none] (26) at (-20, 8) {};
\node [style=none] (31) at (-24.25, 10) {};
\node [style=none] (32) at (-24.25, 8) {};
\node [style=none] (33) at (-30, 9) {$\dots$};
\node [style=none] (34) at (-19, 9) {$\dots$};
\node [style=none] (37) at (-24.75, 3.5) {};
\node [style=none] (38) at (-21.25, 3.5) {};
\node [style=none] (39) at (-25.5, 4.25) {};
\node [style=none] (40) at (-20.5, 4.25) {};
\node [style=none] (41) at (-23, 3) {};
\node [style=none] (43) at (-18.5, 3.75) {};
\node [style=none] (44) at (-23, 1) {};
\node [style=none] (45) at (-28.5, 4.75) {};
\node [style=none] (46) at (-28.5, 2.75) {};
\node [style=none] (47) at (-23, 6.25) {};
\node [style=none] (50) at (-30, 3.75) {$\dots$};
\node [style=none] (51) at (-7, 3.5) {};
\node [style=none] (52) at (-3.5, 3.5) {};
\node [style=none] (53) at (-7.75, 4.25) {};
\node [style=none] (54) at (-2.75, 4.25) {};
\node [style=none] (55) at (-5.25, 3) {};
\node [style=none] (56) at (-0.75, 3.75) {};
\node [style=none] (57) at (-5.25, 1) {};
\node [style=none] (58) at (-10.75, 4.75) {};
\node [style=none] (59) at (-10.75, 2.75) {};
\node [style=none] (60) at (-5.25, 6.25) {};
\node [style=none] (61) at (-5.25, 2) {};
\node [style=none] (62) at (-5.25, 5.75) {};
\node [style=none] (63) at (-12.25, 3.75) {$\dots$};
\node [style=none] (64) at (-17, 9) {};
\node [style=none] (65) at (-14, 9) {};
\node [style=none] (66) at (-11, 10) {};
\node [style=none] (67) at (-11, 8) {};
\node [style=none] (68) at (-2.5, 10) {};
\node [style=none] (69) at (-2.5, 8) {};
\node [style=none] (70) at (-6.75, 10) {};
\node [style=none] (71) at (-6.75, 8) {};
\node [style=none] (72) at (-12.5, 9) {$\dots$};
\node [style=none] (73) at (-1.5, 9) {$\dots$};
\node [style=none] (74) at (-15.5, 9.5) {$\barF$};
\node [style=none] (75) at (-17, 3.75) {};
\node [style=none] (76) at (-14, 3.75) {};
\node [style=none] (77) at (-15.5, 4.25) {$\barS$};
\node [style={small_black}] (78) at (-24.75, 8.5) {};
\node [style={small_black}] (79) at (-29, 3.25) {};
\node [style={small_black}] (80) at (-11.25, 3.25) {};
\node [style={small_black}] (81) at (-23.5, 1.5) {};
\node [style={small_black}] (82) at (-7.75, 2.6) {};
\draw (23.center) to (25.center);
\draw (24.center) to (26.center);
\draw [bend right=90] (31.center) to (32.center);
\draw [bend left=90,dotted] (31.center) to (32.center);
\draw [bend left=45] (37.center) to (38.center);
\draw [bend right=60] (39.center) to (40.center);
\draw [bend right=90,dotted] (44.center) to (41.center);
\draw [bend left=90] (44.center) to (41.center);
\draw [bend right=90,dotted] (46.center) to (45.center);
\draw [bend left=90] (46.center) to (45.center);
\draw [in=0, out=-90] (43.center) to (44.center);
\draw [in=0, out=180] (44.center) to (46.center);
\draw [in=0, out=90] (43.center) to (47.center);
\draw [in=0, out=-180] (47.center) to (45.center);
\draw [bend left=45] (51.center) to (52.center);
\draw [bend right=60] (53.center) to (54.center);
\draw [bend right=90,dotted] (59.center) to (58.center);
\draw [bend left=90] (59.center) to (58.center);
\draw [in=0, out=-90] (56.center) to (57.center);
\draw [in=0, out=180] (57.center) to (59.center);
\draw [in=0, out=90] (56.center) to (60.center);
\draw [in=0, out=-180] (60.center) to (58.center);
\draw [in=0, out=0, looseness=3.00] (62.center) to (61.center);
\draw [in=180, out=-180, looseness=3.00] (62.center) to (61.center);
\draw (66.center) to (68.center);
\draw (67.center) to (69.center);
\draw [->] (64.center) to (65.center);
\draw [->] (75.center) to (76.center);
\end{tikzpicture}
\caption{The $\barF$-move can be applied to a cylindrical region with a cut and deletes this cut 
 --- provided, of course, that this still leaves us with a cut system. The dots are supposed to symbolize that the surface may continue to the left and the right of the displayed region. The $\barS$-move can be applied to a region of the shape of torus with one hole and one cut. It replaces the one cut with a transversal one. 
The marked points on the boundary and the orientation of cuts and boundaries 
 are suppressed in the picture. For the $\barF$-move,
 the cut that is being deleted can have any orientation and any position of the marked point.
 For the $\barS$-move, the cut that is being replaced by a transversal one as well as its replacement can have arbitrary orientation and position of the marked point. The other cut (or boundary component if the surface ends there) has some arbitrary orientation and position of the marked point that is not changed by the $\barS$-move. 
}
\label{figbarfbars}
\end{figure}
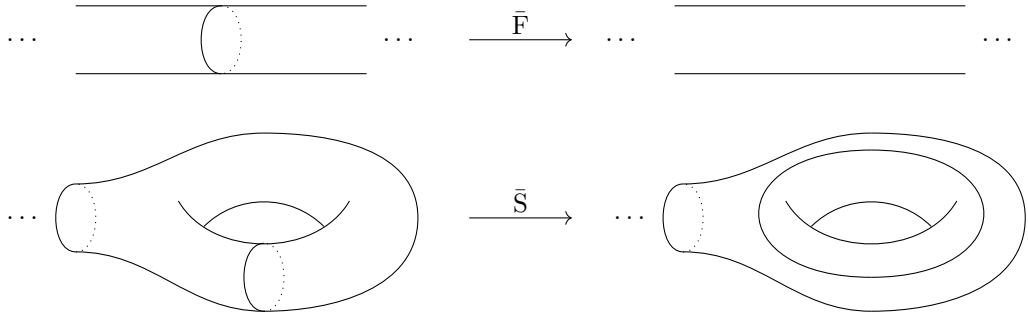

In addition to Theorem~\ref{thmbakifm}, we will also need the following contractibility result (that the proof of Theorem~\ref{thmbakifm} cited above actually relies on):

\begin{theorem}[\text{\cite[Theorem~7.9]{bakifm}}]\label{thmbakifmcut}
	For any extended surface $\Sigma$,
	the groupoid $\C(\Sigma)$ of     cut systems on  $\Sigma$ is 
	connected and simply-connected.
\end{theorem}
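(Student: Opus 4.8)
The plan is to bootstrap the statement from the connectivity and simple-connectivity of the classical cut system complex of Hatcher--Thurston \cite{hatcherthurston} and Harer \cite{harer}, following \cite{bakifm}. First I would reduce to the case that $\Sigma$ is connected: a cut is a connected curve, so a cut system on $\Sigma_1\sqcup\Sigma_2$ is just a pair of cut systems on the summands and the $\barF$- and $\barS$-moves act componentwise, giving an isomorphism $\C(\Sigma_1\sqcup\Sigma_2)\cong\C(\Sigma_1)\times\C(\Sigma_2)$; a product of connected and simply-connected groupoids is again connected and simply-connected. So assume $\Sigma$ connected of genus $g$ with $b$ boundary components, and call a cut system $C$ \emph{principal} if $\cut_C\Sigma$ is connected (equivalently, if no cut of $C$ has its two sides on different components of $\cut_C\Sigma$). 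An Euler characteristic count shows a principal cut system has exactly $g$ cuts and that $\cut_C\Sigma$ is then a sphere with $2g+b$ holes; these are precisely the cut systems forming the Hatcher--Thurston cut system complex (in the version for surfaces with boundary due to Harer).

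For connectivity: as long as $\cut_C\Sigma$ is disconnected there is a cut $c\in C$ whose two boundary circles lie on two different components, and a $\barF$-move deletes it --- re-gluing two genus-zero pieces along a circle keeps genus zero, so the result is still a cut system. Iterating, every cut system is joined in $\C(\Sigma)$ to a principal one, so it suffices to connect principal cut systems. By Hatcher--Thurston and Harer any two principal cut systems are joined by a finite chain of elementary moves, each replacing a single cut $c$ by a cut $c'$ meeting $c$ transversally in one point and disjoint from the remaining cuts. Such an elementary move is realised in $\C(\Sigma)$ by a composite $\barF^{-1},\barS,\barF$: an inverse $\barF$-move introduces the boundary $\partial T$ of a one-holed-torus neighbourhood $T$ of $c\cup c'$ (so that $c$ refines $T$ into a pair of pants), the $\barS$-move inside $T$ passes from $c$ to its transversal $c'$, and a $\barF$-move deletes $\partial T$ again. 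Hence $\C(\Sigma)$ is connected.

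For simple-connectivity --- the technical heart --- take a loop in $\C(\Sigma)$ based at a principal cut system $C_0$. Using the five defining relations of $\C(\Sigma)$, in particular the commutation of moves with disjoint support and the relation coupling $\barF$ with $\barS$, I would normalise the loop to a ``sawtooth'' $C_0=D_0\to D_1\to\dots\to D_k=C_0$ in which every $D_i$ is principal and each leg is the standard realisation $\barF^{-1}\,\barS\,\barF$ of an elementary Hatcher--Thurston move (trivial legs allowed). This normalisation amounts to sliding the $\barF$-excursions of the loop past one another and past the $\barS$-moves, and is exactly where the explicit relation list must be used. The resulting edge-loop in the Hatcher--Thurston complex bounds, by its simple-connectivity \cite{hatcherthurston,harer}, a disc tiled by the Hatcher--Thurston relations. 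It then remains to rewrite each Hatcher--Thurston relation, via the standard realisation, as a word in $\barF$ and $\barS$ and to check --- one relation type at a time, a finite verification --- that this word is a consequence of the five relations of $\C(\Sigma)$. Composing these fillings shows the original loop is the identity in $\C(\Sigma)$, so $\C(\Sigma)$ is simply-connected; together with connectivity this is the claim.

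I expect the last paragraph to be the main obstacle: both the normalisation of an arbitrary loop to sawtooth form and the rewriting of the Hatcher--Thurston relations in terms of $\barF$ and $\barS$ are bookkeeping-heavy and depend on having precisely the right list of relations available for $\C(\Sigma)$. One must also dispose of the low-complexity surfaces (sphere, disc, annulus, one-holed and closed torus) by hand, since the one-holed-torus neighbourhoods needed to apply $\barS$ need not exist there, and keep track throughout of the marked points and orientations that the cuts carry.
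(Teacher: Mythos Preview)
The paper does not give its own proof of this statement: it is quoted verbatim as \cite[Theorem~7.9]{bakifm} and used as a black box, so there is nothing in the paper to compare your argument against. Your outline is essentially a sketch of the strategy that Bakalov--Kirillov themselves follow in \cite[Section~7]{bakifm}, namely reducing to the Hatcher--Thurston/Harer simple-connectivity of the cut system complex and then lifting the elementary moves and relations through the $\barF,\barS$ presentation.

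Your sketch is honest about where the work lies. The connectivity argument is fine. For simple-connectivity, the step you flag as the main obstacle --- normalising an arbitrary loop to sawtooth form using only the five relations, and then rewriting each Hatcher--Thurston 2-cell as a consequence of those relations --- is indeed the entire content of the theorem, and your proposal does not actually carry it out; it only asserts that it can be done. That is a genuine gap if this were meant to be a self-contained proof, but since the paper itself simply cites the result, your outline is at the right level of detail for what the paper requires. One small correction: the marked points and orientations on cuts are an extra layer not present in the classical Hatcher--Thurston complex, and Bakalov--Kirillov handle this by an additional fibration argument (their $\widetilde{\C}(\Sigma)\to\C(\Sigma)$), which you do not mention; without it the reduction to Hatcher--Thurston is incomplete.
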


Let $\phi : \Sigma \to \Sigma'$ be a mapping class, then $\phi$ sends cut systems on $\Sigma$ to cut systems on $\Sigma'$ and also moves between cut systems on $\Sigma$ to moves between cut systems on $\Sigma'$ (because moves are only defined based on incidences). This way, $\phi$ yields a functor $\C(\phi) : \C(\Sigma)\to\C(\Sigma')$ between groupoids which by Theorem~\ref{thmbakifmcut} is even determined by its object function.
Similarly, any sewing $s : \Sigma \to \Sigma'$ yields a functor $\C(s):\C(\Sigma)\to\C(\Sigma')$ which just regards any pair oppositely oriented  gluing boundaries of $\Sigma$ as a cut in $\Sigma'$. The functors assigned to mapping classes and sewings respect the relations holding in $\Surf$.
These considerations carry over from cuts to markings (for the action of mapping classes on the charts underlying the marking, one precomposes the chart with the inverse of the mapping class).

\begin{proposition}\label{propmark}
	Cut systems and markings on extended surfaces 
	 naturally form symmetric monoidal  functors
	\begin{align} \C : \Surf &\to \Grpd \ , \\
	\M : \Surf &\to \Grpd \ ,
	\end{align}where the monoidal product on $\Surf$ is disjoint union and the monoidal product on $\Grpd$ is the Cartesian product.
	\end{proposition}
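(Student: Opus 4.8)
The object assignments are already at hand: $\C$ sends an extended surface $\Sigma$ to its groupoid of cut systems $\C(\Sigma)$, and $\M$ sends it to its groupoid of markings $\M(\Sigma)$. On morphisms, recall that $\Surf$ is presented by the generating morphisms --- mapping classes and sewings --- subject to the separate composition relations and the mixed relation $s'\phi=\phi's$ of~\eqref{eqnmixedrelation}, and that each mapping class $\phi$ and each sewing $s$ has already been seen to induce a functor $\C(\phi),\C(s)$, resp.\ $\M(\phi),\M(s)$. The plan is to promote these data to strict symmetric monoidal functors, the key leverage being Theorems~\ref{thmbakifmcut} and~\ref{thmbakifm}: since $\C(\Sigma)$ and $\M(\Sigma)$ are connected and simply connected, there is exactly one morphism between any ordered pair of their objects, so any functor out of such a groupoid is completely determined by what it does on objects, two such functors coincide as soon as their object maps do, and the object map of a composite is the composite of the object maps. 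Hence, verifying that $\C$ and $\M$ are well-defined functors on all of $\Surf$ is equivalent to verifying that the induced maps on the \emph{sets} of cut systems, resp.\ markings, are functorial, and that the map induced by a diffeomorphism depends only on its isotopy class.

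At this set level the verification is elementary. An orientation-preserving diffeomorphism transports cut systems bijectively and markings bijectively (after pre-composing the charts with its inverse); isotopic diffeomorphisms induce the same map on isotopy classes, so the assignment factors through mapping classes. A sewing reinterprets each pair of oppositely oriented glued boundary circles as a single cut. These maps preserve identities, are compatible on the nose with composition of diffeomorphisms and of sewings, descend to isotopy classes, and carry the generating moves $\Z,\F,\B,\S$ (resp.\ $\barF,\barS$) to moves, because all of this data --- cuts, charts, graphs, and the moves relating them --- is defined purely through incidences of curves, graphs and boundary components, which diffeomorphisms and sewings respect. The one relation of $\Surf$ that needs care is the mixed relation $s'\phi=\phi's$: here one must unwind the definitions of the sewing $s'$ and of the mapping class $\phi'$ induced by $\phi$ and $s$, and observe that by construction the corresponding square of topological operations commutes strictly, so the two resulting maps on cut systems (resp.\ markings) agree. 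I expect this bookkeeping to be the main, though mild, obstacle in the proof.

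Finally, symmetric monoidality. A cut system on a disjoint union $\Sigma\sqcup\Sigma'$ is the same thing as a pair of cut systems, one on each piece, and the empty surface carries a unique empty cut system; this gives natural isomorphisms $\C(\Sigma\sqcup\Sigma')\cong\C(\Sigma)\times\C(\Sigma')$ and $\C(\emptyset)\cong\ast$, and likewise for $\M$. Naturality in both variables, compatibility with the symmetry isomorphisms (swapping the two summands corresponds to swapping the two Cartesian factors), and the associativity and unit coherence diagrams all reduce --- via the "determined on objects" principle above --- to the evident identities between the corresponding maps of sets, and therefore hold automatically. This produces the desired symmetric monoidal functors $\C,\M:\Surf\to\Grpd$.
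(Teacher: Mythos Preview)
Your proposal is correct and follows essentially the same approach as the paper, which does not give a formal proof but only the explanatory paragraph preceding the proposition: both rely on the contractibility Theorems~\ref{thmbakifm} and~\ref{thmbakifmcut} to reduce functoriality to the object level, on the observation that moves are defined via incidences and are therefore respected by mapping classes and sewings, and on the evident compatibility with disjoint union. Your write-up simply expands this sketch into a more detailed verification, in particular making explicit the check of the mixed relation~\eqref{eqnmixedrelation} and the symmetric monoidal coherence.
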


\subsection{Describing cut systems and markings via the Grothendieck construction}
As explained in \cite[Section~7.4]{bakifm}, there is a projection functor\begin{align}\pi_\Sigma : \FM(\Sigma)\to\C(\Sigma)\label{eqnprjectionfunctor}\end{align} sending a marking to its underlying cut system. 
The moves $\B$ and $\Z$ are sent to identities while $\F$ and $\S$ are sent to $\barF$ and $\barS$, respectively.

By means of this functor, we can see that the markings over a fixed extended surface form a \emph{category fibered in groupoids} over the cut systems on that surface.
To this end, let us first recall the relevant notions from  \cite[Definition~3.1]{hollander} (this definition is based on \cite{delignemumford}):
A functor $E: \cat{A}\to\cat{B}$ is called a \emph{category fibered in groupoids} (or said to exhibit $\cat{A}$ as a category fibered in groupoids over $\cat{B}$)  
\begin{xenumerate}
	\item if all lifting problems of the form
	\begin{equation}
\begin{tikzcd}
0 \ar[swap]{dd}{} \ar[]{rr}{} & & N\cat{A}  \ar{dd}{NE }   \\
& & \\
\Delta^1 \ar{rr}{ } \ar[dashed]{rruu}{}  & & N\cat{B}  
\end{tikzcd} 
\end{equation}
can be solved \label{cfig1}
\item and if for any diagram $a \la{f} b \ra{g} c$ in $\cat{A}$ and any morphism $h : E(a)\to E(c)$ in $\cat{B}$
making the diagram
	\begin{equation}
\begin{tikzcd}
 & & E(c)     \\
& & \\
E(a) \ar[]{rruu}{h}  & & E(b) \ar{ll}{ E(f)}  \ar{uu}[swap]{E(g) }
\end{tikzcd} 
\end{equation}
commute, 
there is a unique $h' : a \to c$ with $E( h')=h$.  \label{cfig2}

\end{xenumerate}
(Note that point~\ref{cfig2} uses conventions dual to those in \cite[Definition~3.1]{hollander} because the categories fibered in groupoids needed in this article  correspond to category-valued cosheaves instead of sheaves as in \cite{hollander}.)

\begin{lemma}\label{lemmafmfiberedingrpds}
For any extended surface $\Sigma$, the canonical functor $\pi_\Sigma : \FM(\Sigma) \to \C(\Sigma)$ exhibits $\FM(\Sigma)$ as a category fibered in groupoids over $\C(\Sigma)$.  
\end{lemma}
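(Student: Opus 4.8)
The plan is to verify directly the two defining conditions \ref{cfig1} and \ref{cfig2} of a category fibered in groupoids for the functor $\pi_\Sigma : \FM(\Sigma) \to \C(\Sigma)$, using the explicit description of the moves generating $\FM(\Sigma)$ and $\C(\Sigma)$ together with the fact (from \cite{bakifm}, recalled as Theorem~\ref{thmbakifm} and Theorem~\ref{thmbakifmcut}) that both categories are groupoids which are connected and simply connected. The key structural input is that on morphisms $\pi_\Sigma$ sends $\B$ and $\Z$ to identities and sends $\F$, $\S$ to $\barF$, $\barS$ respectively; so the fiber over a fixed cut system $C$ is the full subcategory of $\FM(\Sigma)$ on markings refining $C$, and its morphisms are exactly the composites of $\B$- and $\Z$-moves (the moves that do not change the underlying cut system). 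Since $\FM(\Sigma)$ is a groupoid, this fiber is automatically a groupoid, which is the qualitative reason the statement is plausible.

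For condition \ref{cfig1}, lifting a $1$-simplex of $N\C(\Sigma)$ against $\pi_\Sigma$ amounts to: given a marking $\Gamma$ over a cut system $C$ and a morphism $\gamma : C \to C'$ in $\C(\Sigma)$, produce a morphism $\widetilde{\gamma} : \Gamma \to \Gamma'$ in $\FM(\Sigma)$ with $\pi_\Sigma(\widetilde\gamma) = \gamma$. First I would reduce to the case where $\gamma$ is a single generating move $\barF$ or $\barS$ (possibly glued into a larger surface), since a general morphism in $\C(\Sigma)$ is a composite of such and lifts can be composed. For a single $\barF$ applied in a cylindrical region: the marking $\Gamma$ restricted to the two standard-sphere pieces glued along that cut determines, after applying an $\F$-move, a marking $\Gamma'$ refining $C'$, and the $\F$-move is by definition a morphism $\Gamma \to \Gamma'$ over $\barF$. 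Similarly $\barS$ lifts to $\S$. The degeneracy case (a $1$-simplex collapsing to an identity) lifts by an identity. A small point to handle carefully here is that the $\F$- and $\S$-moves in $\FM(\Sigma)$ are defined to act on markings whose graph is in a specific ``standard'' position relative to the cut, so before applying the move one must first use $\B$- and $\Z$-moves — which lie in the fiber and hence project to identities — to bring $\Gamma$ into standard position; this does not change the projection and so still produces a lift of $\gamma$.

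For condition \ref{cfig2}: given $a \xleftarrow{f} b \xrightarrow{g} c$ in $\FM(\Sigma)$ and $h : \pi_\Sigma(a) \to \pi_\Sigma(c)$ in $\C(\Sigma)$ with $h \circ \pi_\Sigma(f) = \pi_\Sigma(g)$, one must produce a \emph{unique} $h' : a \to c$ over $h$. For existence, set $h' := g \circ f^{-1}$ (using that $\FM(\Sigma)$ is a groupoid); then $\pi_\Sigma(h') = \pi_\Sigma(g)\circ \pi_\Sigma(f)^{-1} = h$ by the hypothesis. For uniqueness, suppose $h'_1, h'_2 : a \to c$ both project to $h$; then $h'_2 (h'_1)^{-1} : a \to a$ is an automorphism of $a$ in $\FM(\Sigma)$ projecting to $\id_{\pi_\Sigma(a)}$, i.e.\ it lies in the fiber over $\pi_\Sigma(a)$; since $\FM(\Sigma)$ is simply connected (Theorem~\ref{thmbakifm}) it has trivial automorphism groups, so $h'_2(h'_1)^{-1} = \id_a$ and $h'_1 = h'_2$.

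The step I expect to be the main obstacle is the bookkeeping in condition \ref{cfig1}, specifically making precise — referring to the move definitions in \cite[Section~4]{bakifm} — that every generating move of $\C(\Sigma)$ genuinely lifts through $\pi_\Sigma$ after the preliminary normalization by fiber moves, and that these lifts assemble compatibly for composites (so that the lifting property for $N\C(\Sigma)$, not just for single generators, holds). This is essentially a matter of chasing the explicit combinatorics of markings versus cut systems; the groupoid and simple-connectedness facts make the uniqueness half of \ref{cfig2} and the automorphism-triviality arguments immediate, so the real content is localized in the compatibility of lifts of moves. One clean way to package the whole argument, which I would adopt if the move-by-move check becomes unwieldy, is to invoke that $\FM(\Sigma)$ and $\C(\Sigma)$ are both groupoids: for a functor between groupoids, condition \ref{cfig2} is automatic (take $h' = g f^{-1}$, unique because a functor between groupoids that is faithful on automorphism groups — here forced by simple-connectedness — reflects identities), and condition \ref{cfig1} for the nerves reduces exactly to the statement that $\pi_\Sigma$ is an isofibration, i.e.\ every isomorphism in $\C(\Sigma)$ with a specified lift of its source lifts to an isomorphism in $\FM(\Sigma)$ — which is precisely what the move-lifting discussion above establishes.
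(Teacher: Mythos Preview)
Your proof is correct and follows essentially the same approach as the paper: lift the generating moves $\barF$, $\barS$ to $\F$, $\S$ for condition~\ref{cfig1}, then invoke contractibility of $\FM(\Sigma)$ (hence trivial automorphism groups) for condition~\ref{cfig2}. The paper is terser and asserts that the lifts exist ``for any given start value'' by definition, omitting your preliminary normalization by $\B$- and $\Z$-moves, but your extra caution there does no harm.
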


\begin{proof}
It follows from the definition of $\FM(\Sigma)$ that the lifting problem 
	\begin{equation}
	 \begin{tikzcd}
	 0 \ar[swap]{dd}{} \ar[]{rr}{\Gamma} & & N\FM(\Sigma)  \ar{dd}{N\pi_\Sigma }   \\
	 & & \\
	 \Delta^1 \ar{rr}{\mu } \ar[dashed]{rruu}{\widetilde \mu}  & & N\C(\Sigma) \ .  
	 \end{tikzcd} 
	 \end{equation}
can be solved if $\mu$ is one of the moves $\barF$ or $\barS$ (because by definition these lift to $\F$ and $\S$, respectively, for any given start value). From this, we deduce that the lifting problem can be solved when $\mu$ is an arbitrary morphism in $\C(\Sigma)$, which amounts to property~\ref{cfig1} above.
Contractibility of the groupoids $\FM(\Sigma)$ and $\C(\Sigma)$ gives us property~\ref{cfig2}.
\end{proof}

For a cut system $C$ on $\Sigma$, denote by
 \begin{align} \m_\Sigma (C):=  \pi_\Sigma^{-1}(C)  \label{eqndefmsigma}   \end{align} the fiber of the projection functor $\pi_\Sigma$ from \eqref{eqnprjectionfunctor} over $C$. 
 From  Theorem~\ref{thmbakifm} and~\ref{thmbakifmcut} we can deduce the equivalence
 \begin{align}
 \m_\Sigma (C)\simeq \star \label{eqnmsigmacontrac}
 \end{align}
 of categories.

 In the sequel, it will be important to reconstruct $\M(\Sigma)$ from the fibers \eqref{eqndefmsigma}.
 To this end, we will use the \emph{Grothendieck construction}, a  classical construction in category theory, see e.g.\
 \cite[Section~I.5]{maclanemoerdijk}, that we will also use for various other constructions later: For a functor $F: \cat{A} \to \Cat$ from a category $\cat{A}$ to  the category of categories, its Grothendieck construction $\int F$ is defined as the category of pairs $(a,x)$, where $a\in \cat{A}$ and $x\in F(a)$. A morphism $(a,x) \to (a',x')$ is a pair $(f,\alpha)$ of a morphism $f: a \to a'$ in $\cat{A}$ and a morphism $\alpha : F(f)x\to x'$ in $F(a')$. The category $\int F$ comes with a natural functor $\int F\to \cat{A}$ sending an object $(a,x) \in \int F$ to $a$ and a morphism $(f,\alpha):(a,x)\to(a',x')$ in $\int F$ to $f$. 
 For later purposes, we record the useful fact that the Grothendieck construction $\int F$ of a functor $F:\cat{A}\to\Cat$ can be described as certain type of lax colimit of the functor $F$ (referred to as \emph{oplax colimit} in the most common convention);
 this statement can be found e.g.\ in \cite[Theorem~10.2.3]{johnsonyau} or in \cite{lax} within a more general framework.\label{pagereflax}
 One should think of $\int F$ as the result of gluing together in a categorical way the categories $F(a)$ for $a\in \cat{A}$ according to a prescription determined by $\cat{A}$.
 \myskip

 Now from Lemma~\ref{lemmafmfiberedingrpds} and \cite[Theorem~1.2]{hollander} (or rather the dual version) it follows that the fibers \eqref{eqndefmsigma} form a (pseudo-)functor
 \begin{align} \m_\Sigma : \C(\Sigma) \to \Grpd \label{eqnsmallmfunctor}
 \end{align}
 whose Grothendieck construction $\int \m_\Sigma$ comes with a canonical equivalence
 \begin{align}
 \int\m_\Sigma \xrightarrow{\ \simeq \ }
 \FM(\Sigma)\label{markingseqnviagrothendieck}
 \end{align}
 induced by the inclusions $\m_\Sigma(C)\to \FM(\Sigma)$.
This leads to a key observation that we will need later: We can write the groupoid of markings on a fixed surface as the result of categorically gluing together the markings over  varying cut systems, where the groupoid of cuts systems gives us the gluing prescription.

 \subsection{The categories of colored cuts and colored markings\label{seccolcutmarking}}
We will now introduce a new category of cut systems and markings on a fixed extended surface whose objects are cut systems and markings, respectively, equipped with additional data, namely a coloring.
The usefulness of this definition will become apparent in the next section, where it will allow us to relate marked blocks for different markings.

 For a connected extended surface $\Sigma$ with $n$ boundary components, a \emph{colored cut system} $U$ on $\Sigma$ is a pair $C$ of a cut system on $\Sigma$ and a subset $S$ of the set of cuts of $C$ such that
 \begin{align} |S|+n\ge 1 \ .   \label{eqncoloredcondition} \end{align}
 A cut that lies in the distinguished subset $S$ of all the cuts will be referred to as a  \emph{colored cut}. A cut which is not colored will be referred to as \emph{uncolored cut}. 
 A colored cut system on a non-connected extended surface is defined as a colored cut system on every connected component. 
 
 The colored cut systems on an extended surface $\Sigma$ form a category $\Chat(\Sigma)$
 in the following way: Objects are colored cut systems on $\Sigma$.
 The morphisms are generated by two types of moves:
 
 \begin{enumerate}
 	
 	\item[(U)] For any colored cut systems $U =(C,S)$ and any proper subset $S'\subset S$ such that $(C,S')$ is still a colored cut system (meaning that \eqref{eqncoloredcondition} must be satisfied), there is a non-invertible morphism \begin{align} U =(C,S) \xrightarrow{\  S'\subset S  \ } (C,S') \ , \end{align} called \emph{uncoloring}. In other words, there is a morphism that implements forgetting a coloring of subfamily of cuts if enough colored cuts are left to ensure that requirement \eqref{eqncoloredcondition} is met.
 	
 	\item[(AM)] Between colored cut systems, we have \emph{admissible moves}, i.e.\ a move between the underlying cut systems, as defined for $\C(\Sigma)$, such that this move does not affect the colored cuts. 
 	A more formal  definition of an admissible move may be given as follows: \label{notationadmissiblemoves}
 	For a colored cut system $U$ on $\Sigma$ with underlying cut system $C$, denote by $\Sigma_U$ the surface obtained from $\Sigma$ by cutting at all \emph{colored} cuts. Then $C$ induces a cut system $C_U$ on $\Sigma_U$. By Proposition~\ref{propmark} the re-sewing $s_U : \Sigma_U \to \Sigma$ gives rise to a functor \begin{align} \label{eqnsewingfunctorA} \C(s_U) : \C(\Sigma_U) \to \C(\Sigma)\end{align} sending $C_U$ to $C$. 
 	With this notation, we define an \emph{admissible move} $U\to V$ between colored cut systems $U$ and $V$ on $\Sigma$ as follows: Such an admissible move only exists when $\Sigma_U = \Sigma_V$, and in that case, it is defined as a move $\mu : C\to C'$ 
 	between the underlying cut systems that is the image of a move $C_U \to C_V$ of cut systems on $\Sigma_U=\Sigma_V$ under \eqref{eqnsewingfunctorA}. 
 	
 	\end{enumerate}
 \spaceplease
 We impose the following relations:
 
 \begin{enumerate}\label{relfmhat}
 	\item[(RU)] Suppose $U =(C,S)$ is a colored cut system and $S'' \subset S' \subset S$ proper inclusions of subsets such that $(\Gamma,S')$ and $(\Gamma,S'')$ are still colored cut systems, then the composition\label{relfmhatru}
 	\begin{align}
 	U =(\Gamma,S) \xrightarrow{\  S'\subset S  \ } (\Gamma,S') \xrightarrow{\  S''\subset S'  \ } (\Gamma,S'')
 	\end{align} of uncolorings corresponding to $S''\subset S'$ and $S'\subset S$, respectively, is equal to the uncoloring
 		\begin{align}
 	U =(\Gamma,S) \xrightarrow{\  S''\subset S  \ } (\Gamma,S'')
 	\end{align}
 	corresponding to $S''\subset S$. 
 	
 	\item[(RM)] For the composition of admissible moves, the relations for the underlying moves  that hold in the groupoid $\C(\Sigma)$ of cut systems are inherited.\label{relfmhatrm}

 	\item[(C)] Uncolorings and admissible moves commute in the obvious way (provided, of course, that both compositions exist).\label{relfmhatrc}
 	
 	\end{enumerate}
 
 \begin{definition}
 	For an extended surface $\Sigma$, we call the category $\Chat(\Sigma)$
  the \emph{category  of colored cut systems $\Sigma$}.
  \end{definition}
 The category $\Chat(\Sigma)$ comes with a canonical functor \begin{align} \label{eqncanfunctorpi} Q_\Sigma : \Chat(\Sigma) \to \C(\Sigma) \end{align} which forgets the coloring, sends uncolorings to identities and admissible moves to the underlying moves. Note that (RM) ensures that this defines really a functor.

   We can generalize Proposition~\ref{propmark}, in which we stated that cut systems 
   can be \emph{functorially} assigned to surfaces, to \emph{colored} cuts: Indeed, a morphism $f: \Sigma \to \Sigma'$ induces a functor $\Chat(f):\Chat(\Sigma)\to\Chat(\Sigma')$
 which satisfies $Q_{\Sigma'}\Chat(f)=\C(f)Q_\Sigma$ and sends colored cuts to colored cuts and uncolored cuts to uncolored cuts. For the definition of $\Chat(s):\Chat(\Sigma)\to\Chat(\Sigma')$ for a sewing $s : \Sigma \to \Sigma'$, we additionally have to prescribe that boundary components which are glued together through the sewing $s$ give rise to a \emph{colored} cut (assigning an \emph{uncolored} cut instead would generally violate \eqref{eqncoloredcondition}).
 
 \begin{proposition}\label{propnatdec0}
 	Colored 	cut systems on extended surfaces  naturally form a symmetric monoidal functor
 	\begin{align} \Chat : \Surf \to \Cat  \ . 
 	\end{align} 
 \end{proposition}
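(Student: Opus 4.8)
The plan is to build everything on Proposition~\ref{propmark}, which already supplies the symmetric monoidal functor $\C:\Surf\to\Grpd$ on the level of cut systems; passing to $\Chat$ only adds the bookkeeping of the distinguished subset of coloured cuts, so each coherence check for $\Chat$ will be reduced either to the corresponding statement for $\C$ or to an elementary verification about that subset. Concretely, for a morphism $f:\Sigma\to\Sigma'$ in $\Surf$ we define $\Chat(f):\Chat(\Sigma)\to\Chat(\Sigma')$ as follows. On an object $U=(C,S)$ the underlying cut system is sent to $\C(f)(C)$, with coloured subset $\C(f)(S)$ when $f$ is a mapping class, and $S$ together with every cut arising from a glued boundary pair when $f$ is a sewing. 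On generators, an uncoloring $U=(C,S)\xrightarrow{S'\subset S}(C,S')$ is sent to the uncoloring forgetting exactly the same cuts (the newly coloured sewing cuts, if any, being retained), and an admissible move with underlying move $\mu:C\to C'$ of cut systems is sent to the one with underlying move $\C(f)(\mu)$.

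First I would check that $\Chat(f)$ is well defined on objects, which amounts to verifying that the inequality~\eqref{eqncoloredcondition} is preserved. For mapping classes this is immediate, since neither $|S|$ nor the number $n$ of boundary components changes on any connected component. For a sewing gluing one incoming to one outgoing boundary component there are two cases. If both boundary components lie on the same component, that component has $n\ge 2$ boundary components and, after sewing, has $n-2$ boundary components and $|S|+1$ coloured cuts, so the new value of the left-hand side of~\eqref{eqncoloredcondition} is $(|S|+1)+(n-2)=|S|+n-1\ge 1$; if they lie on distinct components with parameters $(n_i,S_i)$, each satisfying~\eqref{eqncoloredcondition}, the merged component has $n_1+n_2-2$ boundary components and $|S_1|+|S_2|+1$ coloured cuts, and $(|S_1|+|S_2|+1)+(n_1+n_2-2)=(|S_1|+n_1)+(|S_2|+n_2)-1\ge 1$. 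A sewing gluing several pairs is a composite of single-pair sewings, so the object assignment is everywhere defined. Well-definedness on morphisms requires, for admissible moves under a sewing $s:\Sigma\to\Sigma'$, the key observation that cutting $\Sigma'$ along the freshly coloured $s$-cut undoes $s$; thus $\Sigma'_{\Chat(s)(U)}=\Sigma_U$ and, under this identification, the re-sewing $s_{\Chat(s)(U)}$ equals $s\circ s_U$, so that $\C(s)(\mu)$ is the image under $\C(s_{\Chat(s)(U)})$ of the very move of cut systems on $\Sigma_U$ that witnessed admissibility of $\mu$ --- hence $\C(s)(\mu)$ is admissible. Finally, $\Chat(f)$ respects the relations defining $\Chat(\Sigma)$: (RU) and (C) because uncolorings are tracked purely set-theoretically and commuting squares go to commuting squares, and (RM) because it reduces to the relations among the underlying moves in $\C(\Sigma')$, which hold by Proposition~\ref{propmark}. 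By construction $Q_{\Sigma'}\Chat(f)=\C(f)Q_\Sigma$.

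Next I would check functoriality in $f$. We have $\Chat(\id_\Sigma)=\id_{\Chat(\Sigma)}$, and $\Chat(g)\Chat(f)=\Chat(gf)$ agrees with it on underlying cut systems by functoriality of $\C$, while on colourings it is a direct inspection: composites of mapping classes transport colourings along, composites of sewings produce the union of the two families of freshly coloured cuts, and for a mapping class $\phi$ composed with a sewing $s$ the mixed relation~\eqref{eqnmixedrelation} $s'\phi=\phi's$ lifts because $\C(s')\C(\phi)=\C(\phi')\C(s)$ by Proposition~\ref{propmark} and $\phi$ carries the $s$-cut to the $s'$-cut while leaving the remaining colouring data unchanged. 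Since every morphism of $\Surf$ is a composite of mapping classes and single-pair sewings, $\Chat$ is a functor. Symmetric monoidality is essentially built into the definition: a colored cut system on $\Sigma_1\sqcup\Sigma_2$ is precisely a colored cut system on each of $\Sigma_1$ and $\Sigma_2$ (and likewise for morphisms), so grouping connected components gives a canonical isomorphism $\Chat(\Sigma_1\sqcup\Sigma_2)\cong\Chat(\Sigma_1)\times\Chat(\Sigma_2)$, and $\Chat(\emptyset)$ is the terminal category. The associativity, unit and symmetry constraints are induced by those of disjoint union of surfaces and Cartesian product of categories, and their coherence follows from the symmetric monoidality of $\C$ (Proposition~\ref{propmark}) together with the fact that the forgetful functors $Q$ detect it; thus $\Chat:\Surf\to\Cat$ is symmetric monoidal.

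The only part that is not routine bookkeeping or an immediate appeal to Proposition~\ref{propmark} is the behaviour of the coloring condition~\eqref{eqncoloredcondition} and of admissible moves under sewing: the former needs the small case distinction above (using that sewing two boundary components of the same component forces that component to have at least two boundary components), and the latter rests on the identity $\Sigma'_{\Chat(s)(U)}=\Sigma_U$ expressing that re-colouring the gluing cut is inverse to the sewing. I expect this to be the main --- though still modest --- obstacle; everything else is formal.
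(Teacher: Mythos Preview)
Your proposal is correct and follows exactly the approach the paper takes: the paper only records, in the paragraph immediately preceding the proposition, the construction of $\Chat(f)$ (colored cuts go to colored cuts, sewing-created cuts are declared colored so that \eqref{eqncoloredcondition} is not violated, and $Q_{\Sigma'}\Chat(f)=\C(f)Q_\Sigma$), leaving the remaining verifications implicit. You have simply supplied those routine checks in detail.
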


 The important fact about the categories of colored cut systems is that they are still contractible despite the newly introduced uncoloring morphisms:

 \begin{theorem}\label{thmcolorlego}
 	For any extended surface $\Sigma$, the category $\Chat(\Sigma)$ of colored cut systems on $\Sigma$ is contractible.
 	\end{theorem}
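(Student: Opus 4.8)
\emph{The plan.} I would realise $\Chat(\Sigma)$ as a Grothendieck construction over a poset of ``colored curve configurations'' whose fibers are groupoids of the form $\C(\cdot)$, and then deduce the theorem from the contractibility of these fibers (Theorem~\ref{thmbakifmcut}) via Thomason's homotopy colimit theorem. Introduce the poset $\cat{X}(\Sigma)$ whose objects are (isotopy classes of) finite families $\Xi$ of disjoint oriented simple closed curves with a marked point in the interior of $\Sigma$ satisfying $|\Xi\cap\Sigma_i|+|\pi_0(\partial\Sigma_i)|\ge 1$ on every connected component $\Sigma_i$ of $\Sigma$ (so that $\Xi$ is, automatically, the colored part of some colored cut system), with a unique morphism $\Xi\to\Xi'$ whenever $\Xi\supseteq\Xi'$. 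There is an evident functor $R_\Sigma:\Chat(\Sigma)\to\cat{X}(\Sigma)$ sending $(C,S)$ to $S$, an uncoloring to the corresponding inclusion, and an admissible move to an identity; the relations (RU), (RM), (C) make this well defined. The structural claim --- the analogue of the identification $\int\m_\Sigma\xrightarrow{\ \simeq\ }\FM(\Sigma)$ in \eqref{markingseqnviagrothendieck} --- is that $R_\Sigma$ identifies $\Chat(\Sigma)$ with the Grothendieck construction $\int_{\cat{X}(\Sigma)}\mathfrak{C}_\Sigma$ of the functor $\mathfrak{C}_\Sigma:\cat{X}(\Sigma)\to\Grpd$, $\Xi\mapsto\C(\cut_\Xi\Sigma)$, which sends an inclusion $\Xi\supseteq\Xi'$ to the functor induced, via Proposition~\ref{propmark}, by the sewing $\cut_\Xi\Sigma\to\cut_{\Xi'}\Sigma$ re-gluing the boundary pairs created by the curves of $\Xi\setminus\Xi'$. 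Indeed, unwinding the definitions, the fiber of $R_\Sigma$ over $\Xi$ consists of the cut systems on $\Sigma$ whose colored part is exactly $\Xi$ with only admissible moves among them, and the bijection $C\leftrightarrow C\setminus\Xi$ identifies it with $\C(\cut_\Xi\Sigma)$ by the very definition of admissible move. As always, $\cut_\Xi\Sigma$ is only defined up to diffeomorphism, but --- as used repeatedly in Section~\ref{secderivedblocks} --- only the incidence combinatorics matters, and the bookkeeping can be carried over from there.

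\emph{Reduction and conclusion.} Granting this, each fiber $\mathfrak{C}_\Sigma(\Xi)=\C(\cut_\Xi\Sigma)$ is contractible by Theorem~\ref{thmbakifmcut}, so by Thomason's theorem $N\Chat(\Sigma)\simeq\hocolimsub{\Xi\in\cat{X}(\Sigma)}N\mathfrak{C}_\Sigma(\Xi)\simeq\hocolimsub{\Xi\in\cat{X}(\Sigma)}\star\simeq N\cat{X}(\Sigma)$; thus it suffices to show that $\cat{X}(\Sigma)$ is contractible. As $\cat{X}(\Sigma)=\prod_i\cat{X}(\Sigma_i)$ over the connected components, I may assume $\Sigma$ connected. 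If $\Sigma$ has non-empty boundary, then the empty configuration lies in $\cat{X}(\Sigma)$ and every object $\Xi$ admits the unique morphism $\Xi\to\emptyset$, so $\emptyset$ is a terminal object and $\cat{X}(\Sigma)$ is contractible. If $\Sigma$ is closed, fix a null-homotopic simple closed curve $\gamma$; all such are isotopic, $\{\gamma\}$ extends to a cut system, so $\{\gamma\}\in\cat{X}(\Sigma)$, and since $\gamma$ has geometric intersection number zero with every simple closed curve, $G(\Xi):=\Xi\cup\{\gamma\}$ is a well-defined endofunctor of $\cat{X}(\Sigma)$. The ``delete $\gamma$'' and ``delete everything but $\gamma$'' morphisms define natural transformations $G\Rightarrow\id_{\cat{X}(\Sigma)}$ and $G\Rightarrow\mathrm{const}_{\{\gamma\}}$, whence on nerves $\id\simeq|NG|\simeq\mathrm{const}$ and $\cat{X}(\Sigma)$ is contractible in this case as well.

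\emph{Main obstacle.} The real work is the structural claim $\Chat(\Sigma)\cong\int_{\cat{X}(\Sigma)}\mathfrak{C}_\Sigma$: one must match the generators and relations (U), (AM), (RU), (RM), (C) of $\Chat(\Sigma)$ with the morphisms and composition of the Grothendieck construction, and remain disciplined about the ``defined only up to diffeomorphism'' ambiguity for cut-open surfaces --- exactly as in the passage to \eqref{markingseqnviagrothendieck}. Once this is secured, the contractibility of the base $\cat{X}(\Sigma)$, hence of $\Chat(\Sigma)$, is formal apart from the single geometric input --- needed only in the closed case --- that a null-homotopic simple closed curve can be placed disjointly from any prescribed finite curve system.
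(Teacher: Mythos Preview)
Your overall architecture---realise $\Chat(\Sigma)$ as a Grothendieck construction over a base poset with fibers $\C(\cut_{\Xi}\Sigma)$, apply Thomason and Theorem~\ref{thmbakifmcut} to reduce to contractibility of the base, then contract the base directly---is different from the paper's and is attractive: the paper instead localises $\Chat(\Sigma)$ at uncolorings to a Kan complex, shows the resulting map to $N\C(\Sigma)$ is a Kan fibration with contractible fibers (the fibers of $Q_\Sigma$ have initial objects), and invokes Theorem~\ref{thmbakifmcut}. So you fiber over the \emph{colored} part and push the difficulty into the base poset $\cat{X}(\Sigma)$, while the paper fibers over the \emph{full} cut system and handles the fibers by an elementary observation. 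Your approach makes the role of Theorem~\ref{thmbakifmcut} more transparent (it is applied fiberwise rather than at the end), but it trades this for a new contractibility problem.

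There is, however, a genuine gap in your closed-surface argument. The endofunctor $G(\Xi)=\Xi\cup\{\gamma\}$ is \emph{not} well defined on isotopy classes. It is true that a null-homotopic curve has geometric intersection number zero with everything, so a disjoint placement exists; but the isotopy class of the resulting configuration depends on \emph{which} complementary component of $\Sigma\setminus\Xi$ you put $\gamma$ in. Concretely, take $\Sigma$ of genus $3$ and $\Xi=\{\alpha\}$ with $\alpha$ separating into pieces $P_1$ (genus $1$) and $P_2$ (genus $2$). Placing $\gamma$ in $P_1$ versus $P_2$ yields configurations whose complements have different topological types (one has a genus-$1$ surface with two boundary circles, the other a genus-$2$ surface with two boundary circles), so they are not isotopic. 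Hence $G$ is not a function on objects, let alone a functor, and the natural transformations $G\Rightarrow\id$ and $G\Rightarrow\mathrm{const}_{\{\gamma\}}$ do not make sense as stated. The contractibility of $\cat{X}(\Sigma)$ in the closed case is true (it follows \emph{a posteriori} from the paper's theorem together with your Grothendieck identification), but your direct argument does not establish it. To rescue your approach you would need a genuinely different contraction of $\cat{X}(\Sigma)$ in the closed case, or to abandon the base-contractibility route and argue more in the spirit of the paper.
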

 
 \begin{proof}
 \begin{pnum}
 	
 	\item We consider the subcategory $\catf{U}(\Sigma) \subset \Chat(\Sigma)$ with the same objects, but whose morphisms are generated only by uncolorings and define $\catf{L}(\Sigma)$ as the $\infty$-categorical localization of the $\infty$-category $N\Chat(\Sigma)$ at $\catf{U}(\Sigma)$. 
 	Now the canonical map $  N\Chat(\Sigma)\to \catf{L}(\Sigma)$ induces an equivalence
 	\begin{align}  |N\Chat(\Sigma)| \xrightarrow{\ \simeq \ } |\catf{L}(\Sigma)| \ . 
 	\end{align}
 	Moreover, $\catf{L}(\Sigma)$ is a Kan complex, i.e.\ an $\infty$-groupoid since $\catf{U}(\Sigma)$ contains all the non-invertible morphisms in $\Chat(\Sigma)$. 
 	For concreteness, we choose in this proof the Dwyer-Kan model \cite{dwyerkan} for  localization.\label{Kanproof1} 
 	
 	\item Since the functor $Q_\Sigma$ from \eqref{eqncanfunctorpi} sends all morphisms in $\catf{U}(\Sigma)$ to identities, it induces a simplicial map
 	$\omega _\Sigma : \catf{L}(\Sigma) \to N\C(\Sigma)$.
 	In this step, we prove that $\omega _\Sigma$ is a Kan fibration, i.e.\
 	$\omega _\Sigma$ allows for  solutions to the lifting problems
 	\begin{equation} \label{eqnliftingproblemKan}
 	\begin{tikzcd}
 	\Lambda_k^n \ar[swap]{dd}{\iota} \ar[]{rr}{\xi} & & \catf{L}(\Sigma)  \ar{dd}{\omega _\Sigma}   \\
 	& & \\
 	\Delta^n \ar{rr}{\sigma } \ar[dashed]{rruu}{\widetilde \sigma}  & & N\C(\Sigma) \ , 
 	\end{tikzcd} 
 	\end{equation}
 	where $\iota : \Lambda_k^n \to \Delta^n$ for $n\ge 0$ and $0\le k\le n$ are the horn the inclusions. 
 	\begin{itemize}
 		\item 
 	For $n=0$, this just means that $\omega _\Sigma$ is surjective on 0-simplices, which we can easily observe to be true.

 	\item For $n=1$,
 	we need to solve the lifting problem
 		\begin{equation}\label{eqnliftingproblemn1}
 	\begin{tikzcd}
 	0 \ar[swap]{dd}{\iota} \ar[]{rr}{U} & & \catf{L}(\Sigma)  \ar{dd}{\omega _\Sigma}   \\
 	& & \\
 	\Delta^1 \ar{rr}{\mu } \ar[dashed]{rruu}{\widetilde \mu}  & & N\C(\Sigma) \   
 	\end{tikzcd} 
 	\end{equation}
 	(the lifting problem for the inclusion $1\to \Delta^1$ can be solved by passing to inverse morphisms).
 	As one easily sees, it suffices to prove this lifting property if $\mu$ is a move, i.e.\ $\barF$ or $\barS$, or an inverse of these moves.
 	We will now prove that such a lift indeed exists (we illustrate the strategy by means of an example in Figure~\ref{figliftS}):
 Let $\mu : C\to C'$ be a move in $\C(\Sigma)$. On each connected component of $\Sigma$, the cut system $C$ has a boundary component or a cut that will not be affected by $\mu$ in the sense of (AM) on page~\pageref{notationadmissiblemoves}
 	  (this can be observed for both $\barF$ and $\barS$). This implies that for any $U $ with $Q_\Sigma(U)=C$, we find a colored cut system $V$ connected to $U$ by a zigzag of uncolorings (generally, the uncolorings themselves will not be enough; zigzags of them are needed)
 	  such that $\mu$ does not affect the colored cuts of $V$.
 	  As a consequence, $\mu$
 	  induces an admissible move $\widehat \mu : V\to W$. In particular, $Q_\Sigma(\widehat \mu)=\mu$.
 	  Then the zigzag
 	  \begin{align}
 	 U\xleftrightarrow{ \ \ \ }     V \xrightarrow{\   \widehat{\mu}   \ }W
  	  \end{align}
 	 in $\Chat(\Sigma)$ is mapped to $\mu$ under $Q_\Sigma$ because $Q_\Sigma$ sends uncolorings to identities, and it gives us a 1-morphism $\widetilde \mu : U \to W$ in $\catf{L}(\Sigma)$ with $\omega _\Sigma(\widetilde \mu)=\mu$. 
 	 This shows that the $\widetilde \mu$ solves the lifting problem. The same argument applies to the inverses of moves.

 	 	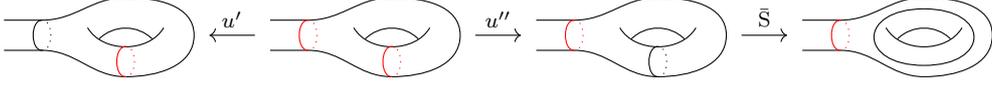
\begin{figure}[h]
 	 	\centering
 	 	\begin{tikzpicture}[scale=0.20, style/.style={circle, inner sep=0pt,minimum size=0mm},
 	 	poin/.style={circle, inner sep=0.2pt,minimum size=0.2mm},decoration={
 	 		markings,
 	 		mark=at position 0.5 with {\arrow{>}}}]
 	 	\node [style=none] (37) at (-24.75, 3.5) {};
 	 	\node [style=none] (38) at (-21.25, 3.5) {};
 	 	\node [style=none] (39) at (-25.5, 4.25) {};
 	 	\node [style=none] (40) at (-20.5, 4.25) {};
 	 	\node [style=none] (41) at (-23, 3) {};
 	 	\node [style=none] (43) at (-18.5, 3.75) {};
 	 	\node [style=none] (44) at (-23, 1) {};
 	 	\node [style=none] (45) at (-28.5, 4.75) {};
 	 	\node [style=none] (46) at (-28.5, 2.75) {};
 	 	\node [style=none] (47) at (-23, 6.25) {};
 	 	\node [style=none] (75) at (-17.5, 3.75) {};
 	 	\node [style=none] (76) at (-14.5, 3.75) {};
 	 	\node [style=none] (77) at (-16, 4.8) {\footnotesize$u'$};
 	 	\node [style=none] (78) at (-31, 4.75) {};
 	 	\node [style=none] (79) at (-31, 2.75) {};
 	 	\node [style=none] (80) at (-23, 3) {};
 	 	\node [style=none] (81) at (-23, 1) {};
 	 	\node [style=none] (82) at (-7.25, 3.5) {};
 	 	\node [style=none] (83) at (-3.75, 3.5) {};
 	 	\node [style=none] (84) at (-8, 4.25) {};
 	 	\node [style=none] (85) at (-3, 4.25) {};
 	 	\node [style=none] (86) at (-5.5, 3) {};
 	 	\node [style=none] (87) at (-1, 3.75) {};
 	 	\node [style=none] (88) at (-5.5, 1) {};
 	 	\node [style=none] (89) at (-11, 4.75) {};
 	 	\node [style=none] (90) at (-11, 2.75) {};
 	 	\node [style=none] (91) at (-5.5, 6.25) {};
 	 	\node [style=none] (92) at (0, 3.75) {};
 	 	\node [style=none] (93) at (3, 3.75) {};
 	 	\node [style=none] (94) at (1.5, 4.8) {\footnotesize$u''$};
 	 	\node [style=none] (95) at (-13.5, 4.75) {};
 	 	\node [style=none] (96) at (-13.5, 2.75) {};
 	 	\node [style=none] (97) at (10.25, 3.5) {};
 	 	\node [style=none] (98) at (13.75, 3.5) {};
 	 	\node [style=none] (99) at (9.5, 4.25) {};
 	 	\node [style=none] (100) at (14.5, 4.25) {};
 	 	\node [style=none] (101) at (12, 3) {};
 	 	\node [style=none] (102) at (16.5, 3.75) {};
 	 	\node [style=none] (103) at (12, 1) {};
 	 	\node [style=none] (104) at (6.5, 4.75) {};
 	 	\node [style=none] (105) at (6.5, 2.75) {};
 	 	\node [style=none] (106) at (12, 6.25) {};
 	 	\node [style=none] (107) at (17.5, 3.75) {};
 	 	\node [style=none] (108) at (20.5, 3.75) {};
 	 	\node [style=none] (109) at (19, 4.9) {\footnotesize$\barS$};
 	 	\node [style=none] (110) at (4, 4.75) {};
 	 	\node [style=none] (111) at (4, 2.75) {};
 	 	\node [style=none] (112) at (27.75, 3.5) {};
 	 	\node [style=none] (113) at (31.25, 3.5) {};
 	 	\node [style=none] (114) at (27, 4.25) {};
 	 	\node [style=none] (115) at (32, 4.25) {};
 	 	\node [style=none] (116) at (29.5, 3) {};
 	 	\node [style=none] (117) at (34, 3.75) {};
 	 	\node [style=none] (118) at (29.5, 1) {};
 	 	\node [style=none] (119) at (24, 4.75) {};
 	 	\node [style=none] (120) at (24, 2.75) {};
 	 	\node [style=none] (121) at (29.5, 6.25) {};
 	 	\node [style=none] (125) at (21.5, 4.75) {};
 	 	\node [style=none] (126) at (21.5, 2.75) {};
 	 	\node [style=none] (127) at (-11, 4.75) {};
 	 	\node [style=none] (128) at (-11, 2.75) {};
 	 	\node [style=none] (129) at (-5.5, 3) {};
 	 	\node [style=none] (130) at (-5.5, 1) {};
 	 	\node [style=none] (131) at (-5.5, 3) {};
 	 	\node [style=none] (132) at (-5.5, 1) {};
 	 	\node [style=none] (133) at (6.5, 4.75) {};
 	 	\node [style=none] (134) at (6.5, 2.75) {};
 	 	\node [style=none] (135) at (6.5, 4.75) {};
 	 	\node [style=none] (136) at (6.5, 2.75) {};
 	 	\node [style=none] (137) at (24, 4.75) {};
 	 	\node [style=none] (138) at (24, 2.75) {};
 	 	\node [style=none] (139) at (24, 4.75) {};
 	 	\node [style=none] (140) at (24, 2.75) {};
 	 	\node [style=none] (149) at (29.5, 1.75) {};
 	 	\node [style=none] (150) at (29.5, 5.5) {};
 	 	\draw [bend left=45] (37.center) to (38.center);
 	 	\draw [bend right=60] (39.center) to (40.center);
 	 	\draw [bend right=90,dotted] (46.center) to (45.center);
 	 	\draw [in=180, out=180] (46.center) to (45.center);
 	 	\draw [in=0, out=-90] (43.center) to (44.center);
 	 	\draw [in=0, out=180] (44.center) to (46.center);
 	 	\draw [in=0, out=90] (43.center) to (47.center);
 	 	\draw [in=0, out=-180] (47.center) to (45.center);
 	 	\draw [style=rightarrow,<-] (75.center) to (76.center);
 	 	\draw (45.center) to (78.center);
 	 	\draw (46.center) to (79.center);
 	 	\draw [red, in=0, out=0,dotted] (80.center) to (81.center);
 	 	\draw [red, bend left=270] (80.center) to (81.center);
 	 	\draw [bend left=45] (82.center) to (83.center);
 	 	\draw [bend right=60] (84.center) to (85.center);
 	 	\draw [in=0, out=-90] (87.center) to (88.center);
 	 	\draw [in=0, out=180] (88.center) to (90.center);
 	 	\draw [in=0, out=90] (87.center) to (91.center);
 	 	\draw [in=0, out=-180] (91.center) to (89.center);
 	 	\draw [style=rightarrow,->] (92.center) to (93.center);
 	 	\draw (89.center) to (95.center);
 	 	\draw (90.center) to (96.center);
 	 	\draw [bend left=45] (97.center) to (98.center);
 	 	\draw [bend right=60] (99.center) to (100.center);
 	 	\draw [bend right=90,dotted] (103.center) to (101.center);
 	 	\draw [bend left=90] (103.center) to (101.center);
 	 	\draw [in=0, out=-90] (102.center) to (103.center);
 	 	\draw [in=0, out=180] (103.center) to (105.center);
 	 	\draw [in=0, out=90] (102.center) to (106.center);
 	 	\draw [in=0, out=-180] (106.center) to (104.center);
 	 	\draw [style=rightarrow,->] (107.center) to (108.center);
 	 	\draw (104.center) to (110.center);
 	 	\draw (105.center) to (111.center);
 	 	\draw [bend left=45] (112.center) to (113.center);
 	 	\draw [bend right=60] (114.center) to (115.center);
 	 	\draw [in=0, out=-90] (117.center) to (118.center);
 	 	\draw [in=0, out=180] (118.center) to (120.center);
 	 	\draw [in=0, out=90] (117.center) to (121.center);
 	 	\draw [in=0, out=-180] (121.center) to (119.center);
 	 	\draw (119.center) to (125.center);
 	 	\draw (120.center) to (126.center);
 	 	\draw [red, in=0, out=0,dotted] (127.center) to (128.center);
 	 	\draw [red, bend left=270] (127.center) to (128.center);
 	 	\draw [red, in=0, out=0,dotted] (131.center) to (132.center);
 	 	\draw [red, bend left=270] (131.center) to (132.center);
 	 	\draw [red, in=0, out=0,dotted] (135.center) to (136.center);
 	 	\draw [red, bend left=270] (135.center) to (136.center);
 	 	\draw [red, in=0, out=0,dotted] (139.center) to (140.center);
 	 	\draw [red, bend left=270] (139.center) to (140.center);
 	 	\draw [in=0, out=0, looseness=3.00] (150.center) to (149.center);
 	 	\draw [in=180, out=-180, looseness=3.00] (150.center) to (149.center);
 	 	\end{tikzpicture}
 	 		\caption{Example for the construction of the lifts needed in \eqref{eqnliftingproblemn1} in the case of the $\barS$-move.
 	 			The 
 	 		$\barS$-move can be applied to a subsurface of the shape of a torus with one boundary component and one cut. We will assume that the surface actually continues at this boundary component, so it will play the role of a cut (if the surface ends there, the situation would be easier as we will explain after covering the present situation). Suppose now we want to lift $\barS$ to $\catf{L}(\Sigma)$ with the start value given by the first picture (colored cuts are drawn in red; orientation of cuts and marked points are suppressed as in Figure~\ref{figbarfbars}).
 	 		If we start with this colored cut system, then $\barS$ will not induce an admissible move because the cut that is replaced with the transversal cut by the $\barS$-move is colored. 
 	 		But by a {zigzag} of uncolorings (denoted by $u'$ and $u''$)
 	 		we arrive at the third colored cut system from the left. Now $\barS$ will induce an admissible move. This way, we obtain the desired lift of $\barS$.
 	 		As just mentioned, if the hole of the torus belongs to a boundary component, the situation simplifies: We forget all colors (which is allowed thanks to the presence of a boundary component) and lift $\barS$ directly. 	 	}
 	 	\label{figliftS}
 	 \end{figure}

 	 \item In order to solve the lifting problem~\eqref{eqnliftingproblemKan} for $n\ge 2$, we first observe that 
 	 the horn $\xi : \Lambda_k^n \to \catf{L}(\Sigma)$ admits a filler $\widetilde \sigma : \Delta^n \to \catf{L}(\Sigma)$, i.e.\ $\widetilde \sigma \iota = \xi$, because $\catf{L}(\Sigma)$ is a Kan complex by \ref{Kanproof1}. 
 	 Moreover,
 	 \begin{align} \omega _\Sigma \widetilde \sigma \iota = \omega _\Sigma \xi = \sigma \iota \ . 
 	 \end{align}
 	 In other words,
 	 both $\omega _\Sigma \widetilde \sigma$ and $\sigma$ fill the horn $\omega _\Sigma \xi : \Lambda_k^n \to N\C(\Sigma)$ in the nerve of the  1-groupoid $\C(\Sigma)$. Hence, they are equal, and we can conclude that the lifting problem~\eqref{eqnliftingproblemKan} can be solved for $n\ge 2$.

 	 \end{itemize} 
  This finishes the proof that $\omega _\Sigma$ is 
  a Kan fibration. \label{Kanproof2}

 \item 	 
 The fiber of $\omega _\Sigma : \catf{L}(\Sigma) \to N\C(\Sigma)$ over $C \in \C(\Sigma)$ can be identified with the $\infty$-localization of the nerve $NQ_\Sigma^{-1}(C)$ of the fiber $Q_\Sigma^{-1}(C)$ of $Q_\Sigma : \Chat(\Sigma) \to \C(\Sigma)$ over $C$ at all uncolorings in that fiber
 (this follows from the fact that the localization that led from $\Chat(\Sigma)$ to $\catf{L}(\Sigma)$ just happens in the fibers of $Q_\Sigma$).
 Therefore, we conclude  $|\omega _\Sigma^{-1}(C)|\simeq 	|NQ_\Sigma^{-1}(\Sigma)|$.
 But $Q_\Sigma^{-1}(C)$ has an initial object, namely the one obtained by coloring \emph{all} cuts of $C$. As a result, $Q_\Sigma^{-1}(C)$ is a contractible category. This implies $|\omega _\Sigma^{-1}(C)|\simeq \star$. Combining this with \ref{Kanproof2} and the long exact sequence for homotopy groups, we conclude that
 \begin{align}
 |\omega _\Sigma| : |\catf{L}(\Sigma)| \xrightarrow{\ \simeq \ }    |N\C(\Sigma)|
 \end{align}
 is an equivalence.\label{Kanproof3}

 	\end{pnum}
 From \ref{Kanproof1} and \ref{Kanproof3} we obtain
 \begin{align}
 |N\Chat(\Sigma)|\simeq  |N\C(\Sigma)| \ . 
 \end{align}
 Now the Theorem follows from Theorem~\ref{thmbakifmcut} that asserted that $\C(\Sigma)$ is contractible. 
 \end{proof}

 In this subsection we have, so far, replaced the groupoid of cut systems of an extended surface with a category of colored cut systems and proven that the latter category is still contractible.
 It remains to replace the groupoid of  markings with a colored analogue. 
 To this end, we will use the presentation \eqref{markingseqnviagrothendieck} of $\M(\Sigma)$ as Grothendieck construction:
 
 \begin{definition}\label{defcoloredfinemarking}
 	For an extended surface $\Sigma$,
 	we define the 
 	\emph{category $\FMhat(\Sigma)$ of colored markings on $\Sigma$}
 	as the Grothendieck construction 
 	\begin{align}
 \FMhat(\Sigma):= \int \left(    	\Chat(\Sigma) \xrightarrow{\ Q_\Sigma\ } \C(\Sigma) \xrightarrow{\  \m_\Sigma  \ } \Grpd \right)  \ , 
 	\end{align}
 	where $Q_\Sigma$ and $\m_\Sigma$  appeared in \eqref{eqncanfunctorpi} and \eqref{eqnsmallmfunctor}, respectively. 
 	\end{definition}
 
Proposition~\ref{propnatdec0} carries over to colored markings:
  \begin{proposition}\label{propnatdec}
 	Colored 	markings  on extended surfaces  naturally form a symmetric monoidal functor
 	\begin{align}
 	\Mhat : \Surf \to \Cat \ . 
 	\end{align} 
 \end{proposition}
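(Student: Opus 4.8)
The plan is to reduce the statement to the functoriality of the Grothendieck construction together with the results already established for cut systems and for uncolored markings. The first step is to rewrite Definition~\ref{defcoloredfinemarking} as a strict fibre product. Recall the base-change property of the Grothendieck construction: for functors $G:\cat{A}\to\cat{B}$ and $F:\cat{B}\to\Grpd$ there is a canonical identification $\int(F\circ G)=\cat{A}\times_{\cat{B}}\int F$, the strict fibre product of $G$ with the projection $\int F\to\cat{B}$. Applying this with $G=Q_\Sigma$ from \eqref{eqncanfunctorpi} and $F=\m_\Sigma$ from \eqref{eqnsmallmfunctor}, and using the canonical equivalence $\int\m_\Sigma\simeq\FM(\Sigma)$ from \eqref{markingseqnviagrothendieck}, we obtain
\[
\Mhat(\Sigma)\ \simeq\ \Chat(\Sigma)\ \times_{\C(\Sigma)}\ \FM(\Sigma)\ ,
\]
the fibre product of the coloring-forgetting functor $Q_\Sigma:\Chat(\Sigma)\to\C(\Sigma)$ and the projection functor $\pi_\Sigma:\FM(\Sigma)\to\C(\Sigma)$ sending a marking to its underlying cut system. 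In words: a colored marking is a marking together with a choice of a subset of its cuts declared colored, subject to \eqref{eqncoloredcondition} --- nothing more.

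The second step is to assemble the three functors and the two natural transformations entering this fibre product into data over $\Surf$. By Proposition~\ref{propnatdec0}, $\Chat:\Surf\to\Cat$ is symmetric monoidal; by Proposition~\ref{propmark}, the cut-system functor $\C:\Surf\to\Grpd$ and the marking functor $\FM:\Surf\to\Grpd$ (there denoted $\M$) are symmetric monoidal. The families $Q=(Q_\Sigma)_\Sigma$ and $\pi=(\pi_\Sigma)_\Sigma$ are monoidal natural transformations $\Chat\Rightarrow\C$ and $\FM\Rightarrow\C$: for $Q$ this is exactly the identity $Q_{\Sigma'}\Chat(f)=\C(f)Q_\Sigma$ recorded after Proposition~\ref{propnatdec0}, and for $\pi$ it is the fact --- built into the definition of the action of mapping classes and sewings on markings in \cite{bakifm} --- that a morphism $f:\Sigma\to\Sigma'$ in $\Surf$ carries a marking with underlying cut system $C$ to one with underlying cut system $\C(f)(C)$, compatibly with moves. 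As everywhere in this circle of ideas, the relevant squares commute strictly because moves, mapping classes and sewings are defined purely through incidences of cuts and graphs. Since limits in the functor category $\Cat^{\Surf}$ are computed objectwise and $\pi_\Sigma$ is an opfibration (hence an isofibration), the objectwise fibre product of the cospan $\Chat\xrightarrow{\,Q\,}\C\xleftarrow{\,\pi\,}\FM$ is again a functor $\Surf\to\Cat$ and, by the display above, is canonically $\Mhat$; this simultaneously defines $\Mhat$ on morphisms of $\Surf$ and yields $\Mhat(g\circ f)=\Mhat(g)\Mhat(f)$ and $\Mhat(\id)=\id$.

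For the third step I would transport the symmetric monoidal structure through the fibre product. Because strict pullbacks commute with finite products,
\[
\bigl(\cat{A}_1\times_{\cat{B}_1}\cat{E}_1\bigr)\times\bigl(\cat{A}_2\times_{\cat{B}_2}\cat{E}_2\bigr)\ \cong\ \bigl(\cat{A}_1\times\cat{A}_2\bigr)\times_{\cat{B}_1\times\cat{B}_2}\bigl(\cat{E}_1\times\cat{E}_2\bigr)\ ,
\]
the laxity constraints of $\Chat$, $\C$ and $\FM$, together with the fact that $Q$ and $\pi$ are monoidal, induce comparison functors $\Mhat(\Sigma_1\sqcup\Sigma_2)\to\Mhat(\Sigma_1)\times\Mhat(\Sigma_2)$ and $\star\to\Mhat(\emptyset)$, which are isomorphisms since they are so for $\Chat$, $\C$ and $\FM$ and fibre products of isomorphisms are isomorphisms; their associativity, unit and symmetry diagrams commute because the corresponding ones do for $\Chat$, $\C$ and $\FM$. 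This promotes $\Mhat$ to a symmetric monoidal functor.

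The only point requiring genuine care --- and hence the main obstacle --- is the verification in the second step that $\pi:\FM\Rightarrow\C$ is strictly natural and monoidal, equivalently that the pseudofunctors $\m_\Sigma$ vary pseudonaturally in $\Sigma$ over $\C$; this is a bookkeeping exercise made routine by the incidence-only description of all the combinatorial data involved, exactly as in the already-established Propositions~\ref{propmark} and~\ref{propnatdec0}. If one wishes to avoid strictness questions altogether, one can instead run the whole argument with pseudofunctors and the bicategorical Grothendieck construction, at the cost of carrying coherence cells throughout.
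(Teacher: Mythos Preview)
Your proof is correct, but it takes a substantially more elaborate route than the paper does. The paper gives no proof of this proposition at all: it merely records, in the sentence preceding the statement, that ``Proposition~\ref{propnatdec0} carries over to colored markings.'' The intended argument is therefore the direct one sketched before Proposition~\ref{propnatdec0} for colored cut systems --- define $\Mhat(f)$ explicitly on objects and generating morphisms (mapping classes act on the underlying marking and on the coloring in the evident way; sewings turn the glued boundary circles into \emph{colored} cuts and leave everything else alone), and observe that the relations in $\Surf$ are respected because all the data involved are incidence data.

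Your approach is genuinely different: you identify $\Mhat(\Sigma)$ with the strict fibre product $\Chat(\Sigma)\times_{\C(\Sigma)}\FM(\Sigma)$ via base change for Grothendieck constructions and \eqref{markingseqnviagrothendieck}, and then deduce functoriality and the symmetric monoidal structure from the fact that fibre products in $\Cat^{\Surf}$ are computed objectwise, together with the already-established functoriality of $\Chat$, $\C$, $\FM$ and the naturality of $Q$ and $\pi$. This is clean and has the virtue of reducing everything to prior results; the price is that you must manage the equivalence (not isomorphism) in \eqref{markingseqnviagrothendieck} and the pseudofunctoriality of $\m_\Sigma$, which you correctly flag in your final paragraph. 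The paper's direct approach avoids these coherence bookkeeping issues entirely at the cost of leaving the verification to the reader; your approach makes the structural reason for functoriality transparent.
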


 Fortunately, the contractibility of $\FMhat(\Sigma)$ will follow directly from  Theorem~\ref{thmcolorlego}
 and \emph{Thomason's Theorem} \cite[Theorem~1.2]{thomason} that states that for a functor $F:\cat{A}\to\Cat$, the natural map
 \begin{align}
 \hocolimsub{a\in \cat{A}} NF(a) \ra{\simeq} N \int F \label{eqnthomasonthm}
 \end{align} is an equivalence.
 
 \begin{theorem}\label{thmmhatcontractible}
 	For an extended surface $\Sigma$,
 the	category $\FMhat(\Sigma)$ of colored markings on $\Sigma$
 	is contractible.
 	\end{theorem}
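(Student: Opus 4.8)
The plan is to combine the contractibility of colored cut systems (Theorem~\ref{thmcolorlego}) with Thomason's Theorem~\eqref{eqnthomasonthm} and the homotopy invariance of homotopy colimits. Recall from Definition~\ref{defcoloredfinemarking} that $\FMhat(\Sigma)=\int F$ is the Grothendieck construction of the composite $F:=\m_\Sigma\circ Q_\Sigma$, i.e.\ of
\[
\Chat(\Sigma)\xrightarrow{\ Q_\Sigma\ }\C(\Sigma)\xrightarrow{\ \m_\Sigma\ }\Grpd\hookrightarrow\Cat\ .
\]
Strictly speaking $\m_\Sigma$ is only a pseudo-functor, so I would first replace it by an equivalent strict functor via the standard strictification of pseudo-functors; this does not change $\FMhat(\Sigma)$ up to equivalence, and from now on I suppress this point. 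Thomason's Theorem, applied to $F$, then yields an equivalence
\[
\hocolimsub{U\in\Chat(\Sigma)} N\bigl(\m_\Sigma(Q_\Sigma(U))\bigr)\ \xrightarrow{\ \simeq\ }\ N\FMhat(\Sigma)\ .
\]

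The second step is to identify the diagram $U\mapsto N(\m_\Sigma(Q_\Sigma(U)))$ on $\Chat(\Sigma)$ with the constant diagram at the point, up to objectwise weak equivalence. Indeed, for every object $U$ of $\Chat(\Sigma)$ with underlying cut system $C=Q_\Sigma(U)$, the equivalence $\m_\Sigma(C)\simeq\star$ from \eqref{eqnmsigmacontrac} shows that $N(\m_\Sigma(Q_\Sigma(U)))$ is contractible, and the collapse maps onto $\star$ assemble into a natural transformation $F\Rightarrow\ast$ to the constant diagram which is a levelwise equivalence. Since homotopy colimits are invariant under objectwise weak equivalences, this gives
\[
\hocolimsub{U\in\Chat(\Sigma)} N\bigl(\m_\Sigma(Q_\Sigma(U))\bigr)\ \simeq\ \hocolimsub{U\in\Chat(\Sigma)}\star\ =\ |N\Chat(\Sigma)|\ .
\]

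The final step is to invoke Theorem~\ref{thmcolorlego}: the category $\Chat(\Sigma)$ of colored cut systems is contractible, so $|N\Chat(\Sigma)|\simeq\star$. Chaining the three equivalences produces $|N\FMhat(\Sigma)|\simeq\star$, which is precisely the claim that $\FMhat(\Sigma)$ is contractible.

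I do not expect a genuine obstacle here: the substantive work has already been carried out, namely the contractibility result Theorem~\ref{thmcolorlego} (and, through \eqref{eqnmsigmacontrac}, the Bakalov–Kirillov contractibility statements of Theorems~\ref{thmbakifm} and~\ref{thmbakifmcut}). The only points that require a modicum of care are (i) passing from the pseudo-functor $\m_\Sigma$ to a strict functor so that Thomason's Theorem applies on the nose, and (ii) the homotopy-invariance statement for homotopy colimits used to replace the fiberwise contractible diagram by the constant one; both are entirely standard. One could equivalently phrase the argument by noting that the Grothendieck fibration $\FMhat(\Sigma)\to\Chat(\Sigma)$ has contractible fibers and is therefore a weak equivalence on nerves, but the Thomason route above is the most direct given what is already set up in the excerpt.
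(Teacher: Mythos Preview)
Your proof is correct and follows essentially the same route as the paper: Thomason's Theorem reduces $|N\FMhat(\Sigma)|$ to a homotopy colimit over $\Chat(\Sigma)$ of the contractible groupoids $\m_\Sigma Q_\Sigma(U)$, which collapses to $|N\Chat(\Sigma)|$ and hence is contractible by Theorem~\ref{thmcolorlego}. The only difference is that you make explicit the strictification of the pseudo-functor $\m_\Sigma$ and the homotopy invariance of homotopy colimits, points the paper leaves implicit.
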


 \begin{proof}
 	By Thomason's Theorem \eqref{eqnthomasonthm}
 	we obtain
 	\begin{align}
 	N\FMhat(\Sigma) = N \int \left(   	\Chat(\Sigma) \xrightarrow{\ Q_\Sigma\ } \C(\Sigma) \xrightarrow{\  \m_\Sigma  \ } \Grpd  \right) \simeq \hocolimsub{U\in\Chat(\Sigma)} N\m_\Sigma Q_\Sigma(U) \ . \label{eqnmhatcontractible}
 	\end{align}
 	It follows from \eqref{eqnmsigmacontrac} that $N\m_\Sigma Q_\Sigma(U)$ is equivalent to a point. As a consequence, the right hand side of \eqref{eqnmhatcontractible} is equivalent to the homotopy colimit of the constant diagram with value $\star$ over $\Chat(\Sigma)$, but the latter is given by $N \Chat(\Sigma)$ which have already proven to be contractible in Theorem~\ref{thmcolorlego}.
 	\end{proof}

 \subparagraph{The $\infty$-groupoid of colored markings.}
 The contractibility result from Theorem~\ref{thmmhatcontractible} is a substantial part of the effort needed for the construction of the modular functor in the next section.
 Phrased differently, it tells us that the $\infty$-groupoid $\K(\Sigma)$ obtained by localizing the category $\Mhat(\Sigma)$ of colored markings on an extended surface at all uncolorings is contractible.

\spaceplease 
\section{Construction of the modular functor\label{secconstruc}}
Having defined a  category 
 of colored markings on an extended surface,
we may now finally construct  the modular functor with values in chain complexes.
To this end, 
recall that we have defined in Section~\ref{derivedconformalblockssec} marked blocks that do not only depend on the surface and the boundary label, but also on the auxiliary datum of a \emph{marking}. In the vector space valued situation, the standard procedure is to extend the definition to morphisms of markings, i.e.\ to construct  a functor out of $\M(\Sigma)$ for each surface $\Sigma$. This amounts to relating the structure that is present on a modular category with the moves between different  markings.
Unfortunately, for our differential graded marked blocks, 
this does not seem to be possible directly. The key problem is that Lyubashenko's $\S$-transformation that is used for the definition of vector space valued marked blocks on the $\S$-move does not seem to generalize directly to differential graded marked blocks. Similar, albeit  less severe problems exist for the $\F$-move.

Our categories $\Mhat(\Sigma)$ of \emph{colored} markings precisely solve this problem: 
We show that the marked blocks  extend to functors $\Mhat(\Sigma)\to\Ch$ on categories of colored markings (Theorem~\ref{thmfunctormhat}) such that all uncolorings are sent to equivalences, i.e.\ the functors descend to the $\infty$-localization $\K(\Sigma)$ of $\Mhat(\Sigma)$ at all uncolorings.
 The idea for the definition of the functor $\Mhat(\Sigma)\to\Ch$ is to work with a version of marked blocks which are glued via homotopy coends at colored cuts and via ordinary coends at uncolored ones. 
We prove in Proposition~\ref{propcolorconfblocks} that this is equivalent to the marked blocks we had originally defined
in  Section~\ref{derivedconformalblockssec}. 
Having established this `mixed' definition of a marked block for any colored marking, we will, roughly, apply the moves $\F$ and $\S$ only to those parts of the marked block which coincide with the classical marked block with values in vector spaces using the `classical definitions' there.
All the rest of the information is contained in the uncoloring maps. 
The details of this construction will be discussed in the proof of Theorem~\ref{thmfunctormhat}.

 The functors $\Mhat(\Sigma)\to\Ch$ descend to the category obtained by gluing all categories of colored markings for different surfaces together via the Grothendieck construction (Proposition~\ref{propnatgdc}). The final remaining step in the construction of the modular functor will then be a homotopy left Kan extension (Section~\ref{secleftkan}).

For presentation purposes, we will first treat anomaly-free modular categories and then comment on the anomalous case in Remark~\ref{anomalouscase}.	 This makes sense because the projectivity of the mapping class group actions in the anomalous case will be of the same type for the linear and differential graded setting. 	
		
		\spaceplease
\subsection{Extension of the definition  marked blocks to colored markings}
For a given pivotal finite tensor category $\cat{C}$,
let $\underline{X}$ be a family of projective boundary labels for an extended surface $\Sigma$;
of course,
we also allow the case that $\Sigma$ is closed.
If $\Gamma$ is a  marking on $\Sigma$, then we have defined in Section~\ref{derivedconformalblockssec} a marked block $\block_\cat{C}^{\Sigma,\Gamma}(\underline{X})$ depending on $\Gamma$. This chain complex was defined via an iterated homotopy coend over $\Proj \cat{C}$ with one homotopy coend for each cut in $\Gamma$. 

Let now $\Lambda$ be a \emph{colored} marking on $\Sigma$. 
By Definition~\ref{defcoloredfinemarking} a colored marking $\Lambda$ is a pair $(U,\Gamma)$
 of a colored cut system $U$ and a marking $\Gamma$ which both have the same underlying cut system. In other words, $\Lambda$ arises from $\Gamma$ by declaring some cuts to be colored cuts in a way prescribed by $U$.
Recall that on each connected component of $\Sigma$, the number of colored cuts plus the number  of boundary components has to be at least one; this is required by \eqref{eqncoloredcondition}.
 
We now define $\mfcm(\Sigma,\underline{X},\Lambda)$ to be the chain complex that we obtain from $\block_\cat{C}^{\Sigma,\Gamma}(\underline{X})$ by replacing all homotopy coends corresponding to uncolored cuts by ordinary coends while the homotopy coends corresponding to the colored cuts remain unaffected. 

In formulae, this is expressed as follows: Let $\Sigma_\Lambda$ be the extended surface obtained by cutting $\Sigma$ along all colored cuts of $\Lambda$. Then the marking $\Gamma$ underlying $\Lambda$ gives rise to a marking $\Gamma_\Lambda$ on $\Sigma_\Lambda$ (we are recalling here notation already established on page~\pageref{notationadmissiblemoves}).
If $\Lambda$ has $q$ colored cuts, we arrive at the isomorphism
\begin{align}
\mfcm(\Sigma,\underline{X},\Lambda) \cong \lint^{P_1,\dots,P_q \in \Proj \cat{C}} \ordblock_\cat{C}^{\Sigma_\Lambda,\Gamma_\Lambda}    (\underline{X},P_1,P_1,\dots,P_q,P_q) \ . \label{eqndefinitionalequality} 
\end{align}
This equality just expresses in formulae
the definition of $\mfcm(\Sigma,\underline{X},\Lambda)$ that was just given in words. The only non-trivial fact used here is  that replacing homotopy coends by ordinary coends leads to vector space valued marked blocks $\ordblock_\cat{C}$ (a fact that was explained in Section~\ref{secaugmentationfibration}). 
In \eqref{eqndefinitionalequality} we see the  `mixed' definition of marked blocks mentioned in the introduction of this section made precise: Homotopy coends are used for gluing at colored cuts, ordinary coends at uncolored cuts. This allows us to express  parts of this chain complex by means of the vector spaces $\ordblock_\cat{C}$ thanks to the results of Section~\ref{secaugmentationfibration}.

\begin{proposition}\label{propcolorconfblocks}
	Let $\cat{C}$ be a pivotal finite tensor category and $\underline{X}$ a projective boundary label for an extended surface $\Sigma$ with colored marking $\Lambda$ with underlying marking $\Gamma$.
		Then the canonical map from homotopy coends to ordinary coends, applied to all uncolored cuts of $\Gamma$, induces a trivial fibration
	\begin{align}
	 \varepsilon_\Lambda : \block_\cat{C}^{\Sigma,\Gamma}(\underline{X})   \xrightarrow{\ \simeq \ }    \mfcm(\Sigma,\underline{X},\Lambda) \ . \label{eqncanmapcolmark}
	\end{align}   
	\end{proposition}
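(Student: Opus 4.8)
The plan is to split the iterated homotopy coend defining $\block_\cat{C}^{\Sigma,\Gamma}(\underline{X})$ into an \emph{outer} homotopy coend over the colored cuts and an \emph{inner} homotopy coend over the uncolored ones, to recognise the inner part --- together with the tensor product over the genus-zero pieces --- as the marked block of the surface $\Sigma_\Lambda$ obtained by cutting $\Sigma$ only at its colored cuts, and then to run the augmentation fibration of Proposition~\ref{proptrivfibbdy} for $\Sigma_\Lambda$ inside the outer homotopy coend.

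First I would reduce to the case that $\Sigma$ is connected; this is harmless since $\block_\cat{C}$, $\mfcm$ and the comparison map are all defined componentwise and are compatible with the tensor product encoding disjoint unions. The key topological point is then that every connected component of $\Sigma_\Lambda$ has non-empty boundary: if $\Lambda$ has no colored cut, then $n\ge 1$ by the colored-cut condition \eqref{eqncoloredcondition} and $\Sigma_\Lambda=\Sigma$ already has boundary; if $\Lambda$ has at least one colored cut, then a component $Z$ of $\Sigma_\Lambda$ with empty boundary would be open and closed in $\Sigma$ and disjoint from the colored cutting curves, forcing $Z=\Sigma$ and hence contradicting the presence of a colored cut, so every component is adjacent to some colored cut and acquires a boundary circle upon cutting. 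This is exactly the hypothesis needed to apply Proposition~\ref{proptrivfibbdy} to $\Sigma_\Lambda$.

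Next I would invoke the Fubini theorem for iterated homotopy coends \cite[Proposition~2.7]{dva} to pull the homotopy coends over the colored cuts $c_1,\dots,c_q$ to the outside. Cutting $\Sigma$ at all cuts of $\Gamma$ agrees with first cutting at $c_1,\dots,c_q$ --- producing $(\Sigma_\Lambda,\Gamma_\Lambda)$ --- and then cutting $\Sigma_\Lambda$ at its remaining (uncolored) cuts; this is a statement purely about incidences of cuts and graphs, cf.\ page~\pageref{pageincidences}. Hence the inner homotopy coend over the uncolored cuts together with the (unordered) tensor product over the genus-zero pieces is, by the very definition \eqref{eqnderivedconfblock}, the complex $\block_\cat{C}^{\Sigma_\Lambda,\Gamma_\Lambda}(\underline{X},P_1,P_1,\dots,P_q,P_q)$, and Fubini gives a canonical isomorphism
\begin{align*}
\block_\cat{C}^{\Sigma,\Gamma}(\underline{X})\;\cong\;\lint^{P_1,\dots,P_q\in\Proj\cat{C}}\block_\cat{C}^{\Sigma_\Lambda,\Gamma_\Lambda}(\underline{X},P_1,P_1,\dots,P_q,P_q)\ .
\end{align*}
By Proposition~\ref{proptrivfibbdy} the augmentation fibration $\block_\cat{C}^{\Sigma_\Lambda,\Gamma_\Lambda}(\underline{X},P_1,P_1,\dots,P_q,P_q)\xrightarrow{\ \simeq\ }\ordblock_\cat{C}^{\Sigma_\Lambda,\Gamma_\Lambda}(\underline{X},P_1,P_1,\dots,P_q,P_q)$ is a trivial fibration for each tuple $(P_1,\dots,P_q)$, and it is natural in these objects (being induced by the projection of a bar construction onto its zeroth homology). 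It therefore induces a levelwise trivial fibration between the semisimplicial objects whose realizations compute the two outer homotopy coends $\lint^{P_1,\dots,P_q}$. The homotopy coend preserves quasi-isomorphisms, being a homotopy colimit, and it preserves degree-wise surjections because over the field $k$ each level of the bar construction is built from its argument by tensoring with morphism spaces and forming direct sums --- exact operations. Using the definitional equality \eqref{eqndefinitionalequality}, which identifies the target with $\mfcm(\Sigma,\underline{X},\Lambda)$, this yields a trivial fibration; a short diagram chase then shows it is precisely the canonical comparison $\varepsilon_\Lambda$, since the augmentation for $\Sigma_\Lambda$ is exactly the homotopy-coend-to-coend map at the uncolored cuts of $\Sigma_\Lambda$, which are the uncolored cuts of $\Sigma$, and it is left untouched by the outer coend over the colored ones.

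The hard part will be the Fubini bookkeeping in the third step: one must carefully verify that ``cut all cuts'' factors as ``cut the colored cuts, then cut the rest'' at the level of incidence data, and that rearranging the iterated homotopy coend is compatible with the unordered conventions for homotopy coends and tensor products discussed after \eqref{eqnderivedconfblock}. The remaining ingredients --- non-emptiness of $\partial\Sigma_\Lambda$, applicability of Proposition~\ref{proptrivfibbdy}, and exactness of the homotopy coend on levelwise trivial fibrations --- are routine once this is in place.
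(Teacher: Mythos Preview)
Your proposal is correct and follows essentially the same approach as the paper: identify $\block_\cat{C}^{\Sigma,\Gamma}(\underline{X})$ via Fubini as an outer homotopy coend over colored cuts applied to $\block_\cat{C}^{\Sigma_\Lambda,\Gamma_\Lambda}$, then apply the augmentation fibration of Proposition~\ref{proptrivfibbdy} inside that outer coend, using the colored-cut condition~\eqref{eqncoloredcondition} to guarantee that every component of $\Sigma_\Lambda$ has boundary. The paper's proof is a terse three-line version of exactly this; you have simply spelled out the Fubini rearrangement, the topological argument for $\partial\Sigma_\Lambda\neq\emptyset$, and the preservation of trivial fibrations under the outer homotopy coend, all of which the paper leaves implicit.
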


\begin{proof}
	Using \eqref{eqndefinitionalequality} and the symbols introduced there, the map in question is the map
	\begin{align}
 \lint^{P_1,\dots,P_q \in \Proj \cat{C}} \block_\cat{C}^{\Sigma_\Lambda,\Gamma_\Lambda}    (\underline{X},P_1,P_1,\dots,P_q,P_q)    \to  \lint^{P_1,\dots,P_q \in \Proj \cat{C}} \ordblock_\cat{C}^{\Sigma_\Lambda,\Gamma_\Lambda}    (\underline{X},P_1,P_1,\dots,P_q,P_q) 
	\end{align}\normalsize
	induced by the augmentation fibration 
	$ \block_\cat{C}^{\Sigma_\Lambda,\Gamma_\Lambda}    (\underline{X},P_1,P_1,\dots,P_q,P_q) \to \ordblock_\cat{C}^{\Sigma_\Lambda,\Gamma_\Lambda}    (\underline{X},P_1,P_1,\dots,P_q,P_q)
$
	from Proposition~\ref{proptrivfibbdy} which is a trivial fibration because the definition of colored markings ensures that $\Sigma_\Lambda$ has at least one boundary component in every connected component. This proves the assertion.
	\end{proof}
	
	The proof showed that the map \eqref{eqncanmapcolmark} is actually induced by the augmentation fibration from Section~\ref{secaugmentationfibration}, but only applied to a selected subfamily of the cuts prescribed by the coloring. Therefore, we will refer to the trivial fibration \eqref{eqncanmapcolmark} as a \emph{partial augmentation fibration}.

\begin{corollary}[Uncoloring maps]\label{coruncoloringmaps}
	Let $\cat{C}$ be a pivotal finite tensor category and $\underline{X}$ a projective boundary label for an extended surface $\Sigma$ with colored marking $\Lambda$. 
	Any uncoloring $\Lambda \to \Omega$ induces a trivial fibration
	\begin{align}
	\mfcm(\Sigma,\underline{X},\Lambda)   \xrightarrow{\ \simeq \ }    \mfcm(\Sigma,\underline{X},\Omega) \ . 
	\end{align} 
	induced by the canonical map from homotopy coends to ordinary coends applied to all cuts that become uncolored through the uncoloring.
	We refer to this map as uncoloring map.  
\end{corollary}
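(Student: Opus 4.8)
The plan is to deduce the statement directly from Proposition~\ref{propcolorconfblocks} together with a compatibility check. Given an uncoloring $\Lambda \to \Omega$, the two colored markings $\Lambda=(U,\Gamma)$ and $\Omega=(V,\Gamma)$ share the same underlying ordinary marking $\Gamma$, and $V$ is obtained from $U$ by forgetting the coloring of a subfamily of cuts. Hence Proposition~\ref{propcolorconfblocks} supplies partial augmentation fibrations $\varepsilon_\Lambda : \block_\cat{C}^{\Sigma,\Gamma}(\underline{X}) \xrightarrow{\simeq} \mfcm(\Sigma,\underline{X},\Lambda)$ and $\varepsilon_\Omega : \block_\cat{C}^{\Sigma,\Gamma}(\underline{X}) \xrightarrow{\simeq} \mfcm(\Sigma,\underline{X},\Omega)$, both trivial fibrations from the \emph{same} source $\block_\cat{C}^{\Sigma,\Gamma}(\underline{X})$, which uses homotopy coends at \emph{all} cuts. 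The uncoloring map is then defined as the unique map making the triangle
\begin{equation*}
\begin{tikzcd}
& \block_\cat{C}^{\Sigma,\Gamma}(\underline{X}) \ar[dl,swap,"\varepsilon_\Lambda"] \ar[dr,"\varepsilon_\Omega"] & \\
\mfcm(\Sigma,\underline{X},\Lambda) \ar[rr] & & \mfcm(\Sigma,\underline{X},\Omega)
\end{tikzcd}
\end{equation*}
commute, and by two-out-of-three it is automatically an equivalence (in fact a trivial fibration, since it is the map from $\mfcm(\Sigma,\underline{X},\Lambda)$ induced on homotopy coends over the $q$ colored cuts of $\Lambda$ by applying the augmentation fibration $\block_\cat{C}^{\Sigma_\Lambda,\Gamma_\Lambda}\to\ordblock_\cat{C}^{\Sigma_\Lambda,\Gamma_\Lambda}$ to the further $q-q'$ cuts that get uncolored, and $\Sigma_\Lambda$ has a boundary component in every component so Proposition~\ref{proptrivfibbdy} applies to the cut-open surface as well).

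First I would spell out the explicit formula: using \eqref{eqndefinitionalequality} for both $\Lambda$ (with $q$ colored cuts) and $\Omega$ (with $q'<q$ colored cuts), and noting that $\Sigma_\Omega$ is obtained from $\Sigma_\Lambda$ by re-sewing the $q-q'$ newly-uncolored cuts, one has
\begin{align*}
\mfcm(\Sigma,\underline{X},\Lambda) &\cong \lint^{P_1,\dots,P_q\in\Proj\cat{C}} \ordblock_\cat{C}^{\Sigma_\Lambda,\Gamma_\Lambda}(\underline{X},P_1,P_1,\dots,P_q,P_q),\\
\mfcm(\Sigma,\underline{X},\Omega) &\cong \lint^{P_1,\dots,P_{q'}\in\Proj\cat{C}} \ordblock_\cat{C}^{\Sigma_\Omega,\Gamma_\Omega}(\underline{X},P_1,P_1,\dots,P_{q'},P_{q'}),
\end{align*}
and by the Fubini theorem \cite[Proposition~2.7]{dva} the second homotopy coend can be rewritten as an iterated homotopy coend in which the extra $q-q'$ gluing variables are coended \emph{ordinarily} (that is what the passage from $\Sigma_\Omega$ back to $\Sigma_\Lambda$ records, using the coend description of $\ordblock_\cat{C}$ from Section~\ref{secaugmentationfibration}). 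The uncoloring map is then the homotopy coend over $P_1,\dots,P_{q'}$ of the augmentation fibration applied to the remaining variables; by Proposition~\ref{proptrivfibbdy} (valid since $\Sigma_\Lambda$, being cut open along colored cuts, acquires boundary and hence has non-empty boundary on each component) this augmentation fibration is a trivial fibration, and homotopy coends preserve trivial fibrations (they are a homotopy colimit, hence preserve levelwise equivalences).

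I do not expect a genuine obstacle here; the statement is essentially bookkeeping once Proposition~\ref{propcolorconfblocks} is in place. The one point requiring a little care is checking that the map \emph{induced on homotopy coends by the augmentation fibration} agrees with the map \emph{characterized by the commuting triangle above} — i.e. that the partial augmentation fibration $\varepsilon_\Lambda$ is itself obtained from $\varepsilon_{\block}:=(\text{total augmentation on }\Sigma_\Gamma)$ by the relevant Fubini rearrangement, so that composing the $\Lambda$-partial augmentation with the uncoloring gives the $\Omega$-partial augmentation. This is immediate from the naturality of the canonical map $\lint^{}\Rightarrow\int^{}$ from homotopy coends to ordinary coends and the Fubini theorem, but it should be stated. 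The final sentence of the proof records that, since both $\varepsilon_\Lambda$ and $\varepsilon_\Omega$ are equivalences, the uncoloring map is an equivalence by two-out-of-three; the strengthening to ``trivial fibration'' follows from the explicit description just given.
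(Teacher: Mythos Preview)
Your proposal is correct and follows essentially the same approach as the paper: the commuting triangle with the two partial augmentation fibrations $\varepsilon_\Lambda$ and $\varepsilon_\Omega$ from Proposition~\ref{propcolorconfblocks}, followed by the 2-out-of-3 property. The paper's version is terser --- it deduces that the uncoloring map is an epimorphism directly from the triangle (since $\varepsilon_\Omega$ factors through it) rather than via your explicit Fubini description, and combines this with 2-out-of-3 to conclude trivial fibration --- but the content is the same.
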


\begin{proof}
	The uncoloring maps fit into the commutative triangle
	\begin{equation}
	\begin{tikzcd}
  &  \block_\cat{C}^{\Sigma,\Gamma}(\underline{X}) \ar[swap]{ldd}{\varepsilon_\Lambda}  \ar[]{rdd}{\varepsilon_\Omega} &    \\
	& & \\
		\mfcm(\Sigma,\underline{X},\Lambda)  \ar{rr}{}  & &  \mfcm(\Sigma,\underline{X},\Omega) \   
	\end{tikzcd} 
	\end{equation}
	featuring the partial augmentation fibrations from Proposition~\ref{propcolorconfblocks}.
	Therefore, the uncoloring map needs to be an epimorphism. By the 2-out-3 property it is also an equivalence.
	\end{proof}

Step by step, we will now define the marked blocks \eqref{eqndefinitionalequality} on the \emph{morphisms} of the category of colored markings. 
The strategy is based on a trade-off:  Using the more complicated version of the Lego-Teichmüller game based on \emph{colored cuts} will enable us to import a lot of critical definitions from the vector space valued case \cite{jfcs} that in turn uses Lyubashenko's work \cite{lubacmp,luba,lubalex}
(this relatively technical construction really seems to be necessary because, as mentioned above, the `usual' maps assigned to moves between cut systems will not directly lift to differential graded marked blocks). Once we have done that, i.e.\ after Theorem~\ref{thmfunctormhat} below, we will use the Grothendieck construction and a homotopy left Kan extension to arrive at the desired differential graded modular functor.\myskip

We begin with those morphisms of colored markings that do not change the underlying cut system. This is the comparatively easy part:

\begin{lemma}\label{lemmamovesonfibers}
	Let $\cat{C}$ be a finite ribbon category and $\Sigma$ an extended surface with projective boundary label $\underline{X}$ in $\cat{C}$.
	For a colored cut system $U$ on $\Sigma$, the assignment $\m_\Sigma Q_\Sigma (U) \ni \Gamma \mapsto \mfcm(\Sigma,\underline{X},     (U,\Gamma)         )$ extends to a functor $L_U:\m_\Sigma Q_\Sigma (U) \to \Ch$. 
	\end{lemma}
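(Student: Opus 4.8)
The plan is to construct the functor $L_U : \m_\Sigma Q_\Sigma(U) \to \Ch$ by reducing everything to the vector-space-valued block functors $\ordblock_\cat{C}$, for which the action of moves is already available from the classical Lego-Teichm\"uller game (cf.\ \cite{bakifm,jfcs}). The point of working with a fixed colored cut system $U$ is that $\m_\Sigma Q_\Sigma(U)$ is the groupoid of markings with a \emph{fixed} underlying cut system, namely $C:=Q_\Sigma(U)$; by Theorem~\ref{thmbakifm} and Theorem~\ref{thmbakifmcut} (cf.\ \eqref{eqnmsigmacontrac}) this groupoid is contractible, so its morphisms are generated by the moves $\Z$ and $\B$ that do not alter the cut system, subject to their relations.

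First I would recall the formula \eqref{eqndefinitionalequality}: writing $\Sigma_\Lambda$ for the surface obtained by cutting $\Sigma$ along the colored cuts of $\Lambda=(U,\Gamma)$ and $\Gamma_\Lambda$ for the induced marking on $\Sigma_\Lambda$, one has
\begin{align*}
\mfcm(\Sigma,\underline{X},(U,\Gamma)) \cong \lint^{P_1,\dots,P_q\in\Proj\cat{C}} \ordblock_\cat{C}^{\Sigma_\Lambda,\Gamma_\Lambda}(\underline{X},P_1,P_1,\dots,P_q,P_q)\ .
\end{align*}
Since $U$ is held fixed, the surface $\Sigma_\Lambda$ and hence the index set of the homotopy coend do not vary with $\Gamma\in\m_\Sigma Q_\Sigma(U)$; only the marking $\Gamma_\Lambda$ on $\Sigma_\Lambda$ changes. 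Now a move in $\m_\Sigma Q_\Sigma(U)$ (a $\Z$- or $\B$-move, or a composite thereof) does not touch the colored cuts — these are precisely the cuts at which we already cut — so it descends to a move between markings (with the \emph{same} cut system) on $\Sigma_\Lambda$, i.e.\ to a morphism in $\m_{\Sigma_\Lambda}(C_U)$. The classical block functors with values in vector spaces are functorial on such markings: the $\Z$-move acts through the pivotal/ribbon structure and the $\B$-move through the braiding of $\cat{C}$ (see \cite[Section~2.4]{jfcs} and the underlying constructions of \cite{lubacmp,lubalex}). Applying this functoriality to each factor $\ordblock_\cat{C}^{\Sigma_\Lambda,\Gamma_\Lambda}(\underline{X},P_1,P_1,\dots,P_q,P_q)$, naturally in the $P_i\in\Proj\cat{C}$, yields for every move $\mu:\Gamma\to\Gamma'$ in $\m_\Sigma Q_\Sigma(U)$ a natural transformation of the $\Proj\cat{C}$-multimodules involved, and hence — by functoriality of the homotopy coend $\lint$ in its coefficient diagram \cite[Section~2]{dva} — an induced chain map
\begin{align*}
L_U(\mu) : \mfcm(\Sigma,\underline{X},(U,\Gamma)) \to \mfcm(\Sigma,\underline{X},(U,\Gamma'))\ .
\end{align*}
That $L_U$ respects composition and identities follows because the homotopy coend is a functor and because the vector-space-valued block functors already satisfy all the relations among the $\Z$- and $\B$-moves; these relations involve only the braiding and the (ribbon-induced) pivotal structure of $\cat{C}$, which is all the input needed. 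Here the hypothesis that $\cat{C}$ is a finite \emph{ribbon} category (not merely pivotal) is exactly what makes the $\B$-move available and the relations hold.

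The main obstacle, and the reason this lemma is genuinely the ``easy part'' flagged in the text, is bookkeeping rather than a conceptual difficulty: one must check that a move in $\m_\Sigma Q_\Sigma(U)$ really does induce a well-defined move of markings on the \emph{cut} surface $\Sigma_\Lambda$ compatibly with all choices (orderings of colored cuts, the permutation $\sigma$ implicit in the definition of $\ordblock_\cat{C}$, the unordered homotopy coend), and that the resulting assignment is independent of how a given morphism of $\m_\Sigma Q_\Sigma(U)$ is expressed in terms of generators. The first point is handled by the observation, already emphasized on page~\pageref{pageincidences}, that the block depends only on the incidence combinatorics of cuts and graphs, which moves preserve; the second is handled by Theorem~\ref{thmbakifm}, which guarantees that any two such expressions differ by the known relations, all of which are respected by $\ordblock_\cat{C}$ and hence, after applying $\lint$, by $L_U$. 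I would also record at the end that, by construction, $L_U$ is compatible with the partial augmentation fibration $\varepsilon_{(U,\Gamma)}$ of Proposition~\ref{propcolorconfblocks}, which will be needed when these functors $L_U$ are later assembled over varying $U$.
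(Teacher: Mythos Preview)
Your proposal is correct and follows essentially the same approach as the paper: both reduce via \eqref{eqndefinitionalequality} to the vector-space-valued blocks $\ordblock_\cat{C}$, observe that the fiber $\m_\Sigma Q_\Sigma(U)$ involves only the $\Z$- and $\B$-moves (those fixing the cut system), and then invoke the classical functoriality of $\ordblock_\cat{C}$ under these moves via the pivotal structure and braiding, with the relations supplied by \cite{jfcs}. The paper phrases the reduction by cutting at \emph{all} cuts of $C$ to land on genus-zero pieces directly, whereas you cut only at the colored cuts and work on $\Sigma_\Lambda$, but this is a cosmetic difference; your extra bookkeeping paragraph and the remark on compatibility with $\varepsilon_{(U,\Gamma)}$ are additions rather than deviations.
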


\begin{proof}
		The functor $Q_\Sigma : \Chat(\Sigma)\to\C(\Sigma)$ from \eqref{eqncanfunctorpi} just forgets the coloring and sends $U$ to a cut system $C:=Q_\Sigma(U)$.
	The groupoid $\m_\Sigma Q_\Sigma (U)=\m_\Sigma(C)$ is by its definition in \eqref{eqndefmsigma} the groupoid of markings on the cut system $C$.
 Morphisms are just the morphisms of markings on $C$ that leave the cut system $C$ unaffected. Explicitly, the objects of this groupoid are markings on the genus zero surfaces that we obtain from cutting $\Sigma$ at all cuts of $C$. The morphisms are, separately for each of these genus zero surfaces, generated by the Z-move and the B-move \cite[Section~4.1]{bakifm} subject to their relations given in \cite[Section~4.7]{bakifm}.   
	From  \eqref{eqndefinitionalequality} it follows that we can define  the desired functor $L_U:\m_\Sigma Q_\Sigma (U)=\m_\Sigma(C)\to\Ch$ on these moves just as for vector space valued marked blocks (because under the homotopy coend only vector space valued marked blocks $\ordblock_\cat{C}$ appear, see~\eqref{eqndefinitionalequality}). More precisely, the Z-move is sent to the Z-isomorphism induced by the pivotal structure \cite[Definition~3.5~(i)]{jfcs}, and the B-move is sent to the B-isomorphism induced by the braiding \cite[Definition~3.5~(ii)]{jfcs}. In \cite{jfcs}, these definitions were made for fine markings, but they carry over to markings which are not necessarily fine. The statement that the $\Z$-isomorphism and the $\B$-isomorphism satisfy the needed relations in~\cite[Lemma~3.8]{jfcs}. 
	\end{proof}

In order to define marked blocks on the entire category 
of colored markings on a given extended surface,
we will use modularity:

\begin{theorem}\label{thmfunctormhat}
	Let $\cat{C}$ be an anomaly-free modular category and $\Sigma$ an extended surface with projective boundary label $\underline{X}$ in $\cat{C}$.
	Then the functors $\m_\Sigma Q_\Sigma (U) \to \Ch$ for $U\in \Chat(\Sigma)$ from Lemma~\ref{lemmamovesonfibers}
	induce a functor
	\begin{align}\mfcm(\Sigma,\underline{X},-)
	: \Mhat(\Sigma) \to \Ch    \label{eqnintfunctors}
	\end{align}
	that sends all morphisms to equivalences, i.e.\ it descends to the $\infty$-localization $\K(\Sigma)$ of $\Mhat(\Sigma)$ at all uncolorings.
	\end{theorem}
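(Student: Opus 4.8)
The plan is to assemble the functor $\mfcm(\Sigma,\underline{X},-)$ out of the fiberwise functors $L_U$ of Lemma~\ref{lemmamovesonfibers} by using the presentation of $\Mhat(\Sigma)$ as the Grothendieck construction $\int(\m_\Sigma \circ Q_\Sigma)$ from Definition~\ref{defcoloredfinemarking}. Recall that to give a functor out of $\int F$ for $F:\Chat(\Sigma)\to\Grpd$ it is enough to give, for each object $U\in\Chat(\Sigma)$, a functor $L_U:F(U)\to\Ch$, and for each morphism $\varphi:U\to V$ in $\Chat(\Sigma)$ a natural transformation $\lambda_\varphi: L_U \Rightarrow L_V\circ F(\varphi)$, subject to the evident compatibility with composition in $\Chat(\Sigma)$. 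The functors $L_U$ are already in hand. So the core of the proof is to produce the comparison maps $\lambda_\varphi$ for the two kinds of generating morphisms of $\Chat(\Sigma)$ --- uncolorings (U) and admissible moves (AM) --- and to check the relations (RU), (RM), (C).

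First I would treat the uncolorings. An uncoloring $\varphi:U=(C,S)\to V=(C,S')$ leaves the underlying cut system $C$ fixed, so $F(\varphi)=\id_{\m_\Sigma(C)}$, and for each marking $\Gamma\in\m_\Sigma(C)$ the colored markings $(U,\Gamma)$ and $(V,\Gamma)$ differ only by which cuts are colored. By Corollary~\ref{coruncoloringmaps} the canonical map from homotopy coends to ordinary coends at the newly uncolored cuts is a (trivial) fibration $\mfcm(\Sigma,\underline{X},(U,\Gamma))\ra{\simeq}\mfcm(\Sigma,\underline{X},(V,\Gamma))$; naturality of this map in $\Gamma$ (it is built from the augmentation fibration of Proposition~\ref{proptrivfibbdy} and the functoriality of $\ordblock_\cat{C}$, all of which are natural in the marking) gives the required natural transformation $\lambda_\varphi: L_U\Rightarrow L_V$, and it is even an equivalence. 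Relation (RU) holds because the relevant homotopy-coend-to-coend maps compose strictly (one reads this off from the concrete model), and this is exactly the statement proven for a single marking inside the proof of Corollary~\ref{coruncoloringmaps} via the commuting triangle of partial augmentation fibrations.

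Second I would treat the admissible moves, which is where the real work is and which I expect to be the main obstacle. An admissible move $\varphi:U\to V$ with underlying move $\mu:C\to C'$ does not affect the colored cuts, i.e.\ $\Sigma_U=\Sigma_V$ and $\mu$ is the image under the re-sewing functor $\C(s_U):\C(\Sigma_U)\to\C(\Sigma)$ of a move $\bar\mu:C_U\to C_V$ that lives \emph{on the cut surface $\Sigma_U$}. Using the defining equality \eqref{eqndefinitionalequality}, $\mfcm(\Sigma,\underline{X},(U,\Gamma))$ is an iterated homotopy coend over $\Proj\cat{C}$ (one per colored cut) of the vector-space valued marked blocks $\ordblock_\cat{C}^{\Sigma_\Lambda,\Gamma_\Lambda}$ attached to $\Sigma_U$. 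The key point is that $\bar\mu$ --- being a move on $\Sigma_U$ between \emph{uncolored} cut systems --- acts on the vector-space-valued blocks $\ordblock_\cat{C}$ by the classical $\Z$-, $\B$-, $\F$-, $\S$-isomorphisms of \cite{jfcs,bakifm,lubacmp} (extended from fine to general markings as in the proof of Lemma~\ref{lemmamovesonfibers}), which are precisely defined by means of the pivotal structure, the braiding, duality, and Lyubashenko's $\S$-transformation, and which satisfy the Bakalov--Kirillov relations. Applying the homotopy coend $\lint^{\text{colored cuts}}$ to these isomorphisms --- using the functoriality/Fubini machinery of \cite[Section~2]{dva} to handle that the homotopy coends run over a \emph{subfamily} of the cuts --- produces the natural transformation $\lambda_\varphi$, and it is an equivalence (in fact an isomorphism with the concrete models). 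The subtlety to check carefully is that the move $\mu$ on $\Sigma$ really is induced from a move $\bar\mu$ on $\Sigma_U$ that does not disturb the colored cuts, so that the homotopy-coend variables are merely "carried along"; this is guaranteed by the definition of admissible move (AM). Relation (RM) then follows: the Bakalov--Kirillov relations hold among the classical $\ordblock_\cat{C}$-isomorphisms on $\Sigma_U$ by \cite[Lemma~3.8]{jfcs}, and applying $\lint$ is functorial, so the relations survive.

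Finally I would check relation (C), the compatibility of an uncoloring with an admissible move. Here one has a square in $\Chat(\Sigma)$ and must verify that the induced square of natural transformations commutes; but the uncoloring map is "reduce a homotopy coend to an ordinary coend at a fixed cut", the admissible move acts by a classical block isomorphism on the remaining data, and these two operations are manifestly independent (they touch disjoint parts of the iterated homotopy coend), so the square commutes, again using the Fubini theorem of \cite[Proposition~2.7]{dva} to reorganize the iterated coends. Having produced $L_\bullet$, the $\lambda_\varphi$, and verified (RU), (RM), (C), the universal property of the Grothendieck construction yields the functor $\mfcm(\Sigma,\underline{X},-):\Mhat(\Sigma)=\int(\m_\Sigma\circ Q_\Sigma)\to\Ch$. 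Since the generating uncolorings are sent to equivalences (the $\lambda_\varphi$ above are equivalences) and so are the admissible moves and the isomorphisms in the fibers $\m_\Sigma Q_\Sigma(U)$ (which are groupoids, so \emph{all} their morphisms are invertible and $L_U$ sends them to isomorphisms), every morphism of $\Mhat(\Sigma)$ goes to an equivalence; hence the functor descends to the $\infty$-localization $\K(\Sigma)$ of $\Mhat(\Sigma)$ at the uncolorings, which is what we wanted. The hard part, to reiterate, is the admissible-move step: one must check that the classical $\F$- and $\S$-isomorphisms can be inserted inside the homotopy coend over the colored cuts compatibly, which hinges on the admissible moves living on the cut surface $\Sigma_U$ where everything is vector-space valued.
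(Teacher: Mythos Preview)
Your proposal is correct and follows essentially the same approach as the paper's own proof: you use the lax-colimit description of $\Mhat(\Sigma)=\int(\m_\Sigma\circ Q_\Sigma)$, take the fiberwise functors $L_U$, assign the uncoloring maps of Corollary~\ref{coruncoloringmaps} to uncolorings and the classical $\F$- and $\S$-isomorphisms (applied under the homotopy coend over the colored cuts, via \eqref{eqndefinitionalequality}) to admissible moves, and then verify (RU), (RM), (C). One small point of precision: the admissible moves in $\Chat(\Sigma)$ are moves of \emph{cut systems}, so only $\barF$ and $\barS$ arise here --- the $\Z$- and $\B$-isomorphisms already live in the fibers $\m_\Sigma(C)$ and are taken care of by Lemma~\ref{lemmamovesonfibers}; the paper treats these two generators separately and notes explicitly that modularity enters through the $\S$-transformation needed for $\alpha_{\barS}$.
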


\begin{proof}
The category $\Mhat(\Sigma)$ was defined as a Grothendieck construction (Definition~\ref{defcoloredfinemarking}).
By the definition of the Grothendieck construction, a functor $\Mhat(\Sigma) \to \Ch$ amounts to functors $\m_\Sigma Q_\Sigma (U) \to \Ch$ for $U\in \Chat(\Sigma)$ plus a consistent set of natural transformations (that we will elaborate on in a moment).
As the needed functors $\m_\Sigma Q_\Sigma (U) \to \Ch$, we take the functors $L_U$ from Lemma~\ref{lemmamovesonfibers}.
Additionally, for any morphism $f: U \to V$ in $\Chat(\Sigma)$, we need a natural transformation
$\alpha_f$ filling the triangle
		\begin{equation}
	\begin{tikzcd}
	\m_\Sigma Q_\Sigma (U) \ar{rrd}{  L_U} \ar[swap]{dd}{\m_\Sigma Q_\Sigma(f)}   \ar[Rightarrow,shorten <= 0.5cm, shorten >= 0.8cm]{ddr}{\alpha_f}        & &      \\
	& & \Ch \ .  \\
	\m_\Sigma Q_\Sigma (V) \ar[swap]{rru}{   L_V     }  & \phantom{x} & 
	\end{tikzcd} 
	\end{equation}
	These transformations need to be compatible with the composition of morphisms in $\Chat(\Sigma)$. 
	Instead of defining $\alpha_f$ for arbitrary morphisms, we can of course also define it on generating morphisms, namely uncolorings (U) and admissible moves (AM), see page~\pageref{notationadmissiblemoves}, and verify that the relations (RU), (RM) and (C) are satisfied.

	On generators, we make the following definitions:
	\begin{enumerate}
		\item[(U)] The uncolorings are sent to the uncoloring maps from Corollary~\ref{coruncoloringmaps}.
		
		\item[(AM)] The definition on the admissible moves induced by the $\barF$-move and the $\barS$-move in $\C(\Sigma)$ is accomplished as follows:
		\begin{enumerate}
			\item[($\barF$)]     An $\barF$-move of cut systems give rise to an admissible move $\barF: U \to V$ of colored cut systems if and only if the deleted cut is not colored. In order to obtain for $\Gamma \in \m_\Sigma Q_\Sigma (U)$ the needed isomorphism
			\begin{align}
			\alpha_{\barF} : \mfcm(\Sigma,\underline{X},  (U,\Gamma)   ) \to 
			\mfcm(\Sigma,\underline{X},  (V,\m_\Sigma(\barF) \Gamma) )\label{alphaFeqn} 
			\ , 
			\end{align}
			we can now use the F-isomorphism from \cite[Definition~3.5~(iii)]{jfcs} which uses the (ordinary) Yoneda Lemma for the morphism spaces in $\cat{C}$.

		\item[($\barS$)] The $\barS$-move of a cut system can be applied to a subsurface of $\Sigma$ of the shape of a torus with one boundary component and one cut. The move replaces  this cut by a transversal one (as depicted in Figure~\ref{figbarfbars}). It gives rise  to an admissible move $\barS: U\to V$ of colored cut systems if and only if the cut that is being replaced is not colored. 
		In order to obtain for $\Gamma \in \m_\Sigma Q_\Sigma (U)$ the needed isomorphism
		\begin{align}
		\alpha_{\barS} : \mfcm(\Sigma,\underline{X},   (U,\Gamma)    ) \to 	\mfcm(\Sigma,\underline{X},  (V,\m_\Sigma(\barS) \Gamma) ) \label{alphaSeqn}
		\end{align}
		we can now use the S-isomorphism from \cite[Definition~3.5~(v)]{jfcs} which makes use of the $\S$-transformation for the canonical coend \cite{luba}. Note that modularity enters in this step because it ensures that one can define the $\S$-transformation.
		\end{enumerate}
		\end{enumerate}
	It is seen as follows that the relations are satisfied:
	It can be easily observed that (RU) and (C) are satisfied. The relations (RM) being satisfied is  a statement about vector space valued marked blocks for the anomaly-free case (similarly to Lemma~\ref{lemmamovesonfibers}), which we will demonstrate for one the five relations in \cite[Section~7.3]{bakifm} for the definition of $\C(\Sigma)$, namely the compatibility of $\barF$ and $\barS$. 
	The induced relations for $\barF$ and $\barS$, seen as \emph{admissible} moves in $\Chat(\Sigma)$, arise by coloring (parts of) the cut systems that appear in this relation while respecting of course the definition of $\Chat(\Sigma)$. An example of such a coloring is shown in Figure~\ref{figrelSF}, and we list all other possible colorings in the caption, but they can all be treated as the one which is shown in the picture.

		\begin{figure}[h]
		\centering
		\begin{tikzpicture}[scale=0.3, style/.style={circle, inner sep=0pt,minimum size=0mm},
		poin/.style={circle, inner sep=0.2pt,minimum size=0.2mm},decoration={
			markings,
			mark=at position 0.5 with {\arrow{>}}}]
		\node [style=none] (0) at (6.75, 2.25) {};
		\node [style=none] (1) at (10.25, 2.25) {};
		\node [style=none] (2) at (6, 3) {};
		\node [style=none] (3) at (11, 3) {};
		\node [style=none] (4) at (8.5, 1.75) {};
		\node [style=none] (5) at (8.5, -0.25) {};
		\node [style=none] (6) at (3, 3.5) {};
		\node [style=none] (7) at (3, 1.5) {};
		\node [style=none] (8) at (8.5, 5) {};
		\node [style=none] (9) at (17, 2.5) {};
		\node [style=none] (10) at (24.75, 5) {};
		\node [style=none] (11) at (21.75, 2.25) {\footnotesize$\barS$};
		\node [style=none] (12) at (0.5, 3.5) {};
		\node [style=none] (13) at (0.5, 1.5) {};
		\node [style=none] (14) at (8.5, 1.75) {};
		\node [style=none] (15) at (8.5, -0.25) {};
		\node [style=none] (18) at (14, 1.5) {};
		\node [style=none] (19) at (16.5, 1.5) {};
		\node [style=none] (20) at (14, 3.5) {};
		\node [style=none] (21) at (16.5, 3.5) {};
		\node [style=none] (22) at (8.5, 1.75) {};
		\node [style=none] (23) at (8.5, -0.25) {};
		\node [style=none] (24) at (14, 15.75) {};
		\node [style=none] (25) at (14, 13.75) {};
		\node [style=none] (26) at (6.75, 14.5) {};
		\node [style=none] (27) at (10.25, 14.5) {};
		\node [style=none] (28) at (6, 15.25) {};
		\node [style=none] (29) at (11, 15.25) {};
		\node [style=none] (30) at (8.5, 14) {};
		\node [style=none] (31) at (8.5, 12) {};
		\node [style=none] (32) at (3, 15.75) {};
		\node [style=none] (33) at (3, 13.75) {};
		\node [style=none] (34) at (8.5, 17.25) {};
		\node [style=none] (35) at (0.5, 15.75) {};
		\node [style=none] (36) at (0.5, 13.75) {};
		\node [style=none] (37) at (8.5, 14) {};
		\node [style=none] (38) at (8.5, 12) {};
		\node [style=none] (39) at (14, 13.75) {};
		\node [style=none] (40) at (16.5, 13.75) {};
		\node [style=none] (41) at (14, 15.75) {};
		\node [style=none] (42) at (16.5, 15.75) {};
		\node [style=none] (43) at (23, 8.25) {};
		\node [style=none] (44) at (26.5, 8.25) {};
		\node [style=none] (45) at (22.25, 9) {};
		\node [style=none] (46) at (27.25, 9) {};
		\node [style=none] (47) at (24.75, 7.75) {};
		\node [style=none] (48) at (24.75, 5.75) {};
		\node [style=none] (49) at (19.25, 9.5) {};
		\node [style=none] (50) at (19.25, 7.5) {};
		\node [style=none] (51) at (24.75, 11) {};
		\node [style=none] (52) at (16.75, 9.5) {};
		\node [style=none] (53) at (16.75, 7.5) {};
		\node [style=none] (54) at (24.75, 7.75) {};
		\node [style=none] (55) at (24.75, 5.75) {};
		\node [style=none] (56) at (30.25, 7.5) {};
		\node [style=none] (57) at (32.75, 7.5) {};
		\node [style=none] (58) at (30.25, 9.5) {};
		\node [style=none] (59) at (32.75, 9.5) {};
		\node [style=none] (60) at (-9.25, 8.25) {};
		\node [style=none] (61) at (-5.75, 8.25) {};
		\node [style=none] (62) at (-10, 9) {};
		\node [style=none] (63) at (-5, 9) {};
		\node [style=none] (64) at (-7.5, 7.75) {};
		\node [style=none] (65) at (-7.5, 5.75) {};
		\node [style=none] (66) at (-13, 9.5) {};
		\node [style=none] (67) at (-13, 7.5) {};
		\node [style=none] (68) at (-7.5, 11) {};
		\node [style=none] (69) at (-15.5, 9.5) {};
		\node [style=none] (70) at (-15.5, 7.5) {};
		\node [style=none] (71) at (-7.5, 7.75) {};
		\node [style=none] (72) at (-7.5, 5.75) {};
		\node [style=none] (73) at (-2, 7.5) {};
		\node [style=none] (74) at (0.5, 7.5) {};
		\node [style=none] (75) at (-2, 9.5) {};
		\node [style=none] (76) at (0.5, 9.5) {};
		\node [style=none] (77) at (8.5, 17.25) {};
		\node [style=none] (78) at (8.5, 15.25) {};
		\node [style=none] (79) at (-13, 9.5) {};
		\node [style=none] (80) at (-13, 7.5) {};
		\node [style=none] (81) at (3, 15.75) {};
		\node [style=none] (82) at (3, 13.75) {};
		\node [style=none] (83) at (30.25, 9.5) {};
		\node [style=none] (84) at (30.25, 7.5) {};
		\node [style=none] (85) at (14, 3.5) {};
		\node [style=none] (86) at (14, 1.5) {};
		\node [style=none] (87) at (-2, 9.5) {};
		\node [style=none] (88) at (-2, 7.5) {};
		\node [style=none] (97) at (-7.5, 11) {};
		\node [style=none] (98) at (-7.5, 9) {};
		\node [style=none] (110) at (-7.5, 11) {};
		\node [style=none] (111) at (-7.5, 9) {};
		\node [style=none] (130) at (-7.5, 7.75) {};
		\node [style=none] (131) at (-7.5, 5.75) {};
		\node [style=none] (132) at (-7.5, 7.75) {};
		\node [style=none] (133) at (-7.5, 5.75) {};
		\node [style=none] (134) at (19.25, 9.5) {};
		\node [style=none] (135) at (19.25, 7.5) {};
		\node [style=none] (136) at (19.25, 9.5) {};
		\node [style=none] (137) at (19.25, 7.5) {};
		\node [style=none] (148) at (24.75, 6.75) {};
		\node [style=none] (149) at (24.75, 10.25) {};
		\node [style=none] (154) at (3, 3.5) {};
		\node [style=none] (155) at (3, 1.5) {};
		\node [style=none] (156) at (3, 3.5) {};
		\node [style=none] (157) at (3, 1.5) {};
		\node [style=none] (158) at (-7.5, 5) {};
		\node [style=none] (159) at (0, 2.5) {};
		\node [style=none] (160) at (-4.25, 2.25) {\footnotesize$\barF$};
		\node [style=none] (161) at (-7.5, 11.75) {};
		\node [style=none] (162) at (0, 14.75) {};
		\node [style=none] (163) at (-4.5, 14.5) {\footnotesize$\barF$};
		\node [style=none] (164) at (17, 14.75) {};
		\node [style=none] (165) at (24.75, 12) {};
		\node [style=none] (166) at (21.5, 15) {\footnotesize$\barS$};
		\draw [bend left=45] (0.center) to (1.center);
		\draw [bend right=60] (2.center) to (3.center);
		\draw [in=0, out=180] (5.center) to (7.center);
		\draw [in=0, out=-180] (8.center) to (6.center);
		\draw [->] (9.center) to (10.center);
		\draw (6.center) to (12.center);
		\draw (7.center) to (13.center);
		\draw [in=-180, out=0] (15.center) to (18.center);
		\draw (18.center) to (19.center);
		\draw (20.center) to (21.center);
		\draw [in=180, out=0] (8.center) to (20.center);
		\draw [bend left=90,dotted] (22.center) to (23.center);
		\draw [bend right=90] (22.center) to (23.center);
		\draw [red, bend left=90,dotted] (24.center) to (25.center);
		\draw [red, bend right=90] (24.center) to (25.center);
		\draw [bend left=45] (26.center) to (27.center);
		\draw [bend right=60] (28.center) to (29.center);
		\draw [in=0, out=180] (31.center) to (33.center);
		\draw [in=0, out=-180] (34.center) to (32.center);
		\draw (32.center) to (35.center);
		\draw (33.center) to (36.center);
		\draw [in=-180, out=0] (38.center) to (39.center);
		\draw (39.center) to (40.center);
		\draw (41.center) to (42.center);
		\draw [in=180, out=0] (34.center) to (41.center);
		\draw [bend left=45] (43.center) to (44.center);
		\draw [bend right=60] (45.center) to (46.center);
		\draw [in=0, out=180] (48.center) to (50.center);
		\draw [in=0, out=-180] (51.center) to (49.center);
		\draw (49.center) to (52.center);
		\draw (50.center) to (53.center);
		\draw [in=-180, out=0] (55.center) to (56.center);
		\draw (56.center) to (57.center);
		\draw (58.center) to (59.center);
		\draw [in=180, out=0] (51.center) to (58.center);
		\draw [bend left=45] (60.center) to (61.center);
		\draw [bend right=60] (62.center) to (63.center);
		\draw [in=0, out=180] (65.center) to (67.center);
		\draw [in=0, out=-180] (68.center) to (66.center);
		\draw (66.center) to (69.center);
		\draw (67.center) to (70.center);
		\draw [in=-180, out=0] (72.center) to (73.center);
		\draw (73.center) to (74.center);
		\draw (75.center) to (76.center);
		\draw [in=180, out=0] (68.center) to (75.center);
		\draw [dotted, bend left=90] (77.center) to (78.center);
		\draw [bend right=90] (77.center) to (78.center);
		\draw [dotted, bend left=90] (79.center) to (80.center);
		\draw [bend right=90] (79.center) to (80.center);
		\draw [dotted, bend left=90] (81.center) to (82.center);
		\draw [bend right=90] (81.center) to (82.center);
		\draw [red, bend left=90,dotted] (83.center) to (84.center);
		\draw [red, bend right=90] (83.center) to (84.center);
		\draw [red, bend left=90,dotted] (85.center) to (86.center);
		\draw [red, bend right=90] (85.center) to (86.center);
		\draw [red, bend left=90,dotted] (87.center) to (88.center);
		\draw [red, bend right=90] (87.center) to (88.center);
		\draw [dotted, bend left=90] (110.center) to (111.center);
		\draw [bend right=90] (110.center) to (111.center);
		\draw [dotted, bend left=90] (132.center) to (133.center);
		\draw [bend right=90] (132.center) to (133.center);
		\draw [dotted, bend left=90] (136.center) to (137.center);
		\draw [bend right=90] (136.center) to (137.center);
		\draw [in=0, out=0, looseness=3.00] (149.center) to (148.center);
		\draw [in=180, out=-180, looseness=3.00] (149.center) to (148.center);
		\draw [dotted, bend left=90] (156.center) to (157.center);
		\draw [bend right=90] (156.center) to (157.center);
		\draw [->] (158.center) to (159.center);
		\draw [->] (161.center) to (162.center);
		\draw [->] (164.center) to (165.center);
		\end{tikzpicture}
		\caption{Pictorial presentation of a colored version of the compatibility of $\barF$ and $\barS$. We would obtain another relation by coloring also the leftmost cut or only the leftmost cut. In any case, the cuts in the middle cannot be colored.}
		\label{figrelSF}
	\end{figure}
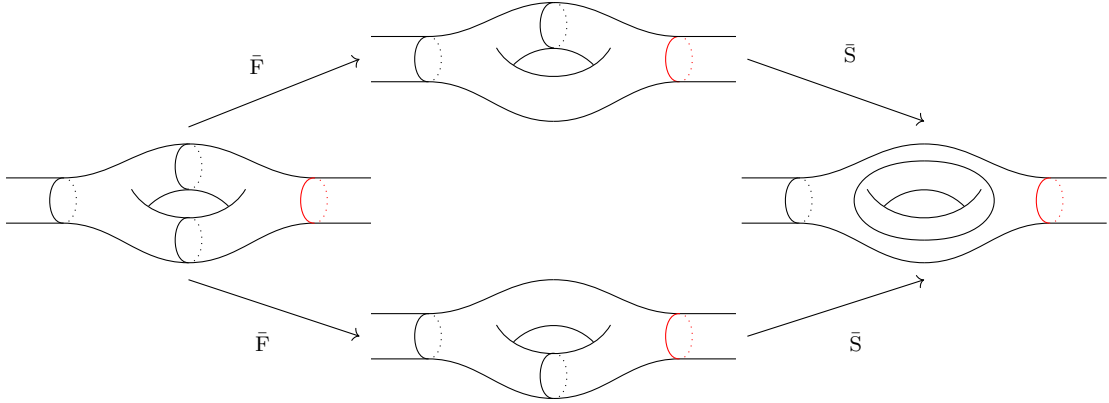

	Verifying that $\alpha_{\barF}$ and $\alpha_{\barS} $ as defined above in \eqref{alphaFeqn} and \eqref{alphaSeqn} satisfy the relation from Figure~\ref{figrelSF} now amounts to a statement about the F-isomorphism and the S-isomorphism for a vector space valued marked block for a torus with two holes. The latter can be extracted from \cite[Section~3.2]{jfcs}, where it is shown that vector space valued marked blocks yields a vector space valued functor defined on the groupoid of (fine) markings. 
	
This concludes the proof that we obtain a functor \eqref{eqnintfunctors}.	The statement that the functor 	sends all morphisms to equivalences is only non-trivial for the uncolorings. In this case, it follows from Corollary~\ref{coruncoloringmaps}.
	\end{proof}

In the next step, we prove that the constructions from Theorem~\ref{thmfunctormhat} are natural (in the appropriate sense) in the labeled extended surface.
In order to make this precise, we define 
for any modular category $\cat{C}$ the category $ \CMhat$
 as the Grothendieck construction
\begin{align}\CMhat:=\int \left(   \cat{C}\text{-}\Surf \to \Surf \xrightarrow{\ \Mhat\ } \Cat   \right) \ \label{eqncmcat} \end{align}
(this definition could  be made for any label set, of course). 
The category $\CMhat$ should be interpreted as the result of categorically gluing together the categories of colored markings over \emph{varying} $\cat{C}$-labeled surfaces.
We denote by
\begin{align} \Pi : \CMhat \to \cat{C}\text{-}\Surf   \label{eqnprojectionfunctor}\end{align}
the projection. Both categories inherit a symmetric monoidal structure from disjoint union such that $\Pi$ is a symmetric monoidal functor.

\begin{proposition}\label{propnatgdc}
	For any anomaly-free modular category $\cat{C}$,
	the functors from Theorem~\ref{thmfunctormhat} induce a symmetric monoidal functor
	\begin{align}
\mfcm \ :\ 	\CMhat \to \Ch \ . 
	\end{align}
	\end{proposition}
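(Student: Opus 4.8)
The plan is to assemble the symmetric monoidal functor $\mfcm:\CMhat\to\Ch$ from the fibrewise functors $\mfcm(\Sigma,\underline X,-):\Mhat(\Sigma)\to\Ch$ of Theorem~\ref{thmfunctormhat}, using once more the fact that $\CMhat$ is a Grothendieck construction, hence a lax colimit (see~\eqref{eqngrothendieckpullback}). Concretely, by~\eqref{eqncmcat} an object of $\CMhat$ is a triple $(\Sigma,\underline X,\Lambda)$ consisting of a $\cat{C}$-labeled surface $(\Sigma,\underline X)$ and a colored marking $\Lambda\in\Mhat(\Sigma)$, and a morphism $(\Sigma,\underline X,\Lambda)\to(\Sigma',\underline X',\Lambda')$ is a pair $(f,\lambda)$ with $f:(\Sigma,\underline X)\to(\Sigma',\underline X')$ a morphism of labeled surfaces (a composite of mapping classes and sewings) and $\lambda:\Mhat(f)\Lambda\to\Lambda'$ a morphism in $\Mhat(\Sigma')$. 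By the universal property of the lax colimit, to give a functor $\CMhat\to\Ch$ is the same as giving, for each labeled surface $(\Sigma,\underline X)$, a functor $\Mhat(\Sigma)\to\Ch$ — we take $\mfcm(\Sigma,\underline X,-)$ from Theorem~\ref{thmfunctormhat} — together with, for each morphism $f:(\Sigma,\underline X)\to(\Sigma',\underline X')$ in $\cat{C}\text{-}\Surf$, a natural transformation comparing $\mfcm(\Sigma,\underline X,-)$ with $\mfcm(\Sigma',\underline X',-)\circ\Mhat(f)$, subject to the evident coherence (compatibility with composition and identities in $\cat{C}\text{-}\Surf$).

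First I would treat the generating morphisms of $\cat{C}\text{-}\Surf$ separately, namely mapping classes and sewings. For a mapping class $\phi:\Sigma\to\Sigma'$ (with labels transported along $\phi$): $\phi$ carries a colored marking $\Lambda$ on $\Sigma$ to a colored marking $\Mhat(\phi)\Lambda$ on $\Sigma'$ by acting on cuts, charts and colorings, and since marked blocks depend only on the combinatorial incidence data of cuts and graphs (the point emphasized on page~\pageref{pageincidences}), the mixed marked block $\mfcm(\Sigma,\underline X,\Lambda)$ is literally \emph{equal} to $\mfcm(\Sigma',\phi\underline X,\Mhat(\phi)\Lambda)$; this gives the required natural isomorphism (in fact identity) on the nose, compatibly with composition of mapping classes, exactly as in the vector-space-valued treatment of~\cite{jfcs}. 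For a sewing $s:\Sigma\to\Sigma'$: the colored marking $\Lambda$ on $\Sigma$ induces $\Mhat(s)\Lambda$ on $\Sigma'$, where the newly created cut is declared \emph{colored} (as specified just before Proposition~\ref{propnatdec0}, to respect~\eqref{eqncoloredcondition}); the mixed block $\mfcm(\Sigma',\underline X,\Mhat(s)\Lambda)$ then has one extra homotopy coend over the label of the glued boundary pair, and the structure map of that homotopy coend furnishes the comparison map $\mfcm(\Sigma,\underline X,(\underline X,P,P),\Lambda)\to\mfcm(\Sigma',\underline X,\Mhat(s)\Lambda)$ — this is precisely the sewing map~\eqref{eqnsewingmaps} recycled in the present mixed setting, and its naturality in the colored marking follows from Proposition~\ref{propcolorconfblocks} together with the Fubini theorem for iterated homotopy coends~\cite[Proposition~2.7]{dva} used to reorder the coends.

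Then I would check the mixed relation~\eqref{eqnmixedrelation} of $\Surf$, i.e. $s'\phi=\phi's$: since mapping classes act by identities on mixed blocks and sewing maps are defined purely via the homotopy-coend structure maps (which are natural), both sides induce the same map on mixed blocks; this is the only compatibility in $\cat{C}\text{-}\Surf$ beyond those already internal to mapping classes and to sewings, which are immediate. Having produced the data and verified the coherences, the universal property of the lax colimit~\eqref{eqngrothendieckpullback} yields the functor $\mfcm:\CMhat\to\Ch$. Finally, symmetric monoidality: $\Pi$ is symmetric monoidal for the disjoint-union structures, $\Mhat$ is a symmetric monoidal functor $\Surf\to\Cat$ (Proposition~\ref{propnatdec}), and the marked blocks are defined on a disjoint union as the tensor product of the blocks of the connected pieces (see the sentence following Definition~\ref{derivedconformalblock}); unwinding the Grothendieck construction over the symmetric monoidal $\cat{C}\text{-}\Surf$, one obtains canonical coherence isomorphisms $\mfcm(A\sqcup B)\cong\mfcm(A)\otimes\mfcm(B)$ and $\mfcm(\emptyset)\cong k$ compatible with associators and the symmetry, using the braiding of $\Ch$ to deal with the unordered tensor products exactly as in Section~\ref{derivedconformalblockssec}.

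The main obstacle I anticipate is purely bookkeeping rather than conceptual: one must be careful that the comparison data assigned to sewings and to mapping classes are strictly (not just up to homotopy) compatible with all composites, so that the lax-colimit universal property applies without first passing to a resolution. This hinges on the fact — already exploited in Theorem~\ref{thmfunctormhat} and originally in~\cite{baki,jfcs} — that mixed marked blocks are defined from incidence combinatorics alone, so mapping classes act by honest identities and sewing maps compose strictly associatively via the Fubini isomorphisms; the anomaly-free hypothesis guarantees the absence of any central correction in these identifications, so that we land in $\cat{C}\text{-}\Surf$ rather than its central extension (the passage to $\cat{C}\text{-}\Surfc$ and the $\Map(\Sigma_g)$-projectivity is handled afterwards, cf.\ Remark~\ref{anomalouscase}).
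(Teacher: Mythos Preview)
Your proposal is correct and follows essentially the same approach as the paper: use the lax-colimit description of the Grothendieck construction to reduce to the fibrewise functors of Theorem~\ref{thmfunctormhat}, supply the comparison transformations by taking identities for mapping classes (via the incidence-combinatorics observation) and homotopy-coend structure maps for sewings, and deduce symmetric monoidality from Proposition~\ref{propnatdec} and the tensor-product-on-disjoint-unions definition of marked blocks. Your explicit verification of the mixed relation~\eqref{eqnmixedrelation} and your remark on strictness are welcome additions, but the overall architecture matches the paper's proof.
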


\begin{proof}
	By the definition of the Grothendieck construction  a functor  $\mfcm : 	\CMhat \to \Ch$ amounts to functors
	\begin{align}
	\Mhat(\Sigma) \to \Ch \label{eqnneedefunctorssurf}
	\end{align} 
	for each extended surface $\Sigma$ with projective boundary label $\underline{X}$
	and a consistent system of natural transformations
	(that we will elaborate on in a moment; afterwards, we also comment on the compatibility with the monoidal structure).
	For the functors \eqref{eqnneedefunctorssurf}, we take the functors $\mfcm(\Sigma,\underline{X},-) : \Mhat(\Sigma)\to\Ch$
	provided by Theorem~\ref{thmfunctormhat}.

	Additionally, we need to specify for any morphism $f: (\Sigma,\underline{X})\to(\Sigma',\underline{X'})$ in $\cat{C}\text{-}\Surf$
	a natural transformation
		\begin{equation}\label{eqnspecifytransf}
	\begin{tikzcd}
	\Mhat(\Sigma) \ar{rrd}{    \mfcm(\Sigma,   \underline{X} , -) } \ar[swap]{dd}{\Mhat(f)}   \ar[Rightarrow,shorten <= 0.5cm, shorten >= 0.8cm]{ddr}{\xi_f}        & &      \\
	& & \Ch \\
	\Mhat(\Sigma') \ar[swap]{rru}{     \mfcm(\Sigma',   \underline{X'}  , -)      }  & \phantom{x} & 
	\end{tikzcd} 
	\end{equation}
	such that the transformations $\xi_f$ respect the composition
	in $\cat{C}\text{-}\Surf$.
	We specify these transformations separately for sewings and mapping classes:

	\begin{itemize}
		
		\item
		Let 
		$s : (\Sigma,(\underline{X},P,P)) \to (\Sigma',\underline{X})$ be a sewing morphism in $\cat{C}\text{-}\Surf$ that glues an ingoing to an outgoing boundary component which are both labeled with $P$ (without loss of generality, it suffices to consider sewings of this form). 
		From \eqref{eqndefinitionalequality}, we can now read off that there is a canonical map
		\begin{align}
		 \mfcm(\Sigma,  ( \underline{X},P,P) , \Lambda) \to  \mfcm(\Sigma',   \underline{X} , \Mhat(s)(\Lambda))
		\end{align}
		for any colored marking $\Lambda$ in $\Sigma$
		(coming just from the definition of the homotopy coend;
		 compare also to the sewing maps \eqref{eqnsewingmaps}).
		This map can be easily seen to be natural in $\Lambda$, i.e.\ we get a natural transformation 
		\begin{equation}
		\begin{tikzcd}
		\Mhat(\Sigma) \ar{rrd}{  \mfcm(\Sigma,  ( \underline{X},P,P) , -)      } \ar[swap]{dd}{\Mhat(s)}   \ar[Rightarrow,shorten <= 0.5cm, shorten >= 0.8cm]{ddr}{\xi_s}        & &      \\
		& & \Ch \\
		\Mhat(\Sigma') \ar[swap]{rru}{ \mfcm(\Sigma,   \underline{X} , -)    }  & \phantom{x} & 
		\end{tikzcd} 
		\end{equation}
		These transformations preserve the composition of sewings strictly.

		\item It is an important observation that the definition of marked blocks for a marked surface (or their generalizations to colored markings) just depends on the incidences of (colored) cuts and markings (the relative location of these objects to each other, see also the explanations on page~\pageref{pageincidences}) --- and these incidences do not change when we act with a mapping class. 
		As a consequence of this observation, 
		for any mapping class $\phi : \Sigma \to \Sigma'$ seen as morphism $(\Sigma,\underline{X})\to (\Sigma',\underline{X'})$ in $\cat{C}\text{-}\Surf$
		the triangle 
			\begin{equation}\label{eqnstandardsurface0}
		\begin{tikzcd}
		\Mhat(\Sigma) \ar{rrd}{  \mfcm(\Sigma,   \underline{X} , -)    } \ar[swap]{dd}{\Mhat(\phi)}  & 
		& &      \\
		& & \Ch \\
		\Mhat(\Sigma') \ar[swap]{rru}{  \mfcm(\Sigma,   \underline{X'} , -)    }  & \phantom{x} & 
		\end{tikzcd} 
		\end{equation}
		commutes, so we can actually use the identity transformation to fill this triangle.

		\end{itemize}
	Since the transformations corresponding to  sewings respect composition and since the transformations corresponding to mapping classes are  identities, we conclude that the functors $  \mfcm(\Sigma,   \underline{X} , -)     : \Mhat(\Sigma)\to\Ch$ induce a functor $\CMhat \to \Ch$. 
	
	Moreover, the functor $\cat{C}\text{-}\Surf\to\Surf\ra{\Mhat}\Cat$ is symmetric monoidal (Proposition~\ref{propnatdec}),
	and the functors \eqref{eqnneedefunctorssurf} as well as the transformations \eqref{eqnspecifytransf} are compatible with this monoidal structure.
	Therefore, the functor 
	$\CMhat \to \Ch$ just constructed is also  symmetric monoidal.
	\end{proof}

\subsection{Homotopy left Kan extension\label{secleftkan}}
The functor
$\mfcm\ :\ 	\CMhat \to \Ch$
from Proposition~\ref{propnatgdc} is defined on a category of labeled extended surfaces equipped with a colored marking. In order to obtain a functor defined directly on the  category of $\cat{C}$-labeled surfaces, we use a homotopy left Kan extension along the functor 
$\Pi : \CMhat \to \cat{C}\text{-}\Surf$ from   \eqref{eqnprojectionfunctor}.

Recall that for any  
 category $\cat{S}$,
 the category $\Ch^\cat{S}$ of functors $\cat{S}\to\Ch$ can be equipped with the projective model structure. For any functor $\Phi : \cat{S}\to\cat{T}$, we obtain a Quillen pair
\begin{align}\label{eqn:introadjunction}
\xymatrix{
	\Phi_! \,:\, \Ch^{\cat{S}}        ~\ar@<0.5ex>[r]&\ar@<0.5ex>[l]  ~\Ch^{\cat{T}}  \,:\, \Phi^* \  
}
\end{align} by left Kan extension. We denote the homotopy left Kan extension, i.e.\ the left derivative of $\Phi_!$, by $\mathbb{L} \Phi_!$.

\begin{definition}\label{defdmfconstruc}
	For any anomaly-free modular category $\cat{C}$,
	we define the functor 
	\begin{align} \mfc:=\mathbb{L}\Pi_! \mfcm  \ :\ \cat{C}\text{-}\Surf \to \Ch \label{defmfc}
	\end{align} as the homotopy left Kan extension 
	of the functor $\mfcm  :   \CMhat  \to \Ch$ from Proposition~\ref{propnatgdc}
	 along the functor $\Pi : \CMhat \to \cat{C}\text{-}\Surf$ from   \eqref{eqnprojectionfunctor}.
\end{definition}

\begin{remark}\label{remjfcs}
	In \cite[Section~3.3]{jfcs} a right Kan extension along an unmarking functor $U:\mSurf\to\Surf$ from a category $\mSurf$ of marked surfaces to the category of surfaces is used for the construction of a so-called \emph{pinned block functor}.
	The use of the Kan extension in Definition~\ref{defdmfconstruc} seems similar, but the resemblance is on a purely formal level. The category $\mSurf$ in \cite{jfcs} is not equivalent to  $\CMhat$ (because $\mSurf$ actually has no non-trivial automorphisms),
	 and the unmarking functor $U$ in \cite{jfcs} is not  a projection functor like $\Pi$. It serves the entirely different purpose to translate  moves to mapping classes.
	\end{remark}

The notation $\mfc(\Sigma,\underline{X})$ suggests a relation to the complexes $\mfcm(\Sigma,\underline{X},\Lambda)$ from Theorem~\ref{thmfunctormhat} that additionally depended on a colored marking on $\Sigma$. This notation is justified by the next result:

\begin{proposition}\label{propcomputeleftkan}
	For any anomaly-free modular category $\cat{C}$ and any extended surface $\Sigma$ with projective boundary label $\underline{X}$, 
	there is a canonical equivalence
	\begin{align}
	\label{eqnpropcomputeleftkan1}	  \hocolimsub{\Lambda \in \Mhat(\Sigma)} \mfcm(\Sigma,\underline{X},\Lambda) \ra{\simeq} \mfc(\Sigma,\underline{X}) \ . 
	\end{align}
	After the choice of a colored marking $\Lambda$ on $\Sigma$, there is a canonical equivalence
	\begin{align}
	\label{eqnpropcomputeleftkan2}	\mfcm(\Sigma,\underline{X},\Lambda) \ra{\simeq} \mfc(\Sigma,\underline{X}) \ . 
	\end{align}
\end{proposition}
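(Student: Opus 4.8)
The plan is to compute the homotopy left Kan extension $\mathbb{L}\Pi_!\mfcm$ pointwise via the standard homotopy-colimit formula, and then exploit the contractibility result Theorem~\ref{thmmhatcontractible} together with the fact that $\mfcm(\Sigma,\underline{X},-)$ sends all morphisms of $\Mhat(\Sigma)$ to equivalences (Theorem~\ref{thmfunctormhat}).

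First I would recall that for a functor $\Pi:\CMhat\to\cat{C}\text{-}\Surf$ and an object $(\Sigma,\underline{X})\in\cat{C}\text{-}\Surf$, the homotopy left Kan extension is computed by
\begin{align}
\mfc(\Sigma,\underline{X})=(\mathbb{L}\Pi_!\mfcm)(\Sigma,\underline{X})\simeq \hocolimsub{(\,(\Sigma',\underline{X'}),\Lambda,f\,)\in \Pi\downarrow(\Sigma,\underline{X})} \mfcm(\Sigma',\underline{X'},\Lambda)\ ,
\end{align}
the homotopy colimit running over the comma category $\Pi\downarrow(\Sigma,\underline{X})$ whose objects are pairs of a colored marking on some labeled surface $(\Sigma',\underline{X'})$ together with a morphism $f:(\Sigma',\underline{X'})\to(\Sigma,\underline{X})$ in $\cat{C}\text{-}\Surf$. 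The key step is then to identify a cofinal (in the homotopy-theoretic sense, i.e.\ homotopy-final) subcategory of $\Pi\downarrow(\Sigma,\underline{X})$: namely the fiber $\Pi^{-1}(\Sigma,\underline{X})=\Mhat(\Sigma)$ sitting inside via $\Lambda\mapsto(\Lambda,\id_{(\Sigma,\underline{X})})$. Because $\Pi$ is built from the Grothendieck construction $\CMhat=\int(\cat{C}\text{-}\Surf\to\Surf\xrightarrow{\Mhat}\Cat)$ and $\Pi$ is the associated projection, it is an (op)fibration in categories; for such a functor the inclusion of the fiber into the comma category $\Pi\downarrow(\Sigma,\underline{X})$ admits a left (or right) adjoint, hence is homotopy-final. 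This gives the equivalence \eqref{eqnpropcomputeleftkan1}:
\begin{align}
\mfc(\Sigma,\underline{X})\simeq \hocolimsub{\Lambda\in\Mhat(\Sigma)}\mfcm(\Sigma,\underline{X},\Lambda)\ .
\end{align}

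Next, for \eqref{eqnpropcomputeleftkan2}, I would use that by Theorem~\ref{thmfunctormhat} the functor $\mfcm(\Sigma,\underline{X},-):\Mhat(\Sigma)\to\Ch$ sends \emph{all} morphisms to equivalences, so it factors (up to equivalence) through the $\infty$-localization $\K(\Sigma)$ of $\Mhat(\Sigma)$ at all morphisms --- equivalently, it is a homotopy-locally-constant diagram. For a locally constant diagram $D:\cat{A}\to\Ch$ the canonical map $D(a)\to\hocolim_{\cat{A}}D$ is an equivalence whenever $|N\cat{A}|$ is contractible (one way: $\hocolim_{\cat{A}}D\simeq \hocolim_{|N\cat{A}|}\widetilde D$ for the local system $\widetilde D$ corresponding to $D$, and a contractible space has trivial fundamental groupoid so $\widetilde D$ is a constant system, whose homotopy colimit over a contractible space is the stalk). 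Since $\Mhat(\Sigma)$ is contractible by Theorem~\ref{thmmhatcontractible}, applying this with $a=\Lambda$ yields the canonical equivalence $\mfcm(\Sigma,\underline{X},\Lambda)\xrightarrow{\simeq}\hocolim_{\Lambda'\in\Mhat(\Sigma)}\mfcm(\Sigma,\underline{X},\Lambda')\simeq\mfc(\Sigma,\underline{X})$, which is \eqref{eqnpropcomputeleftkan2}. Naturality/canonicity in $\Lambda$ is automatic because every structure map is a quasi-isomorphism and the whole diagram of them is coherent.

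The main obstacle I anticipate is the homotopy-finality argument: one must be careful that $\Pi$ really is (the projection of) a Grothendieck construction in a way that makes the fiber inclusion into the comma category a right-adjoint (or at least a homotopy equivalence) at each object of $\cat{C}\text{-}\Surf$ --- this is where the precise form of $\CMhat$ in \eqref{eqncmcat} and of $\Pi$ in \eqref{eqnprojectionfunctor} enters, and where one invokes a Quillen's Theorem~A / Thomason-type statement. A secondary (but routine) point is to make precise the notion of a homotopy-locally-constant diagram and the fact that its homotopy colimit over a contractible category recovers the stalk; this can be cited from standard references on homotopy colimits (e.g.\ via the comparison with the Bousfield--Kan formula and the observation that $\mfcm(\Sigma,\underline{X},-)$ factors through $\K(\Sigma)$, which is a contractible $\infty$-groupoid by Theorem~\ref{thmmhatcontractible}).
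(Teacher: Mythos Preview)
Your proposal is correct and follows essentially the same strategy as the paper's proof. The paper packages the homotopy-finality of the fiber inclusion $\Mhat(\Sigma)\hookrightarrow \Pi/(\Sigma,\underline{X})$ into a standalone Lemma~\ref{lemmaUfinal} (proved by appeal to \cite[Proposition~4.3.3.10]{htt}), whereas you obtain it from the opfibration structure of the Grothendieck construction and the resulting left adjoint to the fiber inclusion; these are two routes to the same fact. For \eqref{eqnpropcomputeleftkan2} the paper likewise passes through the localization $\K(\Sigma)$, invokes that $\infty$-localizations are homotopy final (\cite[Proposition~7.1.10]{cisinski}), and then uses contractibility of $\K(\Sigma)$ to conclude that $\star\to\K(\Sigma)$ is homotopy final --- which is precisely your ``locally constant diagram over a contractible index category'' argument, just phrased in cofinality language.
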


The proof of Proposition~\ref{propcomputeleftkan} will need a standard Lemma. First we establish some notation and terminology: 
For a functor $L:\cat{A}\to\cat{B}$ and $b\in \cat{B}$, we denote by $L/b$ the slice category of pairs $(a,f)$ of $a\in \cat{A}$ and a morphism $f:L(a)\to b$. 
A morphism $(a,f)\to (a',f')$ in $L/b$ is a morphism $g: a \to a'$ such that $f'L(g)=f$. Dually, we can define the slice category $b/L$.
A functor $L:\cat{A}\to\cat{B}$ is called \emph{homotopy final} if for each $b\in \cat{B}$ the slice category $b/L$ is contractible in the sense that $|N(b/L)|$ is equivalent to a point.

\begin{lemma}\label{lemmaUfinal}
	For any functor $F:\cat{B}\to\Cat$, the forgetful functor $\pi:\int F \to \cat{B}$ has the property that the natural functor $K_b : F(b) \to \pi/b$ for any $b\in \cat{B}$ is homotopy final.
\end{lemma}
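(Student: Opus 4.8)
The plan is to exhibit, for each $b \in \cat{B}$, an explicit adjunction exhibiting $F(b)$ as a coreflective (or reflective) subcategory of $\pi/b$, which then immediately forces $|N F(b)| \simeq |N(\pi/b)|$ and, since we actually want the stronger homotopy-finality statement, to show directly that $F(b)$ sits inside $\pi/b$ as a full reflective subcategory. Recall that an object of $\pi/b$ is a pair $\big((c,x), g\big)$ with $c\in\cat{B}$, $x\in F(c)$, and $g\colon c\to b$ a morphism in $\cat{B}$; a morphism to $\big((c',x'),g'\big)$ is a morphism $(h,\alpha)\colon (c,x)\to(c',x')$ in $\int F$ (so $h\colon c\to c'$ in $\cat{B}$ and $\alpha\colon F(h)x\to x'$ in $F(c')$) such that $g'h = g$. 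The functor $K_b\colon F(b)\to \pi/b$ sends $x\in F(b)$ to $\big((b,x),\id_b\big)$.

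First I would write down a candidate left adjoint $R_b\colon \pi/b \to F(b)$ to $K_b$, namely $R_b\big((c,x),g\big) := F(g)x \in F(b)$, with the evident action on morphisms using $\alpha$ and the relation $g'h=g$. Then I would check the triangle: for $x\in F(b)$ we have $R_b K_b(x) = F(\id_b)x = x$ (using that $F$ is a functor, so $F(\id_b)=\id_{F(b)}$), so $R_b K_b = \id_{F(b)}$, i.e.\ $K_b$ is fully faithful. Next, for each object $\big((c,x),g\big)$ of $\pi/b$, I would produce the unit morphism $\big((c,x),g\big) \to K_b R_b\big((c,x),g\big) = \big((b, F(g)x),\id_b\big)$, given by the pair $(g, \id_{F(g)x})$ — this is a valid morphism in $\pi/b$ since $\id_b \circ g = g$. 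A short diagram chase shows this unit is universal among morphisms from $\big((c,x),g\big)$ to objects in the image of $K_b$, so $K_b$ is right adjoint to $R_b$. Since a right adjoint (indeed, either adjoint) induces a homotopy equivalence on nerves, $|N(\pi/b)| \simeq |N F(b)|$.

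It remains to conclude contractibility of $|N(\pi/b)|$, i.e.\ the homotopy finality of $K_b$. But the adjunction does not by itself make $|N(\pi/b)|$ contractible; rather, homotopy finality of $K_b$ means precisely that for every object of $\pi/b$ the relevant slice is contractible. The clean way is to invoke the standard fact: if $K\colon \cat{D}\to\cat{E}$ admits a left adjoint $R$, then $K$ is homotopy final. This is because for $e\in\cat{E}$ the slice $e/K$ has an initial object, namely the unit $\eta_e\colon e \to K R e$; hence $|N(e/K)|\simeq\star$. Applying this with $\cat{D}=F(b)$, $\cat{E}=\pi/b$, and $e$ an arbitrary object of $\pi/b$, we get that $e/K_b$ has an initial object and is therefore contractible. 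This is exactly the definition of $K_b$ being homotopy final, completing the proof.

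The only mild subtlety — and the step I would be most careful about — is keeping the variance straight: $\int F$ here is the \emph{lax colimit}, with morphisms $(h,\alpha)\colon(c,x)\to(c',x')$ involving $\alpha\colon F(h)x\to x'$ (the covariant convention used in \eqref{eqngrothendieckpullback} and throughout Section~\ref{seccolcutmarking}), so $\pi$ is the obvious projection and $K_b$ is a \emph{right} adjoint with \emph{left} adjoint $R_b$ given by the transition functors $F(g)$. With the opposite (oplax / sheaf-type) convention the adjunction would flip; since the paper consistently uses categories fibered in groupoids corresponding to cosheaves, the above orientation is the correct one, and no genuine obstacle arises beyond this bookkeeping.
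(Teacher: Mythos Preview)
Your proof is correct. You construct the left adjoint $R_b\colon \pi/b \to F(b)$, $((c,x),g)\mapsto F(g)x$, verify that $K_b$ is fully faithful with $R_bK_b=\id_{F(b)}$, exhibit the unit $(g,\id_{F(g)x})$, and then invoke the standard fact that a right adjoint is homotopy final because each slice $e/K_b$ has the unit as an initial object. All the bookkeeping with the covariant Grothendieck construction is handled correctly.

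The paper takes a different route: it does not give a direct argument at all, but simply observes that the statement is a special case of \cite[Proposition~4.3.3.10]{htt} in the $\infty$-categorical setting. Your approach is more elementary and self-contained; it avoids importing $\infty$-categorical machinery for what is, in the $1$-categorical situation at hand, a one-line adjunction argument. The trade-off is that the citation to Lurie covers the pseudofunctor case (and much more) uniformly, whereas your argument as written uses $F(\id_b)=\id_{F(b)}$ and $F(g'h)=F(g')F(h)$ strictly. Since the lemma is stated for an honest functor $F\colon\cat{B}\to\Cat$ this is not a gap, but it is worth being aware that for the pseudofunctor $\m_\Sigma$ appearing in \eqref{eqnsmallmfunctor} one would either strictify first or insert the coherence isomorphisms into the unit.

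One expository remark: the sentence ``It remains to conclude contractibility of $|N(\pi/b)|$, i.e.\ the homotopy finality of $K_b$'' momentarily conflates two different statements before you immediately correct yourself. In a cleaned-up version you could drop the detour through $|N(\pi/b)|\simeq |NF(b)|$ entirely, since the adjunction already gives the initial object in each $e/K_b$ directly, which is all that homotopy finality requires.
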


A proof can be deduced from the more general statement \cite[Proposition~4.3.3.10]{htt} in the context of $\infty$-categories.

\begin{proof}[{\slshape Proof~of~Proposition~\ref{propcomputeleftkan}}]
	By \eqref{defmfc} and  the homotopy colimit formula for the homotopy left Kan extension we arrive at
	\begin{align}
	\mfc(\Sigma,\underline{X}) = \hocolim \left(   \Pi / (\Sigma,\underline{X}) \to \CMhat \ra{   \mfcm    }  \Ch      \right) \ , 
	\end{align}
	where $\Pi / (\Sigma,\underline{X}) \to \CMhat$ is the forgetful functor. 
	The natural functor $K_{\Sigma,\underline{X}} : \Mhat(\Sigma) \to \Pi / (\Sigma,\underline{X}) $
	(that appears for an arbitrary Grothendieck construction in Lemma~\ref{lemmaUfinal}) induces a map
	\begin{align}
 \hocolimsub{\Lambda \in \Mhat(\Sigma)}\mfcm(\Sigma,\underline{X},\Lambda)=	& \hocolim \left( \Mhat(\Sigma) \ra{K_{\Sigma,\underline{X}}}  \Pi / (\Sigma,\underline{X}) \to \CMhat \ra{   \mfcm    }  \Ch      \right)\\ 
	 \to & \hocolim \left(   \Pi / (\Sigma,\underline{X}) \to \CMhat \ra{   \mfcm    }  \Ch      \right)  =\mfc(\Sigma,\underline{X})
	\end{align}
	(the equality in the first line holds by definition of $ \mfcm $ in Proposition~\ref{propnatgdc}).
	For the proof of \eqref{eqnpropcomputeleftkan1}, it remains to prove that this map is an equivalence,
	but this follows from Lemma~\ref{lemmaUfinal} which states $K_{\Sigma,\underline{X}}$ is homotopy final, which implies that the map induced between the homotopy colimits is an equivalence (see e.g.\ \cite[Theorem~II.8.5.6]{riehl} for this standard result).
	
For the proof of \eqref{eqnpropcomputeleftkan2}, it suffices to prove that the canonical map
	\begin{align}
	\mfcm(\Sigma,\underline{X},\Lambda) \to \hocolimsub{\Lambda \in \Mhat(\Sigma)} \mfcm(\Sigma,\underline{X},\Lambda)\label{eqnrhshocolim}
	\end{align}
	is an equivalence (because then we can compose with \eqref{eqnpropcomputeleftkan1}).
	This can be concluded from the contractibility of the $\infty$-groupoid $\K(\Sigma)$ obtained by $\infty$-localization of $\Mhat(\Sigma)$ at all uncolorings:
	The right hand side of \eqref{eqnrhshocolim}
	is the homotopy colimit of the functor $\Mhat(\Sigma)\to\Ch$ from Theorem~\ref{thmfunctormhat} which, additionally, has the property that it sends all uncolorings to equivalences and hence descends to $\K(\Sigma)$ without changing the homotopy colimit (because $\infty$-localizations are homotopy final \cite[Proposition 7.1.10]{cisinski}).
	It suffices now to prove that the map $\star \to \K(\Sigma)$ selecting $\Lambda$ is homotopy final, but this follows from \cite[Corollary~4.1.2.6]{htt} because $\K(\Sigma)$ is a contractible Kan complex by  Theorem~\ref{thmcolorlego}.
\end{proof}

\begin{corollary}\label{corblockmfc}
	Let $\cat{C}$ an anomaly-free modular category. Then for any extended surface $\Sigma$ with projective boundary label $\underline{X}$ and any marking $\Gamma$ on $\Sigma$, there is a canonical equivalence
	\begin{align}
\block_\cat{C}^{\Sigma,\Gamma}(\underline{X}) \ra{\simeq}
\mfc(\Sigma,\underline{X}) \ .  \label{eqnagreementconfblock}
\end{align}
	\end{corollary}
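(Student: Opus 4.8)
\textbf{Proof plan for Corollary~\ref{corblockmfc}.}
The plan is to produce the equivalence \eqref{eqnagreementconfblock} by going through colored markings and invoking the two halves of Proposition~\ref{propcomputeleftkan}. First I would promote the ordinary marking $\Gamma$ to a colored marking: on every closed connected component of $\Sigma$ pick one cut of $\Gamma$ and declare it colored (if a component is closed it has at least one cut since it has positive genus after cutting to genus zero; if a component has a boundary component, we need no colored cut there), and on components with boundary color no cut. By the defining inequality \eqref{eqncoloredcondition} this yields a legitimate colored marking $\Lambda$ on $\Sigma$ whose underlying marking is $\Gamma$. This is the only real choice in the argument, and it is exactly the choice that Proposition~\ref{propcomputeleftkan} tolerates.

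Next I would invoke Proposition~\ref{propcolorconfblocks}: the partial augmentation fibration
\[
\varepsilon_\Lambda : \block_\cat{C}^{\Sigma,\Gamma}(\underline{X}) \xrightarrow{\ \simeq\ } \mfcm(\Sigma,\underline{X},\Lambda)
\]
is a trivial fibration, because $\Sigma_\Lambda$ (the surface obtained by cutting at all colored cuts) has at least one boundary component in every connected component by construction of $\Lambda$. Then I would apply the second part of Proposition~\ref{propcomputeleftkan}, namely \eqref{eqnpropcomputeleftkan2}, which gives a canonical equivalence
\[
\mfcm(\Sigma,\underline{X},\Lambda) \xrightarrow{\ \simeq\ } \mfc(\Sigma,\underline{X}) \ .
\]
Composing $\varepsilon_\Lambda$ with this equivalence yields the desired canonical (zigzag) equivalence $\block_\cat{C}^{\Sigma,\Gamma}(\underline{X}) \xrightarrow{\simeq} \mfc(\Sigma,\underline{X})$.

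The remaining point is to argue that the result does not depend on the auxiliary coloring chosen above, i.e.\ that the equivalence is canonical as a function of $\Gamma$ alone. For this I would use that any two colorings of $\Gamma$ are connected inside $\Mhat(\Sigma)$ by a zigzag of uncolorings (one can always uncolor down to, or color up from, a common coloring), that $\mfcm(\Sigma,\underline{X},-)$ sends all uncolorings to equivalences (Corollary~\ref{coruncoloringmaps}), and that these uncoloring maps are compatible with the partial augmentation fibrations via the commutative triangle in the proof of Corollary~\ref{coruncoloringmaps}; together with the fact that $\K(\Sigma)$ is a contractible Kan complex (Theorem~\ref{thmmhatcontractible}), so all the comparison maps into $\hocolim_{\Lambda}\mfcm(\Sigma,\underline{X},\Lambda)$ agree up to canonical homotopy. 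I expect this independence/canonicity bookkeeping to be the only mildly delicate part; the core equivalence itself is essentially a one-line composition of Proposition~\ref{propcolorconfblocks} and Proposition~\ref{propcomputeleftkan}.
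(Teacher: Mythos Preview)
Your argument is correct, but it takes a detour that the paper avoids. You choose a \emph{minimal} coloring $\Lambda$ (one colored cut per closed component, none elsewhere), which forces you to invoke Proposition~\ref{propcolorconfblocks} to pass from $\block_\cat{C}^{\Sigma,\Gamma}(\underline{X})$ to $\mfcm(\Sigma,\underline{X},\Lambda)$ via the partial augmentation fibration, and then to justify independence of the auxiliary choice of coloring.

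The paper instead takes the \emph{maximal} coloring: declare \emph{all} cuts of $\Gamma$ to be colored, obtaining $\Gamma^{\mathrm{c}}$. This is automatically a valid colored marking (condition~\eqref{eqncoloredcondition} holds because any closed component carries at least one cut), and it is canonical---there is no choice to make. Since every cut is colored, no homotopy coend is replaced by an ordinary coend, so $\mfcm(\Sigma,\underline{X},\Gamma^{\mathrm{c}}) = \block_\cat{C}^{\Sigma,\Gamma}(\underline{X})$ holds on the nose, not just up to equivalence. Then a single application of \eqref{eqnpropcomputeleftkan2} finishes the proof in one line. Your approach works and uses the same ingredients (Proposition~\ref{propcomputeleftkan} plus the machinery of colored markings), but the paper's choice of coloring eliminates both the need for Proposition~\ref{propcolorconfblocks} here and the entire canonicity discussion in your final paragraph.
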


	\begin{proof}
		We observe that by coloring all cuts of $\Gamma$ we obtain a colored marking $\Gamma^\text{c}$ such that $\mfcm(\Sigma,\underline{X},\Gamma^\text{c}) =\block_\cat{C}^{\Sigma,\Gamma}(\underline{X})$ holds by definition. Now we use \eqref{eqnpropcomputeleftkan2} from Proposition~\ref{propcomputeleftkan}. 
		\end{proof}

\begin{theorem}\label{thmalmostmain}
	Let $\cat{C}$ be an anomaly-free modular category.
	Then the functor \begin{align}
	\mfc\  :\   \cat{C}\text{-}\Surf   \to \Ch \label{eqnthedmf}
	\end{align} from Definition~\ref{defdmfconstruc} is a   modular functor with values in chain complexes
	for the category $\cat{C}$ in the sense of Definition~\ref{defdmf}.
\end{theorem}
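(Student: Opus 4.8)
The plan is to verify the two defining properties of a modular functor for $\cat{C}$ from Definition~\ref{defdmf}: first, that $\mfc$ is a symmetric monoidal functor $\cat{C}\text{-}\Surf\to\Ch$ satisfying excision, and second, that its cylinder category is equivalent to $\Proj\cat{C}$. The symmetric monoidality is essentially automatic: the functor $\mfcm:\CMhat\to\Ch$ is symmetric monoidal by Proposition~\ref{propnatgdc}, the projection $\Pi:\CMhat\to\cat{C}\text{-}\Surf$ is symmetric monoidal, and the homotopy left Kan extension of a symmetric monoidal functor along a symmetric monoidal functor is again symmetric monoidal (this uses that $\Ch$ is a symmetric monoidal model category and that $\Pi$ is, componentwise on the monoidal decomposition, a disjoint union of slice projections, so the Kan extension commutes with the monoidal product; one checks the relevant colimits distribute over $\otimes$).

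The substantive point is excision. Fix a sewing $s:\Sigma\to\Sigma'$ gluing an incoming to an outgoing boundary component, and a projective boundary label $\underline{X}$ for the remaining components. By Proposition~\ref{propcomputeleftkan}~\eqref{eqnpropcomputeleftkan2}, after choosing any colored marking $\Lambda$ on $\Sigma$, we have a canonical equivalence $\mfcm(\Sigma,\underline{X},\Lambda)\ra{\simeq}\mfc(\Sigma,\underline{X})$, and similarly for $\Sigma'$ with the induced colored marking $\Lambda':=\Mhat(s)(\Lambda)$. Now I would choose $\Lambda$ so that \emph{all} its cuts are colored — i.e.\ take the colored marking $\Gamma^{\mathrm{c}}$ attached to a marking $\Gamma$ as in Corollary~\ref{corblockmfc} — so that $\mfcm(\Sigma,\underline{X},\Gamma^{\mathrm{c}})=\block_\cat{C}^{\Sigma,\Gamma}(\underline{X})$ by definition. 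Under the equivalences of Proposition~\ref{propcomputeleftkan}, the excision map \eqref{eqnexcisionmap1} for $\mfc$ is then identified with the sewing map \eqref{eqnsewingmaps} for marked blocks, which is an equivalence (in fact an isomorphism) by Proposition~\ref{propexcision}. One must check that the cylinder category $\cat{Z}_\mfc$ appearing in \eqref{eqnexcisionmap1} is, up to equivalence, $\Proj\cat{C}$ so that the homotopy coend over $\cat{Z}_\mfc$ matches the homotopy coend over $\Proj\cat{C}$ used in the definition of marked blocks — this is exactly the identification of the cylinder category, discussed next. The compatibility of the equivalences with sewing maps follows from the explicit description of $\xi_s$ in the proof of Proposition~\ref{propnatgdc}, where the transformation attached to a sewing is precisely the structure map of the homotopy coend.

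For the cylinder category: evaluating $\mfc$ on the cylinder $\mathbb{S}^1\times[0,1]$ with boundary labels $(X,Y)\in(\Proj\cat{C})_0^2$ gives, via Corollary~\ref{corblockmfc} with the standard marking without cuts, $\block_\cat{C}^{\mathbb{S}^1\times[0,1],\Gamma}(X,Y)=\cat{C}(I,X^\vee\otimes Y)\cong\cat{C}(X,Y)$ by formula~\eqref{eqndmfoncyl} and duality. I would then check that the composition induced by sewing cylinders (as in Remark~\ref{cylcatfunctorialdeprem}) corresponds under this isomorphism to composition in $\Proj\cat{C}$; this is a direct consequence of the definition of the $\F$-isomorphism on marked blocks (gluing a cut between two cylinders and deleting it via the Yoneda Lemma reproduces composition of morphisms). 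Hence $\cat{Z}_\mfc\simeq\Proj\cat{C}$, which both completes the identification of the homotopy coend in the excision statement and establishes the second condition of Definition~\ref{defdmf}.

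The main obstacle I anticipate is the bookkeeping in the excision argument: one must verify that the canonical equivalence $\mfcm(\Sigma,\underline{X},\Lambda)\simeq\mfc(\Sigma,\underline{X})$ is genuinely \emph{natural} in the sewing morphism $s$ — i.e.\ that the square relating these equivalences over $\Sigma$ and $\Sigma'$ to the respective excision/sewing maps commutes — so that the excision map for $\mfc$ is identified with \eqref{eqnsewingmaps}. This requires tracing through the homotopy-finality arguments (Lemma~\ref{lemmaUfinal}) and the construction of the transformations $\xi_s$ in Proposition~\ref{propnatgdc}, and checking that choosing the all-colored marking on $\Sigma$ is compatible with $\Mhat(s)$ sending the glued boundaries to a colored cut. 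Everything else (symmetric monoidality, the cylinder identification) is routine given the results already established, and I would also remark, following Remark~\ref{anomalouscase} and Remark~\ref{remframinganomaly}, that passing from $\cat{C}\text{-}\Surf$ to the central extension $\cat{C}\text{-}\Surfc$ in the anomalous case changes nothing in these arguments beyond the prescribed scalar action of the central generator.
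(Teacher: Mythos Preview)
Your proposal is correct and follows essentially the same route as the paper: reduce to marked blocks via the equivalences of Corollary~\ref{corblockmfc}/Proposition~\ref{propcomputeleftkan}, invoke Proposition~\ref{propexcision} for excision with marking, and use the commuting square coming from the naturality of these equivalences under the sewing transformations $\xi_s$ of Proposition~\ref{propnatgdc} --- which is exactly the obstacle you flag. The paper handles symmetric monoidality slightly more directly, by observing $\Pi/(\Sigma\sqcup\Sigma',\underline{X}\sqcup\underline{X'})\cong\Pi/(\Sigma,\underline{X})\times\Pi/(\Sigma',\underline{X'})$ rather than appealing to a general principle about Kan extensions, and it does not spell out the composition-on-cylinders check, but these are cosmetic differences.
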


\begin{proof} Through the construction leading to Definition~\ref{defdmfconstruc}, we have established that
	$\mfc$ is a functor  $\cat{C}\text{-}\Surf\to\Ch$.
	Moreover, for labeled extended surfaces $(\Sigma,\underline{X})$ and $(\Sigma',\underline{X'})$, we have
	$\Pi / (  \Sigma\sqcup \Sigma',\underline{X}\sqcup\underline{X'}    )\cong \Pi/(\Sigma,\underline{X})\times\Pi/(\Sigma',\underline{X'})$. 
	Since  $\mfcm :\CMhat \to \Ch$ is symmetric monoidal, we can now conclude that 
 $\mfc$ is symmetric monoidal (depending on how we model the homotopy colimits, the structure maps will just be equivalences).

	From Corollary~\ref{corblockmfc} we may conclude directly 
	 that the cylinder category of $\mfc$ is equivalent to $\Proj \cat{C}$ because marked blocks on decorated cylinders are given by the morphism spaces of $\cat{C}$ by \eqref{eqndmfoncyl}.
	
	It remains to prove  that \eqref{eqnthedmf} satisfies excision: 
	Let 
	$s : (\Sigma,(\underline{X},P,P)) \to (\Sigma',\underline{X})$ be a sewing morphism in $\cat{C}\text{-}\Surf$ that glues an ingoing to an outgoing boundary component which are both labeled with $P$. Any  fixed  marking $\Gamma$ on $\Sigma$ induces a marking $\Gamma'$ on $\Sigma'$.
	Now the naturality of the maps from Corollary~\ref{corblockmfc} gives us the commuting square (where they induce the vertical equivalences)
	\begin{equation}
	\begin{tikzcd}
	\lint^{P\in\Proj \cat{C}}   \block^{\Sigma,\Gamma}_\cat{C}(\underline{X},P,P)    \ar[swap]{dd}{  \simeq   } \ar[]{rrrrrr}{
\substack{\text{equivalence from Proposition~\ref{propexcision}} \\   \text{for excision with marking}   }     }	
 &&&& &&  \block^{\Sigma', \Gamma'}_\cat{C}(\underline{X})   \ar{dd}{\simeq}   \\
	& & \\
	\lint^{P\in\Proj \cat{C}}     \mfc(\Sigma,(\underline{X},P,P)) \ar{rrrrrr}{ \text{induced by evaluation of $\mfc$ on $s$} }  &&& &&&  \mfc(\Sigma,\underline{X})  \ . 
	\end{tikzcd} 
	\end{equation}
	The square commutes because the sewing transformations $\xi_s$ from the proof of Proposition~\ref{propnatgdc} generalize the sewing maps from Proposition~\ref{propexcision}.
	It follows that the lower horizontal map is an equivalence which proves excision.	
\end{proof}

\begin{remark}[The anomalous case]\label{anomalouscase}
	Theorem~\ref{thmalmostmain} provides --- at least in the anomaly-free case --- the modular functor needed for the   Main~Theorem~\ref{thmmain},
	and Corollary~\ref{corblockmfc} gives us the concrete 
	prescription how to compute it in terms of a marking. 
The restriction to anomaly-free modular categories
	throughout Section~\ref{secconstruc} was made for presentation purposes because the modifications needed to deal with anomalous case are analogous to the ones needed for vector space valued modular functors:
	Let us recall that for  marked blocks with values in vector spaces, the construction  in the anomalous case proceeds  precisely as in the anomaly-free case, but with the groupoid $\M(\Sigma)$ of markings on an extended surface $\Sigma$ replaced with a groupoid $\M^\C(\Sigma)$ which contains  additional central generators \cite[Section~3.2]{jfcs} and is no longer contractible. In fact, for a connected surface $\Sigma$, we have $\M^\C(\Sigma)\simeq \star // \mathbb{Z}$ by a non-canonical equivalence.
	The groupoid $\M^\C(\Sigma)$ comes with a functor $\M^\C(\Sigma)\to\M(\Sigma)$ sending the central generators to  identities. 
	We may see $\M^\C(\Sigma)$ as a central extension of $\M(\Sigma)$. 
	Now the vector space valued marked blocks will be defined on $\M^\C(\Sigma)$ instead of $\M(\Sigma)$. The central generators  will be sent to a scalar multiple of this identity. This scalar is given by $\zeta^g$, where $\zeta\in k^\times$ is the framing anomaly (Remark~\ref{remframinganomaly}) and $g$ is the genus of $\Sigma$. This allows us to interpret a functor out of $\M^\C(\Sigma)$ as a (certain type of) projective functor out of $\M(\Sigma)$. 
	
	In the differential graded setting, it is straightforward to take these central extensions into account as well: The central extension
	$\M^\C(\Sigma)\to\M(\Sigma)$ induces a central extension $\Mhat^\C(\Sigma)\to \Mhat(\Sigma)$
	of the category $\Mhat(\Sigma)$ of colored markings. As in Theorem~\ref{thmfunctormhat}, we will obtain
	for any projective boundary label $\underline{X}$ of $\Sigma$ a functor
	\begin{align}
	\mfcm( \Sigma,\underline{X},-)  : \Mhat^\C(\Sigma) \to \Ch  \label{eqnfunctorsonmarkingsext}
	\end{align}
	that sends all morphisms to equivalences.
	By the same arguments as for Proposition~\ref{propnatgdc}, it induces a symmetric monoidal functor $\CMhat^\C \to \Ch$ on the Grothendieck construction. Via homotopy left Kan extension along $\CMhat^\C \to \CMC$ induced by the functors $\Mhat^\C(\Sigma)\to\M^\C(\Sigma)$, 
	we obtain a symmetric monoidal functor $\CMC \to \Ch$. 	But since $\M(\Sigma)\simeq \star$ for every extended surface, 
	we have $\CM\simeq \cat{C}\text{-}\Surf$. Similarly, \begin{align}\label{eqnmodelcentralextension} \CMC \simeq \cat{C}\text{-}\Surfc\end{align} (this can be interpreted in the sense that $\CMC$ provides a model for the central extension $\cat{C}\text{-}\Surfc$; in fact, we could just \emph{define} $\cat{C}\text{-}\Surfc$ as $\CMC$). This allows us to see the functor $\CMC \to \Ch$ as a functor $\cat{C}\text{-}\Surfc \to \Ch$
	--- and this will give us the modular functor in the anomalous case.
	It is however not clear that its values are actually equivalent to the marked blocks. This, however, can be seen with arguments analogous to those in the proof of Proposition~\ref{propcomputeleftkan}.
	This completes the proof of the Main~Theorem~\ref{thmmain} in the general case.
\end{remark}

\begin{remark}[Relation to Lyubashenko's mapping class group representations and to the Reshetikhin-Turaev construction]\label{rellyu} Let $\cat{C}$ be a modular category.
	By construction, the zeroth homology $H_0 \mfc$ of the modular functor $\mfc:\cat{C}\text{-}\Surfc\to\Ch$ is a modular functor with values in vector spaces.
	This modular functor is (up to some technical subtleties that we will explain now) built from Lyubashenko's mapping class group representations \cite{lubacmp,luba,lubalex}: 
For an extended surface $\Sigma$ with projective boundary label $\underline{X}$, we have 
a canonical isomorphism
\begin{align}
H_0 \block_\cat{C}^{\Sigma,\Gamma}(\underline{X}) \cong \left(    \ordblock_\cat{C}^{\Sigma,\Gamma}(\underline{X}^\vee)       \right)^* \ , \label{eqnH0iso}
\end{align}
where $\block_\cat{C}^{\Sigma,\Gamma}$ denotes marked blocks (Section~\ref{derivedconformalblockssec})
and $\ordblock_\cat{C}^{\Sigma,\Gamma}$ vector space valued marked blocks (Section~\ref{secaugmentationfibration}). 
For the specific marking in Example~\ref{exblock}, this follows from \eqref{eqnexblock2} and the self-duality $\mathbb{F}^\vee\cong\mathbb{F}$ (that we also used in Remark~\ref{remsvea}), and  the general case can be played back to this special case with arguments similar to those in the proof of Proposition~\ref{proptrivfibbdy}.
If $\Sigma$ has at least one boundary component per connected component, the isomorphism \eqref{eqnH0iso} is compatible with the isomorphism $H_0 \block_\cat{C}^{\Sigma,\Gamma}(\underline{X}) \cong \ordblock_\cat{C}^{\Sigma,\Gamma}(\underline{X})$ induced by the augmentation fibration \eqref{eqnaugmentionfibrationeqn} in the sense that the triangle of isomorphisms
\begin{equation}
\begin{tikzcd}
H_0 \block_\cat{C}^{\Sigma,\Gamma}(\underline{X})  \ar{rrd}{\text{\eqref{eqnH0iso}}} \ar[swap]{dd}{  \substack{   \text{induced by}\\   \text{augmentation fibration}}}  & 
& &      \\
& & \left(    \ordblock_\cat{C}^{\Sigma,\Gamma}(\underline{X}^\vee)       \right)^* \\
\ordblock_\cat{C}^{\Sigma,\Gamma}(\underline{X}) \ar[swap]{rru}{  \text{\eqref{eqndualhom} and $\mathbb{F}^\vee\cong\mathbb{F}$}    }  & \phantom{x} & 
\end{tikzcd} 
\end{equation}
commutes.
The left hand side of \eqref{eqnH0iso} is functorial in $\Mhat^\C(\Sigma)$, but will descend to $\M^\C(\Sigma)$ such that \eqref{eqnH0iso} is a natural isomorphism of functors defined on $\M^\C(\Sigma)$.
Therefore, \eqref{eqnH0iso} will induce an isomorphism of mapping class group representations. 
Since the mapping class group representations in \cite{jfcs} are equivalent to  Lyubashenko's mapping class group representations, we conclude that in zeroth homology, $H_0\mfc$ recovers Lyubashenko's mapping class group representations.

The comparison just discussed automatically  implies the comparison to the Reshetikhin-Turaev modular functor stated in the introduction for the case the $\cat{C}$ is semisimple: In the semisimple case, $H_0 \mfc$ agrees with the dual of the Reshetikhin-Turaev modular functor (because the Lyubashenko construction generalizes the latter), and, in fact, the homology of $\mfc$ is concentrated in degree zero because in the semisimple case the statement from Proposition~\ref{proptrivfibbdy} will hold also for closed surfaces. This follows since in the semisimple case \emph{all} objects in $\cat{C}$ are projective.
\end{remark}

\small 
\spaceplease

\end{document}